\documentclass[a4paper,12pt]{article}
\usepackage[utf8]{inputenc}
\usepackage[UKenglish]{isodate}
\cleanlookdateon
\usepackage{amssymb}
\usepackage{amsmath}
\usepackage{amsthm}
\usepackage{enumitem}
\usepackage[T1]{fontenc}
\usepackage{xcolor}
\usepackage[hyphens]{url} 
\usepackage[linktoc = all, hidelinks, colorlinks, unicode=true]{hyperref} 
\usepackage[capitalise, compress, nameinlink, noabbrev]{cleveref} 
\usepackage{esvect}
\usepackage[sort,compress,numbers]{natbib}
\newcommand{\tabitem}{~~\llap{\textbullet}~~}
\usepackage{tcolorbox}

\hypersetup{
  colorlinks   = true,
  urlcolor     = blue!50!black,
  linkcolor    = blue!50!black,
  citecolor   = blue!50!black
}

\title{\texorpdfstring{\vspace{-4ex}}{}Invertibility of digraphs and tournaments}
\author{Noga Alon\protect\footnote{Department of Mathematics, Princeton University, Princeton, NJ, USA.
Research supported in part by NSF grant DMS-2154082
and BSF grant 2018267.} \quad
Emil Powierski\protect\footnote{Mathematical Institute, University of Oxford,
Oxford OX2 6GG, UK.} \quad
Michael Savery\protect\footnotemark[2]~\footnote{Heilbronn Institute for Mathematical Research, Bristol, UK.} \\
Alex Scott\protect\footnotemark[2]~\footnote{Research supported by EPSRC grant EP/X013642/1.} \quad
Elizabeth Wilmer\protect\footnote{Department of Mathematics, Oberlin College, Oberlin, OH, 44074, USA.}}
\date{15 September 2023}
\newcommand\blfootnote[1]{
  \begingroup
  \renewcommand\thefootnote{}\footnote{#1}
  \addtocounter{footnote}{-1}
  \endgroup
}

\newcommand{\floor}[1]{\left \lfloor{ #1 }\right \rfloor}
\newcommand{\ceil}[1]{\left \lceil{ #1 }\right \rceil}
\newcommand{\abs}[1]{\lvert#1\rvert}
\newcommand{\mbf}[1]{\mathbf{ #1 }}
\newcommand{\inv}{\operatorname{inv}}
\newcommand{\rk}{\operatorname{rank}}
\newcommand{\ra}{\rightarrow}
\newcommand{\N}{\mathbb{N}}

\newcommand{\F}{\mathbb{F}}
\newcommand{\cU}{\mathcal{U}}
\newcommand{\cB}{\mathcal{B}}

\newcommand{\hD}{\widehat{D}}
\newcommand{\bP}{\mathbb{P}}
\newcommand{\cM}{\mathcal{M}}
\newcommand{\cW}{\mathcal{W}}

\setlength{\parskip}{0.25em}

\newtheorem{theorem}{Theorem}
\newtheorem{lemma}[theorem]{Lemma}
\newtheorem{corollary}[theorem]{Corollary}
\newtheorem{prop}[theorem]{Proposition}
\newtheorem{claim}{Claim}
\newtheorem{observation}{Observation}
\newtheorem{conjecture}{Conjecture}
\newtheorem{question}[conjecture]{Question}
\newtheorem{problem}[conjecture]{Problem}
\theoremstyle{definition}

\newtheorem{definition}{Definition}

\textheight=650pt\topmargin=0pt\headsep=0pt
\advance\textwidth2\evensidemargin
\evensidemargin=0pt
\oddsidemargin=0pt

\begin{document}

\maketitle

\begin{abstract}
\noindent For an oriented graph $D$ and a set $X\subseteq V(D)$, the \emph{inversion of $X$ in $D$} is the digraph obtained by reversing the orientations of the edges of $D$ with both endpoints in $X$. The \emph{inversion number of $D$}, $\inv(D)$, is the minimum number of inversions which can be applied in turn to $D$ to produce an acyclic digraph. Answering a recent question of Bang-Jensen, da Silva, and Havet we show that, for each $k\in\N$ and tournament $T$, the problem of deciding whether $\inv(T)\leq k$ is solvable in time $O_k(\abs{V(T)}^2)$, which is tight for all $k$. In particular, the problem is fixed-parameter tractable when parameterised by $k$. On the other hand, we build on their work to prove their conjecture that for $k\geq 1$ the problem of deciding whether a general oriented graph $D$ has $\inv(D)\leq k$ is NP-complete. We also construct oriented graphs with inversion number equal to twice their cycle transversal number, confirming another conjecture of Bang-Jensen, da Silva, and Havet, and we provide a counterexample to their conjecture concerning the inversion number of so-called `dijoin' digraphs while proving that it holds in certain cases.  Finally, we asymptotically solve the natural extremal question in this setting, improving on previous bounds of Belkhechine, Bouaziz, Boudabbous, and Pouzet to show that the maximum inversion number of an $n$-vertex tournament is $(1+o(1))n$.\blfootnote{Email: \textsf{\href{mailto:nalon@math.princeton.edu}{nalon@math.princeton.edu}, \{\href{mailto:powierski@maths.ox.ac.uk}{powierski},\href{mailto:savery@maths.ox.ac.uk}{savery},\href{mailto:scott@maths.ox.ac.uk}{scott}\}@maths.ox.ac.uk, \href{mailto:ewilmer@oberlin.edu}{ewilmer@oberlin.edu}}.}
\end{abstract}

\section{Introduction}\label{sec:intro}

In this paper we only consider digraphs without loops, digons, or parallel edges, for which we use the terms \emph{digraph} and \emph{oriented graph} interchangeably. For such a digraph $D=(V,E)$ and a set $X\subseteq V$, the \emph{inversion of $X$ in $D$} is the digraph obtained from $D$ by reversing the direction of the edges with both endpoints in $X$; we say that we \emph{invert $X$ in $D$}. Given a family of sets $X_1,\dots,X_k\subseteq V$, we can invert $X_1$ in $D$, then $X_2$ in the resulting digraph, and so on, noting that the final digraph produced by these inversions is independent of the order in which we perform them. If inverting $X_1,\dots,X_k$ in turn transforms $D$ into an acyclic digraph, then we say that these sets form a \emph{decycling family} of $D$. We will refer to a set $X\subseteq V$ which forms a decycling family by itself as a \emph{decycling set}. The \emph{inversion number of $D$}, denoted $\inv(D)$, is defined to be the minimum size of a decycling family of $D$, and for $k\in\N_0$ we say that~$D$ is \emph{$k$-invertible} if $\inv(D)\leq k$.

The study of inversions began in Houmem Belkhechine's PhD thesis~\cite{belkhechine:thesis} and continued in~\cite{BELKHECHINE_unpublished, BELKHECHINE2010703, pouzet}, in which many foundational results were established. The present work is inspired by a recent paper of Bang-Jensen, da Silva, and Havet~\cite{bangjensen2022inversion} which studied a wide range of questions about invertibility, with an emphasis on those of an algorithmic or extremal nature. They also posed a host of interesting conjectures and problems, some of which we answer in this paper.

\subsection{The inversion number of \texorpdfstring{$k$-joins}{k-joins}}\label{subsec:intro_kjoins}

The cornerstone of many of the conjectures made by Bang-Jensen, da Silva, and Havet in~\cite{bangjensen2022inversion} is the following `dijoin conjecture'. For oriented graphs $L$ and $R$, the \emph{dijoin $L\ra R$ from $L$ to $R$} is the oriented graph consisting of vertex-disjoint copies of $L$ and $R$, with an edge $\vv{uv}$ for all $u\in V(L)$ and $v\in V(R)$. 
\begin{conjecture}[\cite{bangjensen2022inversion}]\label{conj:dijoin}
For oriented graphs $L$ and $R$ we have $\inv(L\ra R)=\inv(L)+\inv(R)$.
\end{conjecture}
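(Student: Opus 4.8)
The plan is to prove the two inequalities separately, with the upper bound being routine and the lower bound carrying all the difficulty. For $\inv(L\ra R)\le\inv(L)+\inv(R)$, I would take optimal decycling families $X_1,\dots,X_p$ of $L$ and $Y_1,\dots,Y_q$ of $R$ and regard all of them as subsets of $V(L\ra R)=V(L)\sqcup V(R)$. Since each $X_i\subseteq V(L)$ and each $Y_j\subseteq V(R)$, inverting them reverses only edges internal to $L$ or internal to $R$ and never touches a cross edge from $L$ to $R$. Hence after all $p+q$ inversions the copy of $L$ is acyclic, the copy of $R$ is acyclic, and every cross edge still points from $L$ to $R$; placing all of $V(L)$ before all of $V(R)$ and respecting the internal acyclic orders exhibits a topological ordering, so $L\ra R$ is decycled by $p+q$ sets.

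The substance is the reverse inequality. The natural framework is to encode a decycling family $X_1,\dots,X_k$ by the map $v\mapsto x_v\in\F_2^k$, where the $i$th coordinate of $x_v$ records whether $v\in X_i$. The net effect of the family is then governed by the symmetric bilinear form over $\F_2$: an edge $uv$ is reversed precisely when $\langle x_u,x_v\rangle=1$, and $D$ is $k$-invertible exactly when such vectors exist for which the resulting digraph is acyclic. Two observations drive the argument. First, restricting the vectors to $V(L)$ gives a decycling family of $L$ and restricting to $V(R)$ gives one of $R$, so $\inv(L),\inv(R)\le k$ already; the task is to upgrade these to an additive bound. Second, in the \emph{original} digraph every directed cycle lies wholly inside $L$ or wholly inside $R$, since no edge returns from $R$ to $L$.

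I would then try to reduce to the ``separated'' situation in which no cross edge is ever reversed, that is, $\langle x_u,x_v\rangle=0$ for all $u\in V(L)$ and $v\in V(R)$. In that case the spans $U=\langle x_v:v\in V(L)\rangle$ and $W=\langle x_v:v\in V(R)\rangle$ are mutually orthogonal, and I would hope that re-coordinatising the vectors of each part inside its own span produces a decycling family of that part of size equal to its dimension, giving $\dim U\ge\inv(L)$ and $\dim W\ge\inv(R)$, and hence $k\ge\dim U+\dim W\ge\inv(L)+\inv(R)$.

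The main obstacle — and the step I least expect to go through unconditionally — is precisely this separation-plus-dimension argument. Reversing cross edges through ``mixed'' inversion sets (those meeting both $V(L)$ and $V(R)$) can create directed cycles that traverse both parts, so there is no reason an optimal family should avoid such sets, and controlling or eliminating them is delicate. Worse, even in the separated case the desired conclusion $k\ge\dim U+\dim W$ can fail over $\F_2$, because orthogonal subspaces may share isotropic vectors and so need not be independent; relatedly, re-coordinatising does not preserve the standard form, so recovering a small family from a low-dimensional span is itself nontrivial. I therefore expect the clean additive identity to hold only under extra hypotheses (for instance when $\inv(R)=0$, where it is immediate, or more generally when the bilinear-form interference between the two sides can be excluded), and I would concentrate the real effort on isolating exactly those structural conditions — while remaining alert to the genuine possibility that this interference can be exploited to force $\inv(L\ra R)$ strictly below $\inv(L)+\inv(R)$ in general.
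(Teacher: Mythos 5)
You could not have completed this proof, because the statement is false, and the paper you are being compared against \emph{disproves} it: this is \cref{conj:dijoin}, the dijoin conjecture of Bang-Jensen, da Silva, and Havet, and \cref{thm:dijoin_ce} exhibits a tournament $R$ with $\inv(R)=\inv(\vv{C_3}\ra R)=3$, so that $\inv(L\ra R)=3<4=\inv(L)+\inv(R)$ with $L=\vv{C_3}$. Your upper-bound argument ($\inv(L\ra R)\le\inv(L)+\inv(R)$ by keeping the inversion sets inside the two parts) is correct and is the same routine observation the paper makes. More importantly, your diagnosis of where the lower bound breaks is exactly the mechanism of the paper's counterexample: the obstruction you call ``bilinear-form interference'' from inversion sets meeting both $V(L)$ and $V(R)$ is not merely an obstacle to the proof, it can genuinely be exploited to save an inversion.

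Concretely, the paper takes disjoint sets $A,B,C\subseteq V(R)$ such that $A\cup B$, $A\cup C$, $B\cup C$ form a decycling family of a suitable $9$-vertex tournament $R$ with $\inv(R)=3$, and then adds two vertices $u,v$ of the $\vv{C_3}$ to all three sets. In your $\F_2$ language: $u$ and $v$ get characteristic vector $(1,1,1)$, which satisfies $(1,1,1)\cdot(1,1,1)=1$, so the edge $uv$ reverses and the triangle is decycled; yet $(1,1,1)$ is orthogonal to the vectors $(1,1,0)$, $(1,0,1)$, $(0,1,1)$ of $A$, $B$, $C$ and to $\mbf{0}$, so no cross edge moves and $R$ is decycled exactly as before. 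This is precisely the self-orthogonality/isotropy phenomenon over $\F_2$ that you flagged as the reason $k\ge\dim U+\dim W$ can fail. So your proposal is not a proof, but its concluding suspicion is vindicated; had you pushed the interference idea to a construction rather than treating it as a case to be excluded, you would have arrived at the paper's refutation. (The paper does salvage additivity under extra hypotheses, e.g.\ when $\inv(L)=\inv(R)=2$ and for certain $k$-joins, which matches your guess that the identity survives only under structural conditions ruling out this interference.)
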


Noting that the left-hand side is certainly at most the right-hand side for all $L$ and~$R$ and that the conjecture holds trivially if $\inv(L)=0$ or $\inv(R)=0$, Bang-Jensen, da Silva, and Havet showed it to be true when $\inv(L)+\inv(R)\leq 3$, and when $\inv(L)=\inv(R)=2$ and $L$ and $R$ are both strongly connected.\footnote{The case where $\inv(L)=2$ and $\inv(R)=1$ is not explicitly mentioned in~\cite{bangjensen2022inversion}, but follows easily from the case where $\inv(L)=1$ and $\inv(R)=2$ by inverting $V(L\ra R)$.} They also observed (see our \cref{sec:dijoin}) that the conjecture is equivalent to its restriction to tournaments. We disprove \cref{conj:dijoin} by exhibiting a tournament $R$ with $\inv(R)=\inv(\vv{C_3}\ra R)$, where $\vv{C_3}$ is the directed cycle on three vertices.

\begin{theorem}\label{thm:dijoin_ce}
There exists a tournament $R$ with $\inv(R)=\inv(\vv{C_3}\ra R)=3$.
\end{theorem}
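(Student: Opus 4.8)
The plan is to work throughout with the standard $\F_2$-linear reformulation of inversions. A family $X_1,\dots,X_k$ of subsets of $V(T)$ assigns to each vertex $v$ the vector $x_v\in\F_2^k$ whose $i$th coordinate records whether $v\in X_i$, and inverting all of the $X_i$ reverses the edge between $u$ and $v$ exactly when $\langle x_u,x_v\rangle=1$, the inner product being taken over $\F_2$. Thus $\inv(T)\le k$ if and only if there is an assignment $v\mapsto x_v\in\F_2^k$ for which the resulting tournament is acyclic. Since $R$ is an induced subdigraph of $\vv{C_3}\ra R$, monotonicity of $\inv$ gives $\inv(\vv{C_3}\ra R)\ge\inv(R)$, and as $\vv{C_3}\ra R$ contains no cycle using an edge between the two parts we always have $\inv(\vv{C_3}\ra R)\le\inv(\vv{C_3})+\inv(R)=1+\inv(R)$. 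Hence it suffices to produce a single tournament $R$ with $\inv(R)=3$ that nonetheless admits a decycling family of size $3$ for $\vv{C_3}\ra R$; the theorem then follows with all three quantities equal to $3$.

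For the upper bound on $\inv(\vv{C_3}\ra R)$ I would start from three inversions decycling $R$, recorded as an assignment $\phi_R\colon V(R)\to\F_2^3$ whose image spans $\F_2^3$ and under which $R$ becomes transitive with vertex order $\sigma=(r_1,\dots,r_m)$. It then remains to assign vectors $\psi_1,\psi_2,\psi_3\in\F_2^3$ to the three cycle vertices $u_1,u_2,u_3$ and to verify that the whole digraph becomes acyclic. All edges between the triangle and $R$ originally point into $R$, so a directed cycle can survive only through an edge reversed to point from some $r$ back to some $u_i$; a short case analysis shows that, once $R$ and the triangle are individually acyclic, the combined digraph is acyclic precisely when each half-space $H_i=\{r:\langle\psi_i,\phi_R(r)\rangle=1\}$ is an initial segment of $\sigma$ and the induced nesting of $H_1,H_2,H_3$ agrees with the transitive order the triangle acquires. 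Breaking the triangle forces the flip pattern $(\langle\psi_1,\psi_2\rangle,\langle\psi_2,\psi_3\rangle,\langle\psi_1,\psi_3\rangle)$ to have weight $1$ or $2$, and one checks that there are nonzero vectors (for instance $\psi_1=(1,0,0)$, $\psi_2=(1,1,1)$, $\psi_3=(0,1,0)$) realising a pattern compatible with a strictly nested chain $H_3\subset H_2\subset H_1$. The task of the construction is then to guarantee that the vectors $\phi_R(r)$ can be listed along $\sigma$ so that these half-spaces really are nested initial segments.

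The heart of the argument, and the step I expect to be the main obstacle, is the lower bound $\inv(R)\ge 3$, that is, ruling out every decycling family of size $2$ while simultaneously retaining the nesting property above. Here each vertex of $R$ receives one of the four vectors of $\F_2^2$, each colour class must induce a transitive sub-tournament, and the edges within and between classes are reversed according to the fixed pattern of inner products of $00,01,10,11$; one must show that no such $4$-colouring makes $R$ acyclic. A naive blow-up of the four-vector pattern used for the upper bound has inversion number well below $3$, so $R$ must carry extra structure defeating all two-dimensional assignments while keeping the one-dimensional chain of half-spaces intact. I would therefore exhibit an explicit $R$, guided by the $\F_2$ picture, that embeds enough independent cyclic obstructions to force $\inv(R)>2$, and verify this directly.

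Finally, with such an $R$ in hand the two families are produced explicitly: the three sets witnessing $\inv(R)\le3$, together with the lower bound, give $\inv(R)=3$, and adjoining the three cycle vertices to these sets according to $\psi_1,\psi_2,\psi_3$ yields a decycling family of $\vv{C_3}\ra R$ of size $3$. Combined with $\inv(\vv{C_3}\ra R)\ge\inv(R)=3$ this gives $\inv(\vv{C_3}\ra R)=3$, completing the proof.
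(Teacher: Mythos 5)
Your framework and overall strategy are sound: the $\F_2$ reformulation is exactly the paper's \cref{obs:atoms}, and reducing the theorem to ``find $R$ with $\inv(R)=3$ admitting a size-$3$ decycling family of $\vv{C_3}\ra R$'' is correct. But the proposal never produces such an $R$. The entire content of the theorem is the existence of a specific tournament, and at the decisive moment you write only that you ``would exhibit an explicit $R$, guided by the $\F_2$ picture, \dots and verify this directly.'' That sentence \emph{is} the theorem, not a step of its proof. The tension you correctly identify --- that any structure imposed on $R$ to make the dijoin decyclable by $3$ inversions tends to push $\inv(R)$ down to $2$ --- is precisely what an explicit example must overcome, and you give no candidate, no verification, and no argument that one exists. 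So this is a genuine gap: what you have is a programme whose one essential, nontrivial ingredient is missing.

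For comparison, here is how the paper closes that gap, and how it also avoids your more delicate upper-bound analysis. It takes $R$ on vertex set $[9]$ with disjoint pairs $A=\{1,3\}$, $B=\{4,6\}$, $C=\{7,9\}$, where the edge $ij$ ($i<j$) is reversed exactly when $i$ and $j$ lie in two \emph{distinct} pairs among $A,B,C$; then $A\cup B$, $A\cup C$, $B\cup C$ is a decycling family of $R$, and the lower bound $\inv(R)\geq 3$ is established by a computer check (the paper gives no human-readable proof of it either, which is consistent with your expectation that this is the hard step). The dijoin upper bound is then much simpler than your nested-half-space argument: add two triangle vertices $u,v$ to all three sets. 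Their characteristic vector is $(1,1,1)$, while every vertex of $R$ has a characteristic vector of even weight ($(1,1,0)$, $(1,0,1)$, $(0,1,1)$, or $\mbf{0}$), hence orthogonal to $(1,1,1)$. So no cross edge flips, the edge between $u$ and $v$ flips (breaking the triangle), and both sides become transitive, so acyclicity is immediate --- your initial-segment and nesting conditions are never needed, because the construction arranges that all your half-spaces $H_i$ are empty. If you want to complete your own route, you must either find (and verify, presumably by computer) a concrete $R$ with $\inv(R)=3$ compatible with your nesting conditions, or adopt the paper's pairwise-union structure, which reduces the search to tournaments of this special form.
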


While \cref{thm:dijoin_ce} shows that the dijoin conjecture is false in general, we prove it in the case where $\inv(L)=\inv(R)=2$.

\begin{theorem}\label{thm:2ra2}
If $L$ and $R$ are digraphs with $\inv(L)=\inv(R)=2$, then $\inv(L\ra R)=4$.
\end{theorem}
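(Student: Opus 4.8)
The plan is to use the standard linear-algebraic encoding of decycling families: a decycling family of size at most $3$ of $L\ra R$ is equivalent to a labelling $\chi\colon V(L\ra R)\to\F_2^3$ such that the digraph $\hD$ obtained by reversing each edge $\{x,y\}$ with $\langle\chi(x),\chi(y)\rangle=1$ is acyclic, where $\langle\,\cdot\,,\cdot\,\rangle$ is the standard bilinear form. Since $\inv(L\ra R)\le\inv(L)+\inv(R)=4$ holds automatically, it suffices to rule out such a $\chi$. First I would fix a topological order $\prec$ of $\hD$ and record the key correspondence on cross edges: for $u\in V(L)$ and $v\in V(R)$ the original edge $\vv{uv}$ survives as $\vv{uv}$ exactly when $u\prec v$, so that $\langle\chi(u),\chi(v)\rangle=0\iff u\prec v$.

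Writing $W_L$ and $W_R$ for the spans in $\F_2^3$ of the labels of $V(L)$ and $V(R)$ respectively, the next step is to observe two restrictions coming from acyclicity of the induced subdigraphs $\hD[V(L)]$ and $\hD[V(R)]$. Because $\chi$ restricted to $V(L)$ is itself a decycling labelling of $L$, if the labels of $V(L)$ lay in a subspace of dimension at most $1$ then a single inversion would decycle $L$, forcing $\inv(L)\le1$; hence $\dim W_L\ge2$, and symmetrically $\dim W_R\ge2$. I would then dispose of the case $W_L\perp W_R$ at once: here every cross edge satisfies $\langle\chi(u),\chi(v)\rangle=0$, so $W_R\subseteq W_L^\perp$, and since the standard form on $\F_2^3$ is nondegenerate this gives $\dim W_R\le 3-\dim W_L\le1$, a contradiction. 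Thus some cross edge is reversed, and the two spans both have dimension $2$ or $3$.

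The heart of the argument is this remaining case, in which cross edges point in both directions. Here I would extract a structural constraint by combining the order with linearity: for $v\prec v'$ in $V(R)$ the correspondence gives $\{u:\langle\chi(u),\chi(v')\rangle=1\}\subseteq\{u:\langle\chi(u),\chi(v)\rangle=1\}$, so as $v$ ascends $\prec$ the back-neighbourhoods in $V(L)$ form a nested chain; each such set is simultaneously a final segment of $(V(L),\prec)$ and the $1$-set on $\chi(V(L))$ of the linear functional $\langle\,\cdot\,,\chi(v)\rangle$, and symmetrically with the roles of $L$ and $R$ interchanged. Using that $\chi(V(L))$ and $\chi(V(R))$ each span a space of dimension at least $2$, these nesting conditions restrict which functionals, and hence which label classes, can occur on each side.

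The hard part will be turning these constraints into a contradiction, because the nesting condition alone is satisfiable (for instance when $\chi(V(L))$ is a basis of $\F_2^3$); it must be combined with the fact that $\inv(L)=\inv(R)=2$ guarantees cycles in $L$ and in $R$ that survive robustly under single inversions. The strategy I would pursue is to use a reversed cross edge $\vv{vu}$ (from $V(R)$ into $V(L)$) together with a kept cross edge $\vv{u'v'}$ to attempt to close a directed cycle $v\to u\rightsquigarrow u'\to v'\rightsquigarrow v$, routing the $L$-portion through $\hD[V(L)]$ and the $R$-portion through $\hD[V(R)]$; acyclicity of $\hD$ forbids every such closure, and I would argue by a case analysis on the label classes and segment structure that the cycles forced by $\inv(L)\ge2$ and $\inv(R)\ge2$ always make some closure possible. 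Verifying that this case analysis is exhaustive, in particular handling the vertices lying outside the interleaved range and the dimension profiles $(\dim W_L,\dim W_R)\in\{2,3\}^2$, is the main technical obstacle.
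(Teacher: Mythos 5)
Your framework and the preliminary deductions are sound: the correspondence between decycling families of size $3$ and labellings $\chi\colon V(L\ra R)\to\F_2^3$ is the standard characteristic-vector encoding underlying the paper's \cref{obs:atoms}, and your claims that $\dim W_L,\dim W_R\geq 2$, that some cross edge must be reversed, and that the back-neighbourhoods in $V(L)$ of the vertices of $V(R)$ form a nested chain are all correct. However, the proposal has a genuine gap exactly where you locate ``the main technical obstacle'': the derivation of a contradiction is never carried out, and that derivation is the entire content of the theorem. The constraints you list are far from sufficient on their own, so the unproved case analysis is not a routine verification but the heart of the matter.

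Moreover, the strategy you sketch for that missing step provably cannot cover all cases, because it requires both a reversed cross edge $\vv{vu}$ and a kept cross edge $\vv{u'v'}$. Consider the configuration in which \emph{every} cross edge is reversed (all of $V(R)$ precedes all of $V(L)$ in $\prec$). Since the differences of $L$-labels must then be orthogonal to $W_R$, one gets $\dim W_L=\dim W_R=2$ with at most two distinct labels on each side, so this configuration satisfies every constraint you derived (nesting holds trivially), yet there is no kept cross edge to route a cycle through, and acyclicity of $\hD$ splits into separate conditions on the two sides. For instance, labels $(1,0,0),(0,1,0)$ on $V(L)$ and $(1,1,0),(1,1,1)$ on $V(R)$ give all cross inner products equal to $1$; here the induced pattern on $V(L)$ is ``invert each class separately'' (consistent with $\inv(L)=2$), and the only way to reach a contradiction is to observe that the induced pattern on $V(R)$ amounts to a single inversion, contradicting $\inv(R)=2$ --- an argument of a completely different character from cycle closure. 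This reflects the real difficulty: converting ``$\inv(L)=2$'' into usable local structure is precisely what the paper's \cref{thm:kjoin_char} achieves for joins of $1$-invertible digraphs, and the paper's own proof of \cref{thm:2ra2} avoids a direct attack entirely, instead extending $L\ra R$ to a tournament, decomposing each side into strongly connected components, and invoking \cref{thm:inv_kjoin} together with the strongly connected case proved by Bang-Jensen, da Silva, and Havet. To complete your approach you would essentially have to reprove analogues of those ingredients inside your $\F_2^3$ framework.
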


The proof of \cref{thm:2ra2} relies on the strongly connected case and our next result, which concerns the following generalisation of dijoins to arbitrarily many digraphs. For $k\in \N$ the \emph{$k$-join} of digraphs $D_1,\dots,D_k$, written $[D_1, \dots, D_k]$, is the digraph consisting of vertex-disjoint copies of $D_1, \dots, D_k$ with an additional edge $\vv{uv}$ whenever $u \in V(D_i), v \in V(D_j)$ for $i<j$. We write $[D]_k=[D, \dots, D]$ for the $k$-join of $k$ copies of the same oriented graph~$D$. The following result can be viewed as a $k$-join analogue of the dijoin conjecture holding under certain conditions. It generalises a theorem of Pouzet, Kaddour, and Thatte~\cite{pouzet} which states that $\inv([\vv{C_3}]_k)=k$ for all $k$.

\begin{theorem}\label{thm:inv_kjoin}
Let $k\in\N$ and let $D_1,\dots,D_k$ be oriented graphs. Assume that\/ $\inv(D_i)\leq 2$ for all $i$, with equality for at most one $i$. Then
\begin{align}\label{eq:thm_nested}
\inv([D_1,\dots,D_k])=\sum_{i=1}^k\inv(D_i).
\end{align}
\end{theorem}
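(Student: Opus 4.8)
The plan is to prove the two inequalities in~\eqref{eq:thm_nested} separately. The upper bound $\inv([D_1,\dots,D_k])\le\sum_i\inv(D_i)$ is immediate: inverting, within each $V(D_i)$ separately, an optimal decycling family of $D_i$ leaves every edge between distinct parts oriented from the lower- to the higher-indexed part, so the resulting digraph is acyclic. For the reverse inequality I would work throughout with the standard reformulation of inversions over $\F_2$: a family of $m$ inversions of a digraph $D$ is the same as a map $\phi\colon V(D)\to\F_2^m$ under which the edge $uv$ is reversed exactly when $\langle\phi(u),\phi(v)\rangle=1$, so that $\inv(D)$ is the least $m$ for which some such $\phi$ renders $D$ acyclic.

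First I would record that $\inv$ is monotone under passing to induced subdigraphs, and use this to dispose of the acyclic parts. If some $\inv(D_{i_0})=0$, then the subjoin on the remaining parts is an induced subdigraph of $[D_1,\dots,D_k]$, so its inversion number, which by induction on the number of parts equals $\sum_{i\ne i_0}\inv(D_i)=\sum_i\inv(D_i)$, is a lower bound for $\inv([D_1,\dots,D_k])$; combined with the upper bound this settles the case. Hence it suffices to treat the situation in which every $D_i$ is non-acyclic, so that each $\inv(D_i)\in\{1,2\}$ with equality for at most one~$i$.

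In this remaining case I would induct by peeling off a single part using the dijoin decomposition $[D_1,\dots,D_k]=[D_1,\dots,D_{k-1}]\ra D_k$. Since $\inv(\overleftarrow D)=\inv(D)$ and the converse of a dijoin $L\ra R$ is $\overleftarrow R\ra\overleftarrow L$ (again a join of the same type), I may always arrange that the part peeled off has inversion number~$1$. Thus the heart of the matter is a dijoin statement: if $\inv(R)=1$ and $L$ is a join of the above type, then $\inv(L\ra R)\ge\inv(L)+1$. Fixing an optimal decycling $\phi$ of $L\ra R$ and a topological order $\prec$ of the resulting acyclic digraph, the key structural observation is that, because every vertex of $L$ is joined to every vertex of $R$, for each $w\in V(R)$ the map $v\mapsto\langle\phi(v),\phi(w)\rangle$ is monotone along $\prec$ restricted to $V(L)$: it is $0$ on an initial segment and $1$ on the complementary final segment. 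Since $R$ is non-acyclic some edge of it is reversed, yielding $w,w'\in V(R)$ with $\langle\phi(w),\phi(w')\rangle=1$. The idea is then to project the vectors $\{\phi(v):v\in V(L)\}$ onto a suitable hyperplane (or codimension-two subspace) determined by $\phi(w)$ and $\phi(w')$; the monotonicity ensures that the induced change to the reversal pattern on $L$ amounts to inverting a final segment of $\prec$ (or a similarly order-compatible set), which preserves acyclicity, so the projected map should decycle $L$ with one fewer inversion.

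The step I expect to be the main obstacle is making this projection honest. Landing the vectors in a subspace of dimension $m-1$ only yields a genuine decycling family of $L$ of that size when the subspace carries the \emph{standard} (non-alternating) form of $\F_2^{m-1}$; the dangerous configuration is when the sole admissible projection direction is the all-ones vector, whose orthogonal complement is the even-weight subspace and is alternating, so that no reduction in the number of inversions actually takes place. This degeneracy is not an artefact of the method: it is precisely the mechanism behind the failure of the dijoin conjecture in general (\cref{thm:dijoin_ce}), and indeed the displayed inequality can fail outright when $L$ is an arbitrary digraph of large inversion number. The crux of the proof is therefore to show that, under the hypothesis that every part has inversion number at most~$2$ with equality for at most one part, the join structure of $L$ together with $\inv(R)=1$ always furnishes an admissible projection direction avoiding the alternating obstruction; I expect this case analysis, threaded through the induction, to be the most delicate ingredient.
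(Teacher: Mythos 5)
Your upper bound, your reduction to the case where every factor is non-acyclic, and your $\F_2$-reformulation are all correct, as is the monotonicity observation: since every $v\in V(L)$ sends an edge to every $w\in V(R)$, the vertices of $L$ whose product with $\phi(w)$ is $0$ precede $w$ in the topological order and the others follow it. The induction that peels off a factor of inversion number $1$ (using converses to arrange this) is also structurally sound. The problem is that the statement everything is reduced to --- if $\inv(R)=1$ and $L$ is a join of the given type then $\inv(L\ra R)\geq\inv(L)+1$ --- is essentially the theorem itself, and your sketch does not prove it. Your projection does handle the favourable cases: if some reversed edge of $R$ has an endpoint $w$ with $a=\phi(w)$ satisfying $a\cdot a=1$ and $a\neq\mathbf{1}$, then $x\mapsto x+(x\cdot a)a$ lands in $a^\perp$ (which carries a non-alternating form precisely because $a \neq \mathbf{1}$), changes the reversal pattern on $L$ only by inverting a final segment of the order, and so gives $\inv(L)\leq m-1$; and when both endpoints are isotropic one can project off the hyperbolic plane they span and still win. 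Using $x\cdot x=x\cdot\mathbf{1}$, the irreducibly bad case is that \emph{every} reversed edge inside $R$ has both endpoints of characteristic vector $\mathbf{1}$, i.e.\ the trace of the inversions on $R$ is the inversion of the single atom with vector $\mathbf{1}$ (with $m$ odd). Your proposal stops exactly here, deferring the resolution to an unspecified ``case analysis, threaded through the induction''. This cannot be dismissed as a routine detail: as you note yourself, the inequality is false for general $L$ (that is \cref{thm:dijoin_ce} read backwards), so any proof of the bad case must exploit the join structure of $L$ across \emph{all} of its factors simultaneously, and nothing in the proposal does so.

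That missing step is where essentially all of the work in the paper lies. The paper does not argue one dijoin at a time; it proves the global characterisation \cref{thm:kjoin_char} (via \cref{lem:C3_kjoin}): any size-$k$ decycling family of a $k$-join of inversion-$1$ digraphs forces orthonormal vectors $\mbf{u}_1,\dots,\mbf{u}_k$ such that every vertex of the $i$th factor has characteristic vector $\mbf{0}$ or $\mbf{u}_i$, the latter on a decycling set of that factor. The proof is a reverse induction on a chain of subspaces, keyed to the $<$-minimal vertex among the remaining factors --- precisely the kind of cross-factor argument needed to exclude your all-ones degeneracy, in which distinct factors would have to share the direction $\mathbf{1}$ rather than receive orthogonal private directions. \Cref{thm:inv_kjoin} then follows by short cardinality arguments: $k$ orthonormal vectors are linearly independent, so they cannot all have last coordinate $0$ (the all-inversion-number-$1$ case), and for the factor with inversion number $2$ one passes to a spanning subdigraph of inversion number $1$ and observes that the forced structure would produce a decycling \emph{set} of that factor, a contradiction. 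In short: your plan sets up the right frame and correctly isolates the obstruction, but the crux --- why the join structure rules out the alternating/all-ones configuration --- is absent, and it is the mathematical heart of the theorem.
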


We will use \cref{thm:inv_kjoin} to confirm another conjecture from~\cite{bangjensen2022inversion} which was made based on the dijoin conjecture (see \cref{thm:NPcomplete} below). \Cref{thm:inv_kjoin} and, in turn, \cref{thm:2ra2} follow from a characterisation of the decycling families of size $k$ of arbitrary $k$-joins of oriented graphs with inversion number $1$. We will need some further terminology to state this result: for a digraph $D$, sets $X_1,\dots,X_k\subseteq V(D)$, and a vertex $v\in V(D)$, we define the \emph{characteristic vector} of $v$ in $X_1,\dots,X_k$ to be $(I_{\{v\in X_i\}}:i\in[k])\in\F_2^k$, where $I_{\{v\in X_i\}}$ is the indicator function of the event $v\in X_i$. For vectors $\mbf{u},\mbf{v}\in \F_2^k$ we write $\mbf{u}\cdot\mbf{v}$ for the usual scalar product of $\mbf{u}$ and $\mbf{v}$ over $\F_2$. This is not a genuine inner product, but we say nevertheless that a collection $\mbf{u}_1, \dots, \mbf{u}_\ell \in \F_2^k$ is \emph{orthonormal} if $\mbf{u}_i\cdot\mbf{u}_i=1$ for all $i$ and $\mbf{u}_i\cdot\mbf{u}_j=0$ for all $i\neq j$.
Finally, we refer to the canonical copy of $D_i$ in $D=[D_1, \dots, D_k]$ as the \emph{$i$th factor} of $D$.
We are now ready to state our characterisation theorem, the case $k=2$ of which was shown by Bang-Jensen, da Silva, and Havet~\cite{bangjensen2022inversion}. Its proof is based on an approach used by Pouzet, Kaddour, and Thatte~\cite{pouzet}.

\begin{theorem}\label{thm:kjoin_char}
Let $D_1,\dots,D_k$ be oriented graphs with $\inv(D_i)=1$ for all $i$ and let $\hD=[D_1, \dots, D_k]$ be their $k$-join. Then sets $X_1, \dots X_k\subseteq V(\hD)$ form a decycling family of $\hD$ if and only if there are orthonormal vectors
$\mbf{u}_1,\dots,\mbf{u}_k\in\F_2^k$ and for each $i$ a decycling set $V_i\subseteq V(D_i)$ of the $i$th factor of $\hD$ such that, for each $i$, the vertices in $V_i$ have characteristic vector $\mbf{u}_i$ \textnormal{(}in $X_1 \dots, X_k$\textnormal{)}, and all other vertices have characteristic vector $\mbf{0}$ \textnormal{(}in $X_1 \dots, X_k$\textnormal{)}.

In particular, any acyclic digraph obtained from $\hD$ by $k$ inversions can also be obtained by inverting a decycling set for each factor in turn.
\end{theorem}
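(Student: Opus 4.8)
The plan is to work throughout with the reformulation in which a decycling family $X_1,\dots,X_k$ is encoded by the assignment of characteristic vectors $\mbf{c}\colon V(\hD)\ra\F_2^k$, noting that inverting $X_1,\dots,X_k$ reverses the edge $uv$ precisely when the number of $i$ with $u,v\in X_i$ is odd, that is, when $\mbf{c}(u)\cdot\mbf{c}(v)=1$. Thus the family is decycling if and only if the digraph $\hD'$ obtained by reversing exactly these edges is acyclic, and conversely every assignment $\mbf{c}$ arises from some family via $X_i=\{v:c_i(v)=1\}$. For the ``if'' direction I would argue directly: given orthonormal $\mbf{u}_1,\dots,\mbf{u}_k$ and decycling sets $V_i$ with the stated characteristic vectors, orthonormality gives, within the $i$th factor, $\mbf{c}(u)\cdot\mbf{c}(v)=1$ exactly when $u,v\in V_i$ (using $\mbf{u}_i\cdot\mbf{u}_i=1$), so the induced reversals coincide with inverting $V_i$ and the factor becomes acyclic. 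For $u$ in the $i$th factor and $v$ in the $j$th with $i\neq j$, a reversed edge would force $\mbf{u}_i\cdot\mbf{u}_j=1$, which orthogonality forbids; hence no edge between distinct factors is reversed. Since each factor is acyclic and all edges between factors still point from the lower- to the higher-indexed factor, no directed cycle can cross between factors, so $\hD'$ is acyclic.

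For the ``only if'' direction, suppose $\mbf{c}$ decycles $\hD$. If some factor received only the zero vector it would be left unchanged and hence cyclic (as $\inv(D_i)=1$), so each factor carries a nonzero vector. The structural engine is the observation that between any two factors $i<j$ the reversed edges cannot realise an ``alternating'' pattern: there are no $u,u'$ in the $i$th factor and $v,v'$ in the $j$th with $\mbf{c}(u)\cdot\mbf{c}(v)=\mbf{c}(u')\cdot\mbf{c}(v')=1$ and $\mbf{c}(u)\cdot\mbf{c}(v')=\mbf{c}(u')\cdot\mbf{c}(v)=0$ (nor the version with $0$ and $1$ swapped), since such a pattern produces the directed $4$-cycle $u'\ra v\ra u\ra v'\ra u'$ in $\hD'$. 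Equivalently, the reversal matrix between the two factors has both its rows and its columns totally ordered by pointwise domination.

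The heart of the argument, and the step I expect to be the main obstacle, is to promote this cross-factor chain structure, together with the requirement that each factor itself be decycled, into full rigidity: within each factor all nonzero characteristic vectors coincide with a single $\mbf{u}_i$, and $\mbf{u}_i\cdot\mbf{u}_j=0$ for $i\neq j$. (Note that chain structure alone is not enough, since a consistent non-orthogonal pattern is still a chain, so the cycle one builds must reach across factors.) Here I would argue by contradiction, manufacturing a directed cycle in $\hD'$ whenever either condition fails: combining a reversed cross edge, which points backwards from a higher- to a lower-indexed factor, with forward cross edges routed through zero-vector vertices and with suitable paths inside the decycled factors. The delicate part is controlling reachability inside the factors, which I expect to require a careful choice of vertices and possibly an induction on $k$ or the recruitment of a third factor to close the cycle.

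Once rigidity and orthogonality are in hand, anisotropy $\mbf{u}_i\cdot\mbf{u}_i=1$ is automatic, since otherwise the $i$th factor would see no reversals at all and remain cyclic; hence the $\mbf{u}_i$ are orthonormal. The reversals induced on the $i$th factor are then exactly those of inverting $V_i:=\{v:\mbf{c}(v)=\mbf{u}_i\}$, which must therefore be a decycling set, completing the forward implication. Finally, taking $\mbf{u}_i$ to be the standard basis vectors recovers the ``in particular'' clause: inverting the sets $V_1,\dots,V_k$ in turn affects only the respective factors (no edge has both endpoints in a single $V_i$ unless it lies inside that factor) and decycles each, exhibiting the acyclic digraph as one obtained by inverting a decycling set for every factor.
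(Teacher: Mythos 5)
Your ``if'' direction is correct and essentially the same as the paper's (a direct check via the dot-product criterion for edge reversal), and your deduction of the ``in particular'' clause from the characterisation is also fine. But the ``only if'' direction --- which is the entire substance of the theorem --- is not proved. You correctly identify what must be shown (within each factor all nonzero characteristic vectors coincide with a single $\mbf{u}_i$, and $\mbf{u}_i\cdot\mbf{u}_j=0$ for $i\neq j$), you correctly note that each factor must carry a nonzero vector and that reversed cross edges cannot form an alternating $2\times 2$ pattern, and you correctly observe that this chain structure alone is insufficient. What remains is exactly the hard part, and your sketch of it (``manufacturing a directed cycle\dots with forward cross edges routed through zero-vector vertices and with suitable paths inside the decycled factors\dots possibly an induction on $k$ or the recruitment of a third factor'') is a statement of intent, not an argument. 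In particular, working directly in $\hD$ makes ``paths inside the decycled factors'' genuinely problematic: each $D_i$ is an arbitrary oriented graph with $\inv(D_i)=1$ and may be very sparse (e.g.\ a single directed triangle plus many isolated vertices), so the reachability you need inside factors simply may not be available.

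The paper resolves this with two ideas your proposal lacks. First, it extends $\hD$ to a tournament $T$ for which $X_1,\dots,X_k$ is still decycling; each factor of $T$ then contains a directed triangle, so the problem reduces to characterising decycling families of $[\vv{C_3}]_k$ (Lemma~\ref{lem:C3_kjoin}). Second, for that case it exploits the total order associated with the transitive tournament obtained after inverting: labelling each triangle $\vv{u_iv_iw_i}$ so that $\mbf{u}_i\cdot\mbf{v}_i=0$ and $\mbf{u}_i\cdot\mbf{w}_i=1$, it first proves the vectors $\mbf{u}_1,\dots,\mbf{u}_k$ are linearly independent (by testing $\sum_{i\in I}\mbf{u}_i$ against $\mbf{v}_m,\mbf{w}_m$ for the $<$-maximal $u_m$), and then runs a reverse induction on nested subspaces, repeatedly analysing the $<$-minimal vertex among the $v_i,w_i$, to force $\mbf{v}_i=\mbf{0}$, $\mbf{w}_i=\mbf{u}_i$, and orthonormality. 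The availability of this global order (minimal elements, transitivity) is what lets one ``close the cycle'' in every bad case; your direct cycle-hunting in $\hD$ has no such handle. So the proposal as written has a genuine gap at precisely the step you flag as the main obstacle.
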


\subsection{Computational complexity}\label{subsec:intro_complexity}

One focus of Bang-Jensen, da Silva, and Havet's paper~\cite{bangjensen2022inversion} was on the computational complexity of deciding whether an oriented graph is $k$-invertible. More formally, they considered, for fixed $k\in\N_0$, the problem of \textsc{$k$-Inversion}:

\begin{center}
\tcbox[
colback=white, colframe=black]{
\begin{tabular}{l}
\textsc{\underline{Input}:} an oriented graph $D$. \\
\textsc{\underline{Problem}:} $\inv(D)\leq k$?
\end{tabular}
}
\end{center}

\noindent A first observation is that \textsc{$0$-Inversion} is equivalent to checking whether a digraph $D$ is acyclic, which is well known to be possible in time $O(\abs{V(D)}^2)$ (see~\cite[p. 612]{introtoalgs}), so we need only consider $k\geq 1$.

Bang-Jensen, da Silva, and Havet~\cite{bangjensen2022inversion} showed that \textsc{$1$-Inversion} is NP-complete using a reduction from \textsc{Monotone 3-in-1 SAT}. Then, using the special cases of the dijoin conjecture proved in that paper, they observed that for a digraph $D$ we have $\inv(D\ra D)=2$ if and only if $\inv(D)=1$, from which it follows that \textsc{$2$-Inversion} is also NP-complete. They conjectured that NP-completeness extends to \textsc{$k$-Inversion} for all $k\geq 3$, noting that this would follow from a similar argument if the dijoin conjecture were true. Of course, the full dijoin conjecture is not required, and indeed it is easy to see that \cref{thm:inv_kjoin} is enough: it implies that $\inv(D)=1$ if and only if $\inv([D]_k)=k$, which reduces \textsc{$1$-Inversion} to \textsc{$k$-Inversion} and hence shows the following (see \cref{sec:kjoins}).

\begin{theorem}\label{thm:NPcomplete}
\textsc{$k$-Inversion} is NP-complete for all $k\in\N$.
\end{theorem}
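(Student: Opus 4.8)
The plan is to establish membership in NP and then prove NP-hardness by a polynomial-time reduction from \textsc{$1$-Inversion}, which Bang-Jensen, da Silva, and Havet proved to be NP-complete. Membership is routine: for fixed $k$, a decycling family $X_1,\dots,X_k\subseteq V(D)$ serves as a certificate of size $O(k\abs{V(D)})$, and to verify it one performs the $k$ inversions (each in time $O(\abs{V(D)}^2)$) and then checks that the resulting digraph is acyclic (again $O(\abs{V(D)}^2)$). Hence $\textsc{$k$-Inversion}\in\text{NP}$, and the remaining task is to exhibit the reduction witnessing hardness.

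For the reduction, given an instance $D$ of \textsc{$1$-Inversion} I would output the $k$-join $\hD:=[D]_k$, which is computable in time polynomial in $\abs{V(D)}$, and claim that $\inv(D)\le 1$ if and only if $\inv(\hD)\le k$. The forward implication is immediate from subadditivity: inverting an optimal decycling set within each of the $k$ factors uses $k\inv(D)$ inversions in total and leaves $\hD$ acyclic (order the factors and, within each, use its now-acyclic order, noting all cross-edges point forward), so $\inv(\hD)\le k\inv(D)\le k$. The reverse implication is where the work lies, and this is exactly where \cref{thm:inv_kjoin} is deployed: when $\inv(D)=1$ that theorem gives $\inv(\hD)=\sum_{i=1}^k\inv(D)=k$, and when $D$ is acyclic $\hD$ is acyclic so $\inv(\hD)=0$; in both cases the inequality $\inv(\hD)\le k$ matches $\inv(D)\le 1$. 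Thus the reduction is correct provided that every $D$ with $\inv(D)\ge 2$ yields $\inv(\hD)\ge k+1$.

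The main obstacle is precisely this lower bound, and the subtlety is that \cref{thm:inv_kjoin} only controls $\inv([D_1,\dots,D_k])$ when every factor has inversion number at most $2$, with equality for at most one factor — a hypothesis that $[D]_k$ violates as soon as $\inv(D)\ge 2$. I would handle it in two pieces. First, since $D$ is an induced subdigraph of $\hD$ and the inversion number is monotone under passing to induced subdigraphs (restrict any decycling family to the relevant vertex subset), every $D$ with $\inv(D)\ge k+1$ already satisfies $\inv(\hD)\ge\inv(D)\ge k+1$. Second, for the remaining range $2\le\inv(D)\le k$ I would instead reduce via the mixed join $[D,\vv{C_3},\dots,\vv{C_3}]$ using $k-1$ copies of $\vv{C_3}$: this leaves a single factor of inversion number possibly equal to $2$, so \cref{thm:inv_kjoin} applies and gives inversion number exactly $\inv(D)+(k-1)$, which is at most $k$ precisely when $\inv(D)\le 1$. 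This route is clean as long as the \textsc{$1$-Inversion} instances may be taken with $\inv(D)\le 2$ — that is, as long as it is already NP-hard to distinguish $\inv(D)\le 1$ from $\inv(D)=2$ — which is the natural output of the reduction from \textsc{Monotone 3-in-1 SAT}. Verifying (or arranging) this bounded-inversion promise is the step I would be most careful about, after which the NP-completeness of \textsc{$k$-Inversion} follows for all $k\in\N$.
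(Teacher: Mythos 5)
Your overall architecture (NP membership via certificates, hardness by reducing \textsc{$1$-Inversion} to \textsc{$k$-Inversion} through a $k$-join, with \cref{thm:inv_kjoin} supplying the key lower bound) is the same as the paper's, but your treatment of the no-instances has a genuine gap. A reduction cannot branch on the value of $\inv(D)$, so reading you charitably your final map is $D\mapsto[D,\vv{C_3},\dots,\vv{C_3}]$; the problem is the range $3\leq\inv(D)\leq k$ (nonempty exactly when $k\geq 3$, i.e.\ precisely the new cases of the theorem). There \cref{thm:inv_kjoin} does not apply to the image, since one factor has inversion number at least $3$, and monotonicity only rescues you when $\inv(D)\geq k+1$. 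Your proposed escape --- assuming the hard instances of \textsc{$1$-Inversion} satisfy the promise $\inv(D)\leq 2$ --- is an unverified claim about the internals of Bang-Jensen, da Silva, and Havet's reduction from \textsc{Monotone 3-in-1 SAT}: their proof only guarantees $\inv(D)\geq 2$ for unsatisfiable formulas, not $\inv(D)=2$, and NP-hardness of the promise problem (distinguishing $\inv\leq 1$ from $\inv=2$) does not follow from NP-hardness of \textsc{$1$-Inversion} itself. As it stands, your argument is complete only for $k\leq 2$, which was already known.

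The paper closes exactly this gap with a simple tool you did not use: for every oriented graph $D$ and every $m\leq\inv(D)$ there is a spanning subdigraph of $D$ with inversion number exactly $m$ (delete edges one at a time; each deletion lowers the inversion number by at most $1$ --- this is observation (ii) in \cref{sec:prelims}). Hence if $\inv(D)\geq 2$ one can fix subdigraphs $D'$ and $D''$ of $D$ with $\inv(D')=1$ and $\inv(D'')=2$; then $D''\ra[D']_{k-1}$ is a subdigraph of $[D]_k$ to which \cref{thm:inv_kjoin} \emph{does} apply (one factor of inversion number $2$, the rest of inversion number $1$), giving $\inv([D]_k)\geq\inv(D''\ra[D']_{k-1})=k+1$ by monotonicity. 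With this the paper's single reduction $D\mapsto[D]_k$ handles all cases at once: $\inv(D)\leq 1$ gives $\inv([D]_k)\leq k$, and $\inv(D)\geq 2$ gives $\inv([D]_k)\geq k+1$, with no promise needed. The same trick would also repair your mixed join $[D,\vv{C_3},\dots,\vv{C_3}]$ verbatim: replace $D$ by $D''$ inside the join and invoke \cref{thm:inv_kjoin} plus monotonicity.
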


Bang-Jensen, da Silva, and Havet also considered the computational complexity of the same problem when the input is restricted to tournaments. For fixed $k\in\N$ the problem of \textsc{$k$-Tournament-Inversion} is:

\begin{center}
\tcbox[
colback=white, colframe=black]{
\begin{tabular}{l}
\textsc{\underline{Input}:} a tournament $T$. \\
\textsc{\underline{Problem}:} $\inv(T)\leq k$?
\end{tabular}
}
\end{center}

\noindent One way of analysing the complexity of this problem is to use $k$-inversion-critical tournaments: we say that a tournament $T$ is a \emph{$k$-inversion-critical tournament} if $\inv(T)=k$ but $\inv(T-\{v\})<k$ for all $v\in V(T)$, and denote by $\mathcal{IC}_k$ the set of $k$-inversion-critical tournaments. It is not difficult to see that a tournament has inversion number at most $k$ if and only if it contains no element of $\mathcal{IC}_{k+1} \cup\mathcal{IC}_{k+2}$ as a subtournament. Indeed, for any digraph $D$ and vertex $v \in V(D)$ with out-neighbourhood $A \subseteq V(D)$, adding $A$ and $A\cup\{v\}$ to a decycling family of $D-\{v\}$ gives a decycling family of $D$. We deduce that $\inv(D) \leq \inv(D-\{v\})+2$. Hence, if $\inv(T)>k$, then by arbitrarily deleting vertices from $T$ one by one, we can obtain a subtournament $T'$ of $T$ with $\inv(T')\in \{k+1,k+2\}$. This $T'$ contains a member of $\mathcal{IC}_{k+1} \cup\mathcal{IC}_{k+2}$ as a subtournament.

Belkhechine, Bouaziz, Boudabbous, and Pouzet~\cite{BELKHECHINE2010703} showed that $\mathcal{IC}_k$ is finite for all $k\in\N$. Writing $m_k$ for the maximum number of vertices of an element of $\mathcal{IC}_k$, it follows that \textsc{$k$-Tournament-Inversion} can be solved in time $O(\abs{V(T)}^{\max(m_{k+1},m_{k+2})})$. Thus, in particular, \textsc{$k$-Tournament-Inversion} can be solved in polynomial time for any fixed $k$. Plainly $\mathcal{IC}_1=\{\vv{C_3}\}$, so $m_1=3$, and $\mathcal{IC}_2$ was explicitly described in~\cite{pouzet}, giving $m_2=6$. However, no upper bound on $m_k$ is known for $k\geq 3$, so for no $k\geq 1$ does the above give a concrete polynomial bound on the complexity of \textsc{$k$-Tournament-Inversion}. Note also that this approach does not identify a decycling family of size $k$ given a $k$-invertible tournament, it can only confirm the existence of one. 

Bang-Jensen, da Silva, and Havet~\cite{bangjensen2022inversion} used an alternative approach to show that \textsc{$1$-Tournament-Inversion} can be solved in time $O(\abs{V(T)}^3)$ while \textsc{$2$-Tournament-Inversion} can be solved in time $O(\abs{V(T)}^6)$. The idea behind their algorithm for \textsc{$1$-Tournament-Inversion} is to check whether the tournament contains a vertex which can be made into a source, and for \textsc{$2$-Tournament-Inversion} they check whether it contains a pair of vertices which can be made into a source and a sink respectively.
They went on to ask for the least real numbers $r_k$ such that \textsc{$k$-Tournament-Inversion} can be solved in time $O(\abs{V(T)}^{r_k})$. We answer this question by showing that, perhaps surprisingly, for each fixed $k\in\N$ there is an algorithm solving \textsc{$k$-Tournament-Inversion} in time $O(\abs{V(T)}^2)$. In the language of complexity theory, this means that the likely NP-hard problem of determining whether $\inv(T)\leq k$ for inputs $k$ and $T$ (see \cref{conj:tnmnt_NPcomplete}) is fixed-parameter tractable when parameterised by $k$.\footnote{See \cite{downey2} for the definition of fixed-parameter tractability and an exposition of the surrounding theory.}

\begin{theorem}\label{thm:tournament_fpt}
For fixed $k\in\mathbb{N}$,  \textsc{$k$-Tournament-Inversion} can be solved for $n$-vertex tournaments in time $O(n^2)$. Moreover, if the input tournament is $k$-invertible, then our algorithm finds a decycling family of size at most $k$.
\end{theorem}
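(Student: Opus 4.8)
The plan is to work with the characteristic-vector reformulation already implicit in \cref{thm:kjoin_char}. For a colouring $c\colon V(T)\to\F_2^k$, let $T_c$ be the digraph obtained from $T$ by reversing each edge $uv$ with $c(u)\cdot c(v)=1$. Since an edge is inverted exactly when an odd number of the sets $X_1,\dots,X_k$ contain both its endpoints, we have $\inv(T)\le k$ if and only if there is some $c$ with $T_c$ acyclic, in which case the sets $X_i=\{v:c(v)_i=1\}$ form a decycling family. So the task becomes: decide whether some $c\colon V(T)\to\F_2^k$ makes $T_c$ transitive, and exhibit one. Reading $T$ already costs $\Theta(n^2)$, so we are really aiming for an input-linear procedure.

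The engine of the algorithm will be the following rigidity. If $T_c$ is transitive with order $v_1,\dots,v_n$, then $v_i\to v_j$ in $T_c$ for all $i<j$, which forces $c(v_i)\cdot c(v_j)=1\oplus a(v_i,v_j)$, where $a(x,y)=1$ precisely when $x\to y$ in $T$. Reading this with the source $v_1$ fixed, $v_1$ contributes a single $\F_2$-linear equation $c(v_1)\cdot c(u)=1\oplus a(v_1,u)$ on the colour of every other vertex $u$; peeling off sources one at a time, the colours $c(v_1),\dots,c(v_t)$ of an initial segment impose $t$ such linear constraints on each remaining colour. As soon as $c(v_1),\dots,c(v_t)$ span the subspace $U\subseteq\F_2^k$ generated by all colours used (so $t\le\dim U\le k$), the colour of every not-yet-placed vertex, lying in $U$, is pinned down by solving a bounded linear system over $\F_2$. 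Thus a correct colouring is essentially determined by a constant-sized seed, and the bulk of the vertices should be handled by linear algebra rather than by search.

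This suggests the following scheme: branch over the constantly many possibilities for the seed colours $c(v_1),\dots,c(v_t)\in\F_2^k$; for each branch reconstruct the remaining colours from the induced linear systems, build $T_c$, and test acyclicity in time $O(n^2)$ by a topological sort. If some branch yields an acyclic $T_c$ we output the associated sets; otherwise we report $\inv(T)>k$. Since there are $O_k(1)$ branches each costing $O(n^2)$, this would give the claimed bound. \emph{The main obstacle} is that the equation contributed by $v_\ell$ refers to $v_\ell$ itself, through the bit $a(v_\ell,u)$, so to run the propagation we must also locate the early sources $v_1,\dots,v_t$, and we cannot afford the naive $n^{\Theta(k)}$ search over which vertices they are. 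The crux is therefore to decouple the \emph{identities} of the early sources from their \emph{colours}. Here I would use that committing $c(v_1)=\mbf{a}$ already splits the other vertices into the prescribed sets $\{u:\mbf{a}\cdot c(u)=0\}=N^+(v_1)$ and $\{u:\mbf{a}\cdot c(u)=1\}=N^-(v_1)$, so a legitimate source is exactly a vertex whose neighbourhoods are compatible with being a source of $T_c$; maintaining the partially reconstructed colouring, the admissible choice of each successive source should be forced, up to a bounded and prunable ambiguity, by the constraints already accumulated.

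Two further points need care, and I expect them to constitute the technical heart of the argument. First, because the form over $\F_2$ is possibly degenerate (it is not a genuine inner product, as noted above), a spanning seed determines each later colour only up to the radical $U\cap U^{\perp}$, so one must argue that this residual ambiguity is either avoidable by a good choice of seed or resolvable cheaply by the acyclicity test; the same degeneracy means the pre-spanning phase, where the colours are confined only to cosets of a growing subspace, must be shown to have bounded length even when consecutive sources share colours and the span grows slowly. Second, several vertices may carry the same colour and hence interleave arbitrarily in the transitive order of $T_c$, so they cannot be separated by their class alone and must be ordered by the reconstructed colouring itself. Finally, for the `moreover' clause one checks that a successful branch directly produces the sets $X_i$, so a decycling family of size at most $k$ is output and not merely certified. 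Establishing the decoupling of source identities from source colours, and verifying that a constant number of $O(n^2)$-time propagate-and-test rounds suffices, is where the real work will lie.
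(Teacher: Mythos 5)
Your reformulation (search for a colouring $c\colon V(T)\to\F_2^k$ making $T_c$ transitive, with cyclic triples detected via the products $c(u)\cdot c(v)$) is sound and matches the paper's starting point. But the step you yourself flag as the crux --- decoupling the \emph{identities} of the early vertices of the unknown transitive order from their \emph{colours} --- is not just unfinished, it is false as you have formulated it. Your hope is that, once seed colours are committed, ``the admissible choice of each successive source should be forced, up to a bounded and prunable ambiguity, by the constraints already accumulated.'' Already for $k=1$ this fails: committing $c(v_1)=1$ forces the decycling set to be $N^-(v_1)\cup\{v_1\}$ (the in-neighbourhood of $v_1$ together with $v_1$), but this is a function of \emph{which vertex} $v_1$ is, and nothing in the accumulated constraints restricts that choice to a bounded list --- all $n$ vertices are a priori admissible, and distinct candidates yield genuinely distinct colourings that each require their own acyclicity test. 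This is precisely why the previously known algorithm for \textsc{$1$-Tournament-Inversion} (try each vertex as the would-be source) runs in time $O(n^3)$ rather than $O(n^2)$. Your propagate-and-test scheme, made honest, degenerates into this factor-$n$ (in general $n^{\Theta(k)}$) search over source identities, so the claimed $O_k(1)$ number of branches is not available, and the secondary issues you raise (the radical of the degenerate form, slow growth of the span) compound rather than resolve this.

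The paper escapes this trap with an idea absent from your proposal: \emph{iterative compression}. One recurses on $n$: a decycling family of size $k$ for $T_0-\{v\}$ extends to one of size $s=k+2$ for $T_0$ (add $A\cup\{v\}$ and $A$, where $A$ is the out-neighbourhood of $v$), which yields a \emph{known} transitive order reachable from $T_0$ by $s$ inversions. The compression step (\cref{lem:tnmnt_inv}) then scans the vertices $u_1,\dots,u_n$ in this known order and runs a dynamic programme whose state is the \emph{set of triples} of characteristic vectors in $\F_2^{s+k}$ realized by $T$-ordered triples of vertices so far, together with one witness assignment per state; transitivity of the target tournament is equivalent to this set containing no bad triple. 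Since there are at most $2^{3(s+k)}$ possible vector triples, there are at most $2^{2^{3(s+k)}}$ states --- constant in $n$ --- so each level of the scan costs $O(1)$ and the compression runs in time $O(n)$, giving $O(n^2)$ overall. This is the correct ``decoupling'': one never identifies which vertex is the source of the unknown order; only the bounded-size set of occurring vector triples relative to the \emph{known} auxiliary order matters. Note also that in this framework the degeneracy of the bilinear form never needs to be addressed. To repair your proposal you would need to import both the compression framework and this bounded-state scan; the linear-algebraic propagation alone cannot deliver the theorem.
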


Note that the exponent of $n$ in this running time is optimal, since any algorithm solving \textsc{$k$-Tournament-Inversion} needs to inspect the orientation of every edge in the input tournament.
However, the implied constant in the running time of our algorithm is doubly exponential in~$k$, so it is unlikely to be of practical use for large~$k$. 

\subsection{Relation to other parameters}\label{subsec:intro_tau}

Bang-Jensen, da Silva, and Havet~\cite{bangjensen2022inversion} also considered the relationship between the inversion number and other digraph parameters. 
Two well studied parameters of particular interest are the cycle transversal number and the cycle edge-transversal number, defined as follows. A \emph{cycle transversal} (or \emph{feedback vertex set}) in a digraph $D$ is a set of vertices of $D$ whose removal from $D$ leaves an acyclic digraph and the \emph{cycle transversal number} of $D$, denoted $\tau(D)$, is the minimum size of a cycle transversal in $D$. Analogously, a \emph{cycle edge-transversal} (or \emph{feedback arc set}) in $D$ is a set of edges of $D$ whose removal leaves an acyclic digraph and the \emph{cycle edge-transversal number} of $D$, $\tau'(D)$, is the minimum size of a cycle edge-transversal in $D$. 
Note that the inequality $\tau(D)\leq 2\tau'(D)$ always holds, since the endpoints of the edges in a cycle edge-transversal of $D$ form a cycle transversal of $D$.

Bang-Jensen, da Silva, and Havet~\cite{bangjensen2022inversion} made the following observations concerning the relationships between $\inv(D)$, $\tau(D)$, and $\tau'(D)$ for a digraph $D$. Firstly, we have $\inv(D)\leq \tau'(D)$. This follows from the fact that if $F\subseteq E(D)$ is a cycle edge-transversal of $D$, then since $(V(D),E(D)\setminus F)$ is acyclic, there is a labelling $v_1,\dots,v_n$ of $V(D)$ such that $\vv{v_jv_i}\not\in E(D)\setminus F$ if $i<j$. Applying the family of inversions $(\{v_i,v_j\}\colon i<j,\vv{v_jv_i}\in F)$ transforms $D$ into an acyclic digraph and hence $\inv(D)\leq \tau'(D)$ as claimed. They also observed that this inequality is tight for all values of $\tau'(D)$ as exhibited by $[\vv{C_3}]_k$, which clearly has cycle edge-transversal number $k$, and as mentioned above was shown in~\cite{pouzet} to have inversion number $k$.

Turning to $\tau(D)$, the inequality $\inv(D)\leq 2\tau(D)$ was obtained in~\cite{bangjensen2022inversion} as follows. After observing that $\tau(D)=0$ implies $\inv(D)=0$, we may assume that $\tau(D)\geq 1$. Let $S\subseteq V(D)$ be a cycle transversal in $D$ of size $\tau(D)$ and pick $v\in S$. Then observe that $D-\{v\}$ has cycle transversal number $\tau(D)-1$, with $S\setminus\{v\}$ a cycle transversal. Moreover, as noted in Section 1.2 we have $\inv(D) \leq \inv(D-\{v\})+2$, from which it follows by induction that $\inv(D)\leq 2\tau(D)$.

Bang-Jensen, da Silva, and Havet conjectured that this inequality is tight for all values of $\tau(D)$. Indeed, they considered the graph $V_5$ obtained by adding a vertex $v$ and edges $\vv{v1},\vv{2v},\vv{v3},\vv{4v}$ to the (transitive) tournament on vertex set $\{1,2,3,4\}$ with edges $\vv{ij}$ for $i<j$, which can easily be seen to have $\tau(V_5)=1$ and $\inv(V_5)=2$. They noted that if the dijoin conjecture holds, then $\tau([V_5]_k)=k$ and $\inv([V_5]_k)=2k$ for all $k$ (in fact, since $V_5$ is strongly connected, the case $k=2$ follows from the special cases for which they proved the dijoin conjecture). We construct digraphs with a similar character to $V_5$ which confirm their conjecture.

\begin{theorem}\label{thm:tau}
For all $k\in\N$ there exists an oriented graph $D$ with $\inv(D)=2\tau(D)=2k$.
\end{theorem}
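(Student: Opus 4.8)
The plan is to exhibit, for each $k$, a single gadget $H$ with $\tau(H)=1$ and $\inv(H)=2$ and then take the $k$-join $[H]_k$, aiming for $\tau([H]_k)=k$ and $\inv([H]_k)=2k$. A natural candidate is $V_5$ itself, or a strongly connected variant of it with a unique cycle-transversal vertex, engineered so that its size-$2$ decycling families are as rigid as possible. Two of the required relations are immediate from the $k$-join structure. Since every edge between distinct factors of $[D_1,\dots,D_k]$ runs from the lower-indexed factor to the higher-indexed one, every directed cycle is contained in a single factor; hence $\tau([H]_k)=\sum_i\tau(H)=k$, and decycling each factor separately (which leaves all inter-factor edges pointing forward) gives $\inv([H]_k)\le\sum_i\inv(H)=2k$. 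The entire content of the theorem therefore lies in the lower bound $\inv([H]_k)\ge 2k$.

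For the lower bound I would use the standard linear-algebraic reformulation of inversions: a decycling family of size $m$ is the same as an assignment $v\mapsto\mathbf{x}_v\in\F_2^m$ of a characteristic vector to each vertex, together with a linear order $\prec$ on the vertices, such that for every edge $\vv{uv}$ one has $\mathbf{x}_u\cdot\mathbf{x}_v=0$ when $u\prec v$ and $\mathbf{x}_u\cdot\mathbf{x}_v=1$ when $v\prec u$ (equivalently, the edge survives in its forward direction precisely when $\mathbf{x}_u\cdot\mathbf{x}_v=0$). Thus $\inv([H]_k)\ge 2k$ is equivalent to the nonexistence of such an assignment with $m<2k$. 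The restriction of any decycling assignment for $[H]_k$ to a single factor $F_i$ is a decycling assignment for the copy of $H$ living there; since $\inv(H)=2$, the vectors assigned to $F_i$ cannot all lie in a one-dimensional subspace, so they span a subspace $U_i$ of dimension at least two. The heart of the argument is to show that the subspaces $U_1,\dots,U_k$ coming from the $k$ factors are independent, so that $m\ge\dim(U_1+\dots+U_k)=\sum_i\dim U_i\ge 2k$.

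The independence is where the all-forward inter-factor edges must be exploited: for $i<j$ and every pair $u\in F_i$, $v\in F_j$ the constraint $\mathbf{x}_u\cdot\mathbf{x}_v=[u\succ v]$ couples the orders and the vectors of the two factors. I would show that if the vectors of two distinct factors failed to be independent, then these constraints would reintroduce a directed cycle, contradicting acyclicity. Concretely, I expect to peel off the last factor $F_k$ and argue by induction on $k$: the subgraph induced on $F_1\cup\dots\cup F_{k-1}$ is exactly $[H]_{k-1}$, so by the inductive hypothesis its vectors span a subspace of dimension at least $2(k-1)$; using the rigidity of $H$'s size-$2$ decycling families to pin down (up to the relevant $\F_2$-symmetry) the pair of vectors that $F_k$ must use, one shows that $U_k$ meets the span of the earlier factors only in $\mathbf{0}$, whence the total dimension is at least $2k$.

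The main obstacle is precisely this lower bound, and it is genuinely new because \cref{thm:inv_kjoin} does not apply here: every factor of $[H]_k$ has inversion number $2$, whereas that theorem permits equality for at most one factor, so its additivity cannot be invoked as a black box. Instead one must establish directly — adapting the method behind \cref{thm:kjoin_char} — that distinct inversion-$2$ factors occupy independent coordinate directions. I anticipate that the most delicate step will be making the rigidity statement for $H$ precise: one needs enough control over \emph{all} size-$2$ decycling families of the gadget that the inter-factor constraints can be forced to produce independence, and it is exactly this requirement that dictates the careful choice of $H$.
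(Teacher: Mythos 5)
Your reduction of the problem to the single inequality $\inv([H]_k)\geq 2k$ for a gadget $H$ with $\tau(H)=1$ and $\inv(H)=2$ is correct as far as it goes (the computations of $\tau([H]_k)=k$ and $\inv([H]_k)\leq 2k$ are fine, as is the observation that each factor's characteristic vectors must span a subspace of dimension at least two). The genuine gap is that this lower bound is not a "delicate step" you can defer: it is precisely an instance of \cref{conj:kjoin_true}, which the paper states as an \emph{open} conjecture. The paper's additivity result for $k$-joins (\cref{thm:inv_kjoin}) permits at most one factor of inversion number $2$, and its proof via \cref{thm:kjoin_char} relies essentially on the factors having inversion number $1$: there, a directed triangle inside each factor pins the characteristic vectors down to an orthonormal (hence linearly independent) system. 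No analogous rigidity statement is known for any inversion-$2$ gadget, and your sketch does not supply one. Your proposed induction also needs more than pairwise independence of the subspaces $U_1,\dots,U_k$ — three pairwise independent $2$-dimensional subspaces of $\F_2^m$ can span as few as $4$ dimensions — so one needs structural control of \emph{all} decycling assignments of $[H]_{k-1}$, not merely the value of its inversion number; this is exactly what nobody knows how to do once two or more factors have inversion number $2$. (For $k=2$ your plan does go through, since $\inv(H\ra H)=4$ is \cref{thm:2ra2}, but that theorem gives no structural information to feed the induction at $k=3$.)

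The paper takes an entirely different route that avoids $k$-joins of inversion-$2$ gadgets. It builds a single digraph on $\{u_0,\dots,u_{k-1}\}\cup\{v_0,\dots,v_{n-1}\}$: the $v_j$ form a long transitive tournament, and the edge between $u_i$ and $v_j$ is oriented according to the $2^i$ digit in the binary expansion of $j$, so deleting the $k$ vertices $u_i$ leaves an acyclic digraph and $\tau\leq k$. For the lower bound, given a hypothetical decycling family of size $2k-1$, a pigeonhole argument finds two blocks of $K=2^k$ consecutive $v_j$'s whose tuples of characteristic vectors coincide; the duplicate block is then used to manufacture directed triangles that force the $K$ vectors within a block to be pairwise distinct with constant pairwise dot product, and the Eventown theorem (\cref{thm:eventown}) caps the number of such vectors in $\F_2^{2k-1}$ at $2^{(2k-1)/2}<2^k$, a contradiction. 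So the counting that your approach hopes to extract from linear independence is instead obtained from an extremal set-theoretic bound, sidestepping the open additivity question entirely.
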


\subsection{The extremal problem}\label{subsec:intro_inv(n)}
Finally, we consider $\inv(n)$, defined for each $n\in\N$ as the maximum inversion number of an oriented graph (or, equivalently, a tournament) on $n$ vertices. Belkhechine, Bouaziz, Boudabbous, and Pouzet~\cite{BELKHECHINE2010703} were the first to study this parameter, obtaining bounds of the form\footnote{All logarithms in this paper are taken base 2.}
\[\frac{n}{2}-\log(n)+O(1)\leq \inv(n)\leq n + O(1).\]
Their lower and upper bounds follow from counting and inductive arguments respectively (see \cref{sec:inv(n)} for details), and they conjectured that $\inv(n)\geq \floor{\frac{n-1}{2}}$ for all $n$. Bounds of the form above previously remained the best known, with Bang-Jensen, da Silva, and Havet~\cite{bangjensen2022inversion} noting that the $O(1)$ term in the upper bound can be improved very slightly. 

Using a random construction, we show that $\inv(n)=(1+o(1))n$.

\begin{theorem}\label{thm:inv_lower}
For sufficiently large $n$ we have
\[\inv(n) \geq n- \sqrt{2 n \log(n)}.\]
Moreover, a uniformly random labelled $n$-vertex tournament has at least this inversion number with probability tending to $1$.
\end{theorem}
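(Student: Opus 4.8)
The plan is to show that most tournaments require many inversions by bounding the number of tournaments that can be made acyclic by few inversions. I would fix a target $t = \lfloor n - \sqrt{2n\log n}\rfloor$ and prove that a uniformly random labelled tournament $T$ on $[n]$ satisfies $\inv(T) \geq t$ with probability $1-o(1)$; the existence statement in \cref{thm:inv_lower} follows immediately. The key observation is that each inversion, together with the final acyclic structure, can be encoded succinctly, so that the number of tournaments admitting a decycling family of size less than $t$ is a vanishing fraction of the total $2^{\binom{n}{2}}$.

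First I would set up the counting. Suppose $T$ has a decycling family $X_1,\dots,X_k$ with $k < t$. To reconstruct $T$ it suffices to specify: (i) the $k$ subsets $X_1,\dots,X_k \subseteq [n]$, which requires only the characteristic vector in $\F_2^k$ of each vertex, and (ii) the resulting acyclic digraph on $[n]$, which is determined by a linear order on $[n]$ (its unique topological sort) since a tournament with a given vertex set that is acyclic is transitive. Crucially, once the characteristic vectors and the acyclic order are known, the orientation of each edge $uv$ of $T$ is determined: the edge is reversed from its acyclic orientation precisely when $u$ and $v$ lie together in an odd number of the $X_i$, i.e.\ when the scalar product of their characteristic vectors over $\F_2$ is $1$. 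Hence the map from $T$ to the pair (characteristic-vector assignment, acyclic order) is injective on the set of $k$-invertible tournaments, giving at most $(2^k)^n \cdot n! = 2^{kn} n!$ tournaments with $\inv(T) \le k$.

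Next I would sum over $k$ from $0$ to $t-1$. The dominant term is $k = t-1$, so the number of tournaments with $\inv(T) < t$ is at most roughly $t \cdot 2^{(t-1)n} n!$. Comparing this with $2^{\binom{n}{2}}$ and using $n! \le n^n = 2^{n\log n}$, the probability that a random tournament has $\inv(T) < t$ is at most
\begin{equation*}
t\,2^{(t-1)n + n\log n - \binom{n}{2}}.
\end{equation*}
Writing $\binom{n}{2} = \frac{n^2 - n}{2}$ and substituting $t = n - \sqrt{2n\log n}$, the exponent becomes $(t-1)n + n\log n - \frac{n^2-n}{2}$, and the leading behaviour is governed by $tn - \frac{n^2}{2} + n\log n \approx \frac{n^2}{2} - n\sqrt{2n\log n} + n\log n$ versus the $\frac{n^2}{2}$ coming from $\binom{n}{2}$; the choice of $t$ is engineered so that $-n\sqrt{2n\log n}$ dominates the positive $n\log n$ term and the exponent tends to $-\infty$. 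I would carry out this arithmetic carefully, absorbing the polynomial factor $t$ and lower-order terms, to conclude that the probability is $o(1)$.

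The main obstacle is getting the constant inside the square root correct and verifying that the bookkeeping genuinely forces the exponent to $-\infty$ for the stated value of $t$: the crude bound $n! \le 2^{n\log n}$ must be balanced against $\binom{n}{2}$ precisely, since the gain from $-n\sqrt{2n\log n}$ and the loss from $+n\log n$ are both of smaller order than $n^2$ but must be compared with each other. One must check that $n\sqrt{2n\log n}$ grows faster than $n\log n$ (which it does, being of order $n^{3/2}\sqrt{\log n}$), so the negative term wins and a factor of $2^{-\Theta(n^{3/2}\sqrt{\log n})}$ survives, easily killing the polynomial prefactor. Beyond this estimate the argument is routine: the injectivity of the encoding and the transitivity of acyclic tournaments are both elementary, so the heart of the proof is the union bound and the single exponent computation.
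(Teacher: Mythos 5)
There is a genuine gap, and it is fatal to the approach: your final exponent computation is wrong, and done correctly it shows that your union bound cannot work. Your encoding is indeed injective, so the number of tournaments with $\inv(T)\leq k$ is at most $2^{kn}n!$; the resulting probability bound is $2^{kn+\log(n!)-\binom{n}{2}}$ up to a polynomial prefactor. But with $k=t-1$ and $t=n-\sqrt{2n\log(n)}$ the exponent is $(t-1)n+\log(n!)-\binom{n}{2}=\tfrac{n^2}{2}-n\sqrt{2n\log(n)}+n\log(n)+O(n)$, and since $n\sqrt{2n\log(n)}=O(n^{3/2}\sqrt{\log(n)})=o(n^2)$, this tends to $+\infty$, not $-\infty$: the $\tfrac{n^2}{2}$ term, which you silently discarded when comparing ``$tn-\tfrac{n^2}{2}+n\log(n)$ versus the $\tfrac{n^2}{2}$ coming from $\binom{n}{2}$'', dominates everything else. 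Indeed the bound $2^{kn}n!$ already exceeds the total number $2^{\binom{n}{2}}$ of tournaments once $kn>\binom{n}{2}$, i.e.\ once $k$ is a little above $n/2$, so it is vacuous in the range you need. The breakeven point of this counting argument is $k\approx\tfrac{n-1}{2}-\tfrac{\log(n!)}{n}\approx\tfrac{n}{2}-\log(n)$; this is exactly the old lower bound of Belkhechine, Bouaziz, Boudabbous, and Pouzet~\cite{BELKHECHINE2010703} recalled in \cref{subsec:lower_bounds}, and no choice of $t$ can push the same encoding past roughly $n/2$.

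The missing idea is that specifying all $k$ sets (at a cost of $kn$ bits) is enormously wasteful: by \cref{obs:atoms}, the tournament produced depends on $X_1,\dots,X_k$ only through the $\binom{n}{2}$ parities $\mbf{u}_a\cdot\mbf{u}_b$ of pairs of characteristic vectors, i.e.\ only through the off-diagonal part of $N=\sum_{i=1}^k M_i$, where $M_i$ is the symmetric matrix over $\F_2$ whose $(a,b)$ entry is $1$ if and only if $a,b\in X_i$. Each $M_i$ has rank at most $1$, so $N$ has rank at most $k$, and symmetric matrices of rank at most $k=n-s$ are rare. The paper's proof exploits exactly this collision structure: encoding $T$ as a symmetric matrix $M_T$ with uniformly random diagonal, $\inv(T)\leq k$ forces $\rk(M_T+M)\leq k$ for at least one of the $n!2^n$ matrices $M$ encoding transitive tournaments, and by \cref{lem:inv_lower} a uniformly random symmetric binary matrix has rank at most $n-s$ with probability at most $2^{s\log(n)-\binom{s}{2}}$. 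With $s=\lfloor\sqrt{2n\log(n)}\rfloor$, the term $\binom{s}{2}\approx n\log(n)$ is precisely what cancels the $\log(n!)$ arising from the union bound over transitive orders, leaving an exponent $-n(\log(e)-1)+o(n)\to-\infty$. So the heart of the proof is not routine bookkeeping but this rank lemma; without it, or some comparable way of counting only the achievable inversion patterns $N$ rather than all $k$-tuples of sets, the counting argument stalls at $\inv(n)\gtrsim n/2-\log(n)$.
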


In \cref{sec:inv(n)} we also show that $\inv(n) \leq n-\log(n+1)$.

\subsection{Outline of the paper}\label{subsec:outline}
The remainder of the paper is organised as follows. 
In \cref{sec:prelims} we introduce some further notation, definitions, and preliminary observations which will be useful in the rest of the paper. 
In the very short \cref{sec:dijoin} we prove \cref{thm:dijoin_ce}, constructing a counterexample to the dijoin conjecture. 
Our results on the inversion number of $k$-joins, \cref{thm:2ra2}, \cref{thm:inv_kjoin}, and \cref{thm:kjoin_char}, are proved in \cref{sec:kjoins}, along with \cref{thm:NPcomplete}.
\Cref{sec:tournament_fpt} concerns the complexity of \textsc{$k$-Tournament-Inversion} and contains the proof of \cref{thm:tournament_fpt}.
We give the proof of \cref{thm:tau} in \cref{sec:tau}. In
\cref{sec:inv(n)} we discuss the existing bounds on $\inv(n)$ before proving \cref{thm:inv_lower} and giving an improved upper bound. 
Finally, in \cref{sec:conclusion} we restate some conjectures and questions from previous papers which remain open and pose some new ones of our own.

\noindent\textbf{Note added.} Almost simultaneously with the initial release of this paper, Aubian, Havet, H\"{o}rsch, Klingelhoefer, Nisse, Rambaud, and Vermande announced independent work~\cite{aubian} on some of the problems we address here. Specifically, they prove a stronger version of \cref{thm:dijoin_ce} (in fact, they prove that a strong version of our \cref{conj:dijoin_false} holds provided at least one of $\ell$ and $r$ is odd and at least 3) and they show upper and lower bounds on $\inv(n)$ of forms similar to those we give in \cref{subsec:intro_inv(n)}. 

\section{Notation and preliminaries}\label{sec:prelims}

In this section we detail some of the definitions, observations, and notation to be used in the rest of the paper. As noted above, all digraphs will be oriented graphs, that is, loopless directed graphs with at most one edge between each pair of vertices. An \emph{acyclic digraph} is a digraph with no directed cycles. In the case where the digraph is a tournament, we use the term \emph{transitive} instead of acyclic. Note that for each $n\in\N$ there is a unique unlabelled transitive tournament on $n$ vertices. To a transitive tournament $T$ we associate the total order $<$ on $V(T)$ where $u<v$ for all $u,v\in V(T)$ such that $\vv{uv}\in E(T)$. We write $[n]$ for the set $\{1,2,\dots,n\}$. For a digraph $D$ and a set $S\subseteq V(D)$ we write $D-S$ for the digraph produced by deleting the vertices in $S$ from $D$. We now give the following key definitions.

\begin{definition}\label{def:charvec_atom}
Recall that for a digraph $D$, sets $X_1,\dots,X_k\subseteq V(D)$, and a vertex $v\in V(D)$, the \emph{characteristic vector} of $v$ in $X_1,\dots,X_k$ is $(I_{\{v\in X_i\}}:i\in[k])\in\F_2^k$, where $I_{\{v\in X_i\}}$ is the indicator function of the event $v\in X_i$. Define an equivalence relation $\sim$ on $V(D)$ by setting $u\sim v$ if $u$ and $v$ have the same characteristic vector in $X_1,\dots,X_k$. The \emph{atoms} of $X_1,\dots,X_k$ in $D$ are the equivalence classes of this relation.
\end{definition}

Note that, equivalently, the atoms of $X_1,\dots,X_k$ in $D$ are the atoms of the set algebra on $V(D)$ generated by $X_1,\dots,X_k$, and that there are at most $2^k$ atoms for given $D$ and $X_1,\dots,X_k$. The next observation will be useful throughout the paper.  

\begin{observation}\label{obs:atoms}
Let $D$ be a digraph and suppose that $u,v\in V(D)$ are joined by an edge in $D$. Let $X_1,\dots,X_k\subseteq V(D)$. Write $\mbf{u},\mbf{v}\in\F_2^k$ for the characteristic vectors of $u$ and $v$ in $X_1,\dots,X_k$ respectively. Then the edge between $u$ and $v$ undergoes a net change in orientation when $X_1,\dots,X_k$ are inverted in $D$ if and only if $\mbf{u}\cdot\mbf{v}=1$.
\end{observation}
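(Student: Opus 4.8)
The plan is to track how each edge's orientation changes as we invert the sets $X_1,\dots,X_k$ one at a time, and to count the net effect over all $k$ inversions. The key elementary fact is that a single inversion of a set $X_i$ reverses the edge between $u$ and $v$ precisely when both endpoints lie in $X_i$, that is, when $I_{\{u\in X_i\}}=I_{\{v\in X_i\}}=1$, equivalently when the product $I_{\{u\in X_i\}}\cdot I_{\{v\in X_i\}}$ equals $1$ (as integers, or in $\F_2$). So the $i$th inversion toggles the orientation of the edge iff the $i$th coordinates of $\mbf{u}$ and $\mbf{v}$ are both $1$.

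The main step is then simply to observe that the net change in orientation is a parity count: since inverting the same edge an even number of times returns it to its original orientation and an odd number of times reverses it, the edge between $u$ and $v$ undergoes a net reversal after applying all of $X_1,\dots,X_k$ if and only if the number of indices $i\in[k]$ for which both endpoints lie in $X_i$ is odd. That count is exactly $\sum_{i=1}^k I_{\{u\in X_i\}}I_{\{v\in X_i\}}$, and its parity is precisely the value of $\mbf{u}\cdot\mbf{v}$ computed over $\F_2$, since the $\F_2$ scalar product is defined as $\sum_i u_i v_i \bmod 2$. Hence the net orientation change occurs iff $\mbf{u}\cdot\mbf{v}=1$, as claimed.

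I should be slightly careful about one subtlety: the claim in \cref{obs:atoms} concerns the final orientation after all inversions are performed, and the problem statement has already noted that this final digraph is independent of the order in which the inversions are applied. This independence is what licenses reducing the net effect on a single edge to a pure parity count — the order in which the $k$ toggles are applied to a fixed edge does not matter, since toggling is an involution and successive toggles commute. Thus no difficulty arises from ordering, and the argument is genuinely a one-line parity computation once the single-inversion behaviour is recorded.

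There is essentially no hard part here; the statement is a foundational bookkeeping lemma rather than a substantive result, and the entire content is the translation of ``an odd number of the $X_i$ contain both $u$ and $v$'' into the $\F_2$ scalar product $\mbf{u}\cdot\mbf{v}$. If any step warrants care, it is only making explicit that we are counting parity (so that even multiplicities cancel), and noting that the endpoints being joined by an edge in $D$ guarantees there is an edge to reverse in the first place; the conclusion then follows immediately from the definition of $\mbf{u}\cdot\mbf{v}$ over $\F_2$.
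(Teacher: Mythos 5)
Your proof is correct and takes essentially the same approach as the paper, which simply notes that $\mbf{u}\cdot\mbf{v}$ is the parity of the number of sets $X_i$ containing both $u$ and $v$. Your elaboration of the single-inversion toggle behaviour and the order-independence of inversions is a fuller writing-out of exactly this parity argument.
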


This follows from the fact that $\mbf{u}\cdot\mbf{v}$ is the parity of the number of $X_1,\dots,X_k$ which contain both $u$ and $v$. An obvious implication of \cref{obs:atoms} is that given $D$ and $X_1,\dots,X_k$, for every pair of (not necessarily distinct) atoms $A$ and $B$, either all edges $\{ab \colon a\in A, b\in B\}$ undergo a net orientation change when $X_1,\dots,X_k$ are inverted, or none of them do. In particular, for every vertex $v$ and atom $A$, either all edges $\{va \colon a\in A\}$ change orientation or none of them do.

Finally, we note some simple observations which will be used freely in what follows.
\begin{enumerate}[nolistsep, label=(\roman*)]
    \item If $D'$ is a subdigraph of an oriented graph $D$, then $\inv(D')\leq \inv(D)$.
    \item For every oriented graph $D$ and every non-negative integer $k\leq \inv(D)$, there exists a spanning subdigraph of $D$ with inversion number $k$.
    \item If $X_1,\dots,X_k$ is a decycling family of an oriented graph $D$, then $D$ can be extended to a tournament $T$ for which $X_1,\dots,X_k$ is still a decycling family. In particular $\inv(T)=\inv(D)$.
\end{enumerate}
For (ii), delete edges of $D$ one by one, noting that the inversion number drops by at most 1 at each step. For (iii), invert the decycling family in $D$, extend the resulting acyclic digraph to a transitive tournament, then invert the decycling family again.

\section{A counterexample to the dijoin conjecture}\label{sec:dijoin}

In this short section we give a counterexample to the dijoin conjecture of Bang-Jensen, da Silva, and Havet~\cite{bangjensen2022inversion}, that is, the conjecture that $\inv(L \ra R)=\inv(L)+\inv(R)$ for all oriented graphs $L$ and $R$. As noted in the introduction, this conjecture is equivalent to its restriction to tournaments. Indeed, suppose that $L$ and $R$ are digraphs with $\inv(L\ra R)<\inv(L)+\inv(R)$. Extend $L\ra R$ to a tournament of the same inversion number and observe that this tournament is $L'\ra R'$ for some tournaments $L'$ and $R'$ extending $L$ and $R$ respectively. These clearly satisfy $\inv(L')\geq \inv(L)$ and $\inv(R')\geq \inv(R)$, so we have tournaments $L'$ and $R'$ with $\inv(L'\ra R')<\inv(L)+\inv(R)\leq \inv(L')+\inv(R')$.

\begin{proof}[Proof of \cref{thm:dijoin_ce}]
Let $L$ be a copy of $\vv{C_3}$. Suppose that $R$ is a tournament with $\inv(R)=3$ for which there exist disjoint $A,B,C \subseteq V(R)$ such that $A\cup B$, $A \cup C$ and $B\cup C$ form a decycling family of $R$. Then for distinct vertices $u,v\in V(L)$ the sets $A\cup B \cup \{u,v\}$, $A \cup C \cup \{u,v\}$ and $B\cup C \cup \{u,v\}$ form a decycling family of $L\ra R$, demonstrating that \[\inv(L\ra R)=3<4=\inv(L)+\inv(R).\]

One way to construct such an $R$ is as follows: let $R$ be the tournament with vertex set $[9]$, let $A=\{1,3\}$, $B=\{4,6\}$, and $C=\{7,9\}$, and let the edge $ij$ be directed backwards (that is, from $j$ to $i$ when $i<j$) if and only if $i$ and $j$ are both in $A\cup B\cup C$, but not both in $A$, $B$, or $C$. It is clear that inverting $A\cup B$, $A \cup C$ and $B\cup C$ transforms $R$ into a transitive tournament, and a computer check shows that $\inv(R)=3$, as required.
\end{proof}

\section{Decycling families of \texorpdfstring{$k$-joins}{k-joins}}\label{sec:kjoins}

In this section we prove \cref{thm:kjoin_char}, which characterises the decycling families of size $k$ of $k$-joins of digraphs each with inversion number $1$. We will then deduce \cref{thm:inv_kjoin} from this characterisation, and use \cref{thm:inv_kjoin} to obtain \cref{thm:2ra2} and \cref{thm:NPcomplete}.
The bulk of the work in our proof of \cref{thm:kjoin_char} is put towards proving \cref{lem:C3_kjoin}, which deals with the case $\hD=[\vv{C_3}]_k$.

\begin{lemma}\label{lem:C3_kjoin}
Let $k\in\N$, let $\hD=[\vv{C_3}]_k$, and let $X_1,\dots,X_k\subseteq V(\hD)$ be a decycling family of $\hD$. Then there exist orthonormal vectors $\mbf{u}_1,\dots,\mbf{u}_k\in\F_2^k$ such that in the $i$th factor of $[\vv{C_3}]_k$, one vertex has characteristic vector $\mbf{0}$ and the other two have characteristic vector~$\mbf{u}_i$.
\end{lemma}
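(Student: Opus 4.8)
The plan is to exploit the fact that $\hD=[\vv{C_3}]_k$ is itself a tournament on $3k$ vertices: inverting the decycling family $X_1,\dots,X_k$ turns it into a transitive tournament, and hence \emph{every} induced sub-tournament becomes transitive. I would track the orientation changes purely through \cref{obs:atoms}, so that the only data that matter are the characteristic vectors $\mbf{v}\in\F_2^k$ of the $3k$ vertices: the edge between $u$ and $v$ reverses exactly when $\mbf{u}\cdot\mbf{v}=1$. Writing the final transitive order as $\prec$, the task becomes a purely linear-algebraic one over $\F_2$, namely to show that in each factor two of the three characteristic vectors coincide in some value $\mbf{u}_i$ while the third is $\mbf{0}$, and that $\mbf{u}_1,\dots,\mbf{u}_k$ are orthonormal.

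First I would record two local constraints coming from transitivity of small sub-tournaments. Applying transitivity to a single factor, its triangle $\vv{C_3}$ becomes transitive if and only if the number of reversed edges is $1$ or $2$, that is, exactly one or two of the three pairwise products among its characteristic vectors equal $1$. Applying transitivity to a pair of factors $i<j$ yields a nesting phenomenon: since for $x$ in factor $i$ and $y$ in factor $j$ the (originally forward) cross-edge is reversed exactly when $\mbf{x}\cdot\mbf{y}=1$, and a reversed edge means $y\prec x$, the sets $\{y:\mbf{x}\cdot\mbf{y}=1\}$ are nested initial segments of factor $j$'s internal $\prec$-order as $x$ ranges over factor $i$. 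Equivalently, the $3\times 3$ matrix of cross-products $(\mbf{x}\cdot\mbf{y})$ is a monotone ``staircase'' $0/1$ matrix; this is the main structural leverage on how different factors interact.

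With these in hand I would pin down the per-factor pattern and orthonormality. Collecting the characteristic vectors as the columns of a matrix $N\in\F_2^{k\times 3k}$, the reversal pattern is $N^\top N$ and $\rk N\le k$. Since $\inv(\hD)=k$ by Pouzet, Kaddour, and Thatte~\cite{pouzet} and we are using exactly $k$ inversions, a minimality argument should show that no dimension may be wasted: the columns of $N$ must span all of $\F_2^k$, for otherwise one could reproduce the same reversal pattern with fewer than $k$ sets and contradict $\inv(\hD)=k$. Feeding this into the staircase constraint rules out the exotic single-factor configurations (three distinct vectors, or a repeated vector that is not the reversed pair) and forces each factor to consist of one $\mbf{0}$ vertex and a repeated vector $\mbf{u}_i$ with $\mbf{u}_i\cdot\mbf{u}_i=1$ (the latter because the unique reversed edge of the factor must join the two equal vectors). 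Finally, for $i<j$ one checks directly that $\mbf{u}_i\cdot\mbf{u}_j=1$ is impossible: combining the $\mbf{0}$ vertex of factor $i$, a $\mbf{u}_j$ vertex, and a suitable $\mbf{u}_i$ vertex produces a directed triangle in the supposedly transitive result. Hence $\mbf{u}_1,\dots,\mbf{u}_k$ are orthonormal, as required.

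The main obstacle is this last step: converting transitivity of the sub-tournaments into the rigid global pattern. Two points need care. The minimality and spanning argument lives over $\F_2$, whose standard bilinear form is degenerate, so realising a prescribed reversal pattern $N^\top N$ with the fewest sets is not simply a matter of rank and must be argued carefully (the freedom in the diagonal entries $\mbf{v}\cdot\mbf{v}$ genuinely matters). Moreover, eliminating every exotic per-factor configuration requires combining the single-factor parity condition with the staircase condition \emph{across} factors rather than treating factors in isolation, and this is where the bulk of the casework will sit.
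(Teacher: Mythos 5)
Your setup (reducing everything to the characteristic vectors via \cref{obs:atoms} and transitivity of sub-tournaments) matches the paper's, and your final step — deriving $\mbf{u}_i\cdot\mbf{u}_j=0$ from an explicit directed triangle once the per-factor structure is known — is sound. But the two middle steps, which carry the entire weight of the lemma, do not hold up. First, your spanning argument rests on a false principle. You claim that if the characteristic vectors span only a $d$-dimensional subspace $W$ with $d<k$, then the same reversal pattern could be produced by $d$ sets, contradicting $\inv(\hD)=k$. Over $\F_2$ this is not true: when the dot product restricted to $W$ is alternating (every vector isotropic), realizing the pattern can require $d+1$ sets. Concretely, take the vectors $(1,1,0,0),(0,1,1,0),(1,0,1,0)\in\F_2^4$, each the characteristic vector of two vertices: they span a $2$-dimensional space, all self-products are $0$ and all cross-products are $1$, and a short case check shows this pattern cannot be realized by vectors in $\F_2^2$ (each class would have to be $\{(0,1),(1,0)\}$ or $\{(1,1),(1,1)\}$, and any two classes of the same type violate a cross-product constraint), though it can be realized in $\F_2^3$. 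So your argument cannot exclude precisely the boundary case $d=k-1$ with alternating restricted form, where the pattern may need $d+1=k$ sets — perfectly consistent with $\inv(\hD)=k$. You flag this subtlety yourself, but it is not a technicality to be "argued carefully": the rank/minimality framing gives no handle on it. The paper avoids the issue entirely by proving linear independence directly from the transitive order (\cref{claim:linindep}, following~\cite{pouzet}): choosing the factor $m$ whose vertex $u_m$ is $<$-maximal, every other $u_i$ satisfies $\mbf{u}_i\cdot\mbf{v}_m=\mbf{u}_i\cdot\mbf{w}_m$ while $\mbf{u}_m\cdot\mbf{v}_m\neq\mbf{u}_m\cdot\mbf{w}_m$, so no subset of the $\mbf{u}_i$ sums to $\mbf{0}$.

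Second, even granting full span, the step from "per-factor parity plus pairwise staircase constraints" to the rigid conclusion (one vertex with vector $\mbf{0}$, two with a common vector $\mbf{u}_i$, $\mbf{u}_i\cdot\mbf{u}_i=1$) is only asserted; you yourself locate "the bulk of the casework" there, and that casework is the actual content of the lemma. In the paper this is \cref{claim:v=0_u=w}, proved by a reverse induction on nested subspaces: one takes the $<$-minimal vertex $z$ among \emph{all} $v_i,w_i$ with $i\geq\ell$, expands $\mbf{z}$ in the basis $\mbf{u}_\ell,\dots,\mbf{u}_k$, rules out every nontrivial expansion by maximality arguments that compare vertices \emph{across} factors, concludes $\mbf{z}=\mbf{0}$, and then recurses inside the orthogonal complement of $\mbf{u}_\ell$. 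This uses global order information that pairwise (two-factor) staircase conditions do not capture, and nothing in your outline substitutes for it. As it stands, the proposal is a plausible framing with the two decisive steps missing, one of them irreparably so in its current form.
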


We will use the setup that Pouzet, Kaddour, and Thatte~\cite{pouzet} introduced in their proof that $\inv(\hD)=k$. The first part of our argument is essentially a reformulation of theirs, but we include it for completeness and to build intuition.

\begin{proof}[Proof of \cref{lem:C3_kjoin}]
Let $T$ be the transitive tournament obtained by inverting the sets $X_1,\dots,X_k$ in $\hD$, and let $<$ be the total order on $V(\hD)$ associated to~$T$. Note that for all $i$, after inverting $X_1, \dots, X_k$ the $i$th factor has one vertex that has out-edges to the other two vertices in the factor and exactly one of these edges has undergone a net reversal. Thus we can label the vertices in the $i$th factor as $u_i,v_i,w_i$ where $\vv{u_iv_iw_i}$ is a directed 3-cycle in $\hD$, and the edge between $u_i$ and $w_i$ undergoes a net reversal under $X_1,\dots,X_k$ while the edge between $u_i$ and $v_i$ does not. In particular, we will use throughout that $u_i<v_i,w_i$ and that, by \cref{obs:atoms}, $\mbf{u}_i\cdot\mbf{v}_i=0$ and $\mbf{u}_i\cdot\mbf{w}_i=1$ where $\mbf{u}_i,\mbf{v}_i,\mbf{w}_i\in\F_2^k$ are the respective characteristic vectors of $u_i,v_i,w_i$ in $X_1,\dots,X_k$.  We have the following claim, originally proved in~\cite{pouzet}.

\begin{claim}[\cite{pouzet}]\label{claim:linindep}
The vectors $\mbf{u}_1,\dots,\mbf{u}_k\in\F_2^k$ are linearly independent.
\end{claim}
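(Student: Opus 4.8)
The plan is to argue by contradiction: assuming $\mbf{u}_1,\dots,\mbf{u}_k$ are linearly dependent, I will exhibit a directed cycle in the transitive tournament $T$, contradicting that $T$ is acyclic. First I would fix a minimal nonempty $S\subseteq[k]$ with $\sum_{i\in S}\mbf{u}_i=\mbf{0}$, and record the dictionary between algebra and orientation supplied by \cref{obs:atoms}: for adjacent vertices $x,y$ with characteristic vectors $\mbf{x},\mbf{y}$, the edge $xy$ has the same orientation in $T$ as in $\hD$ exactly when $\mbf{x}\cdot\mbf{y}=0$. In particular I will use repeatedly that $u_i<v_i$ and $u_i<w_i$ in $T$ for every $i$, that $\mbf{u}_i\cdot\mbf{w}_i=1$, and that for $i\neq j$ the cross-edge between the two factors is reversed precisely when the relevant scalar product equals $1$.

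The engine of the argument is the following consequence of the dependency: for every vertex $y$ of $\hD$ with characteristic vector $\mbf{y}$ we have $\sum_{i\in S}\mbf{u}_i\cdot\mbf{y}=\big(\sum_{i\in S}\mbf{u}_i\big)\cdot\mbf{y}=0$, so an even number of the sources $\{u_i:i\in S\}$ have their edge to $y$ reversed. Taking $y=w_j$ for $j\in S$ and using $\mbf{u}_j\cdot\mbf{w}_j=1$, it follows that for each $j\in S$ there is a \emph{witness} $i\in S\setminus\{j\}$ with $\mbf{u}_i\cdot\mbf{w}_j=1$, that is, the edge between $u_i$ and $w_j$ is reversed. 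Unwinding the dictionary, such a witness forces an order relation in $T$: when $i<j$ it gives $w_j<u_i$, and hence $u_j<w_j<u_i$, so $u_j<u_i$; when $i>j$ it only gives $u_i<w_j$.

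Next I would turn these forced relations into a cycle. The clean case is immediate: choosing $m\in S$ so that $u_m$ is largest in the order $<$ among $\{u_i:i\in S\}$, any witness $i<m$ of $m$ would yield $u_m<u_i$, contradicting the maximality of $u_m$. Thus the whole difficulty is concentrated in the remaining case, where the witnesses of the extremal factors all have larger index and so produce only the weaker relations $u_i<w_j$; this factor-order asymmetry is the main obstacle. To handle it I would follow the approach of Pouzet, Kaddour, and Thatte: track a chain of strictly increasing $u$-values obtained by repeatedly passing from a factor to a small-index witness (which raises the $u$-value through the intermediate vertex $w_j$), and analyse carefully what the large-index witnesses force, using the parity relation $\sum_{i\in S}\mbf{u}_i\cdot\mbf{w}_j=0$ in each column to guarantee suitable witnesses at every stage. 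The step I expect to be hardest is showing that this process cannot terminate without closing up, so that it ultimately produces vertices $z_1<z_2<\dots<z_p<z_1$ in $T$, the desired contradiction.
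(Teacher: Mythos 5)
Your setup and your ``clean case'' are correct, but the proof has a genuine gap exactly where you say it does, and the gap is fatal to the route you sketch. In your hard case ($u_m$ is $<$-maximal among $\{u_i:i\in S\}$ and every witness of $m$ has larger factor index), the relation such a witness produces, $u_i<w_m$, is \emph{vacuous}: it already follows from $u_i<u_m<w_m$, so at the maximal factor your witness machinery yields no information whatsoever. Worse, the chain process you propose provably terminates without closing up: along your chain the $u$-values strictly increase, so no factor is ever revisited and a cycle $z_1<\dots<z_p<z_1$ can never be produced; the chain simply stalls at a factor all of whose witnesses have larger index, which is precisely the situation you cannot handle. So ``showing the process cannot terminate without closing up'' is not a hard last step but a false hope, and the appeal to Pouzet, Kaddour, and Thatte does not rescue it --- their argument (which the paper reformulates) is not a chain-chasing argument at all.

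The missing idea is to test the putative dependency against \emph{both} $\mbf{v}_m$ and $\mbf{w}_m$, not only $\mbf{w}_m$. Since $u_m$ is $<$-maximal, transitivity of $T$ gives $u_i<v_m$ and $u_i<w_m$ for every $i\in S$, and in $\hD$ the join edges $u_iv_m$ and $u_iw_m$ are parallel (both out of $u_i$ if $i<m$, both into $u_i$ if $i>m$). Hence for $i\in S\setminus\{m\}$ the two edges undergo the same net change, so by \cref{obs:atoms} we get $\mbf{u}_i\cdot\mbf{v}_m=\mbf{u}_i\cdot\mbf{w}_m$, whereas $\mbf{u}_m\cdot\mbf{v}_m=0\neq 1=\mbf{u}_m\cdot\mbf{w}_m$. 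Summing over $S$ yields
\[
\Bigl(\sum_{i\in S}\mbf{u}_i\Bigr)\cdot\mbf{v}_m\neq\Bigl(\sum_{i\in S}\mbf{u}_i\Bigr)\cdot\mbf{w}_m,
\]
so $\sum_{i\in S}\mbf{u}_i\neq\mbf{0}$. In your parity language: the number of reversed edges from $\{u_i:i\in S\}$ to $v_m$ and the number to $w_m$ differ by exactly one, so they cannot both be even. This disposes of all cases at once, with no case distinction on factor indices and no chain argument.
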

\begin{proof}
The statement is equivalent to the claim that for all non-empty $I\subseteq[k]$ we have $\sum_{i\in I}\mbf{u}_i\neq \mbf{0}$. Fix such an $I$ and note that it is sufficient to show that there exists some $\mbf{x}\in\F_2^k$ such that $(\sum_{i\in I}\mbf{u}_i)\cdot\mbf{x}\neq 0$. Let $m\in I$ be such that $u_i<u_m$ for all $i\in I\setminus\{m\}$. Note that $u_m<v_m,w_m$, so by the transitivity of $T$ we have $u_i<v_m,w_m$ for all $i\in I$. It is straightforward to deduce from this that for all $i\in I\setminus\{m\}$, the orientations of the edges $u_iv_m$ and $u_iw_m$ are either both unchanged after $X_1,\dots,X_k$ are inverted, or both reversed. By \cref{obs:atoms}, in other words we have $\mbf{u}_i\cdot\mbf{v}_m=\mbf{u}_i\cdot\mbf{w}_m$ for all $i\in I\setminus\{m\}$. On the other hand we have $\mbf{u}_m\cdot\mbf{v}_m=0$ while $\mbf{u}_m\cdot\mbf{w}_m=1$, so it follows by linearity of the dot product that $(\sum_{i\in I}\mbf{u}_i)\cdot\mbf{v}_m\neq (\sum_{i\in I}\mbf{u}_i)\cdot\mbf{w}_m$. One of these two dot products is thus non-zero, and we deduce that $\sum_{i\in I}\mbf{u}_i\neq \mbf{0}$, as required.
\end{proof}

We now build on \cref{claim:linindep} as follows.

\begin{claim}\label{claim:v=0_u=w}
Let $\ell\in[k]$ and suppose that the vectors $\mbf{u}_i,\mbf{v}_i,\mbf{w}_i$ for $\ell\leq i\leq k$ all lie in a subspace $V$ of $\F_2^k$ of dimension $k-\ell+1$. Then $\mbf{u}_\ell,\dots,\mbf{u}_k$ are orthonormal, and for all $\ell\leq i\leq k$ we have $\mbf{u}_i=\mbf{w}_i$ and $\mbf{v}_i=\mbf{0}$.
\end{claim}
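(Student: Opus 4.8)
The plan is to prove \cref{claim:v=0_u=w} by downward induction on $\ell$ (equivalently, by induction on $\dim V = k-\ell+1$), noting that taking $\ell=1$ makes the hypothesis vacuous and recovers \cref{lem:C3_kjoin}. For the base case $\ell=k$ the subspace $V$ is one-dimensional, say $V=\{\mbf{0},\mbf{z}\}$; since $\mbf{u}_k$ is nonzero by \cref{claim:linindep} we have $\mbf{u}_k=\mbf{z}$, and then $\mbf{u}_k\cdot\mbf{w}_k=1$ forces $\mbf{w}_k=\mbf{u}_k$ (so that $\mbf{u}_k\cdot\mbf{u}_k=1$ and $\mbf{u}_k$ is orthonormal) while $\mbf{u}_k\cdot\mbf{v}_k=0$ forces $\mbf{v}_k=\mbf{0}$.

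For the inductive step I would first locate the globally smallest vertex. After inverting $X_1,\dots,X_k$ the digraph $\hD$ is transitive, and since $u_i<v_i,w_i$ in each factor, the $<$-minimum vertex among factors $\ell,\dots,k$ is $u_m$ for some $m$, where $u_m$ is a source after inversion. The heart of the argument is to show that in fact $m=\ell$. Granting this, $u_\ell$ lies below every vertex of factors $\ell+1,\dots,k$, so by \cref{obs:atoms} $\mbf{u}_\ell$ is orthogonal to all of their characteristic vectors, and also $\mbf{u}_\ell\cdot\mbf{v}_\ell=0$ while $\mbf{u}_\ell\cdot\mbf{w}_\ell=1$. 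Expanding $\mbf{w}_\ell$ in the basis $\mbf{u}_\ell,\dots,\mbf{u}_k$ of $V$ and pairing with $\mbf{u}_\ell$ then yields $\mbf{u}_\ell\cdot\mbf{u}_\ell=1$. Hence the functional $\mbf{u}_\ell\cdot(-)$ is nonzero on $V$, so $V':=V\cap\mbf{u}_\ell^{\perp}$ has dimension $k-\ell$ and contains all characteristic vectors of factors $\ell+1,\dots,k$; applying the inductive hypothesis at $\ell+1$ gives that $\mbf{u}_{\ell+1},\dots,\mbf{u}_k$ are orthonormal with $\mbf{v}_i=\mbf{0}$ and $\mbf{w}_i=\mbf{u}_i$ for $i>\ell$. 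Together with the relations just derived, $\mbf{u}_\ell,\dots,\mbf{u}_k$ are orthonormal. It then remains to pin down factor $\ell$: each later factor $j$ now has order $u_j<v_j<w_j$, and for any $j>\ell$ one checks via \cref{obs:atoms} that $\mbf{v}_\ell\cdot\mbf{v}_j=0$ forces $v_\ell<v_j$ while $\mbf{v}_\ell\cdot\mbf{u}_j=\mbf{v}_\ell\cdot\mbf{w}_j$; placing $v_\ell$ above $u_j$ would then put it above $w_j$ as well, contradicting $v_\ell<v_j<w_j$. Thus $v_\ell$, and symmetrically $w_\ell$, lie below every later factor, and expanding in the now-orthonormal basis gives $\mbf{v}_\ell=\mbf{0}$ and $\mbf{w}_\ell=\mbf{u}_\ell$.

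I expect the main obstacle to be the claim that $m=\ell$, that is, that the global minimum sits in the lowest-indexed factor; this is precisely where the dimension hypothesis is essential, since the analogous statement can fail for arbitrary sub-configurations. My proposed route is by contradiction. If $m>\ell$ then, because all original cross-edges run from earlier factors into factor $m$ and $u_m$ becomes a source, \cref{obs:atoms} forces every characteristic vector of factors $\ell,\dots,m-1$, together with $\mbf{w}_m$, to lie in the coset $\{\mbf{x}\in V:\mbf{u}_m\cdot\mbf{x}=1\}$, while $\mbf{v}_m$ and every characteristic vector of the later factors lie in the complementary hyperplane $\mbf{u}_m^{\perp}\cap V$. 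Using that $\dim V=k-\ell+1$ is as small as possible (so these vectors satisfy no relations beyond the forced ones), together with the requirement that the inversions restrict to a decycling of the single $3$-cycle on factor $m$, one shows that the two non-source vertices $v_m,w_m$ cannot be consistently ordered against the earlier factors. I have verified this when $\dim V=2$: there the coset has only two elements, so two characteristic vectors of the earlier factor must coincide, and a short case analysis then shows $w_m$ has no valid position in the transitive order. Pushing this bookkeeping through in general — most plausibly by feeding the coset constraint into the induction hypothesis applied to a suitable sub-join — is the technical crux of the proof.
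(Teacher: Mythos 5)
There is a genuine gap, and it sits exactly where you place it: your inductive step is conditional on the claim that the $<$-minimal vertex among factors $\ell,\dots,k$ is $u_\ell$ (your ``$m=\ell$''), and you do not prove this claim in general. The verification you offer for $\dim V=2$ rests on pigeonhole: there the coset $\{\mbf{x}\in V:\mbf{u}_m\cdot\mbf{x}=1\}$ has only two elements, so two of $\mbf{u}_\ell,\mbf{v}_\ell,\mbf{w}_\ell$ must coincide. In general this coset has $2^{k-\ell}$ elements, while only $3(m-\ell)+1$ of the relevant vectors are constrained to lie in it, so for $k-\ell$ large no coincidence is forced and the case analysis never gets started. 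The fallback you suggest --- ``feeding the coset constraint into the induction hypothesis applied to a suitable sub-join'' --- also does not obviously work: the inductive statement requires the characteristic vectors of factors $\ell',\dots,k$ to lie in a subspace of dimension exactly $k-\ell'+1$, but after discarding factors $\ell,\dots,m$ the natural ambient space $V\cap\mbf{u}_m^{\perp}$ has dimension $k-\ell$, not $k-m$, and you cannot reduce $k$ itself because all $k$ inversion sets still act on the remaining factors. Moreover the phrase ``these vectors satisfy no relations beyond the forced ones'' assumes a genericity that the hypothesis does not provide; the dimension assumption only says the $\mbf{u}_i$ form a basis of $V$, via \cref{claim:linindep}.

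For comparison, the paper never proves $m=\ell$ directly; it emerges as a corollary. The paper takes $z$ to be the $<$-minimal vertex among $v_\ell,\dots,v_k,w_\ell,\dots,w_k$ only (excluding the $u_i$), writes $\mbf{z}=\sum_{i\in I}\mbf{u}_i$ in the basis $\mbf{u}_\ell,\dots,\mbf{u}_k$ of $V$, and reuses the extremal-index trick from \cref{claim:linindep}: choosing $m\in I$ with $u_m$ maximal (with separate cases $m\neq t$ and $m=t$, where $z\in\{v_t,w_t\}$), it pairs $\mbf{z}+\sum_{i\in I}\mbf{u}_i$ against $\mbf{v}_j$ and $\mbf{w}_j$ for a suitable index $j$ to show $I\in\{\emptyset,\{t\}\}$, and then rules out $\mbf{z}=\mbf{u}_t$ using the minimality of $z$. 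The conclusion $\mbf{z}=\mbf{0}$ forces $z=v_\ell$, and only then does it follow that $u_\ell$ is globally minimal. Your base case and your treatment of the rest of the inductive step once $m=\ell$ is granted (orthogonality of $\mbf{u}_\ell$ to the later factors, $\mbf{u}_\ell\cdot\mbf{u}_\ell=1$, passing to $V'=V\cap\mbf{u}_\ell^{\perp}$, and the final expansion giving $\mbf{v}_\ell=\mbf{0}$, $\mbf{w}_\ell=\mbf{u}_\ell$) are correct and essentially match the paper's; substituting the paper's minimal-vertex argument for your missing step would complete the proof.
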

\begin{proof}
We will prove the claim by reverse induction on $\ell$. In the $\ell=k$ case the claim follows easily from the fact that $\mbf{u}_k\cdot\mbf{w}_k=1$ while $\mbf{u}_k\cdot\mbf{v}_k=0$. Thus, let $\ell\leq k-1$ and write $[\ell,k]$ for $\{\ell,\ell+1,\dots,k\}$. Let $z$ be the $<$-minimal vertex among $v_\ell,\dots,v_k,w_\ell,\dots,w_k$. Write $\mbf{z}\in V\subseteq\F_2^k$ for the characteristic vector of $z$ in $X_1,\dots,X_k$ and let $t\in[\ell,k]$ be such that $z\in\{v_t,w_t\}$. By \cref{claim:linindep}, the vectors $\mbf{u}_\ell,\dots,\mbf{u}_k$ form a basis of $V$ so there exists $I\subseteq[\ell,k]$ such that $\mbf{z}+\sum_{i\in I}\mbf{u}_i=\mbf{0}$. 

First suppose that $I\not\in\{\emptyset,\{t\}\}$ and let $m\in I$ be such that $u_i<u_m$ for all $i\in I\setminus\{m\}$. If $m\neq t$, then we have $z<v_m,w_m$, so $\mbf{z}\cdot\mbf{v}_m=\mbf{z}\cdot\mbf{w}_m$ by \cref{obs:atoms}. As in the proof of \cref{claim:linindep}, we have $\mbf{u}_i\cdot\mbf{v}_m=\mbf{u}_i\cdot\mbf{w}_m$ for all $i\in I\setminus\{m\}$, but $\mbf{u}_m\cdot\mbf{v}_m\neq\mbf{u}_m\cdot\mbf{w}_m$, so $(\mbf{z}+\sum_{i\in I}\mbf{u}_i)\cdot\mbf{v}_m\neq (\mbf{z}+\sum_{i\in I}\mbf{u}_i)\cdot\mbf{w}_m$, and hence $\mbf{z}+\sum_{i\in I}\mbf{u}_i\neq \mbf{0}$. If $m=t$, then let $j\in I\setminus\{t\}$ and note that $z<v_{j},w_{j}$ by the minimality of $z$. Consequently, $\mbf{z}\cdot\mbf{v}_j=\mbf{z}\cdot\mbf{w}_j$. Moreover, since $u_m=u_t<z$, we have $u_m<v_{j},w_{j}$. From this it follows that $u_i<v_{j},w_{j}$ for all $i\in I$. Thus, $\mbf{u}_i\cdot\mbf{v}_{j}=\mbf{u}_i\cdot\mbf{w}_{j}$ for all $i\in I\setminus\{j\}$, while $\mbf{u}_{j}\cdot\mbf{v}_{j}\neq\mbf{u}_{j}\cdot\mbf{w}_{j}$. Hence, similarly to above, we have $(\mbf{z}+\sum_{i\in I}\mbf{u}_i)\cdot\mbf{v}_j\neq (\mbf{z}+\sum_{i\in I}\mbf{u}_i)\cdot\mbf{w}_j$, so $\mbf{z}+\sum_{i\in I}\mbf{u}_i\neq \mbf{0}$.

The remaining cases are $I=\emptyset$ and $I=\{t\}$, so we have $\mbf{z}\in\{\mbf{0},\mbf{u}_t\}$. Suppose that $\mbf{z}=\mbf{u}_t$. If $z=v_t$, then we have $\mbf{v_t}=\mbf{z}=\mbf{u_t}$, so $\mbf{v_t} \cdot \mbf{w_t}=\mbf{u_t} \cdot \mbf{w_t}=1$, i.e.\ the edge between $v_t$ and $w_t$ undergoes a net reversal under $X_1,\dots,X_k$. This would imply that $w_t<v_t=z$, which contradicts the minimality of $z$. Similarly, if $z=w_t$, then since the edge between $u_t$ and $v_t$ is not inverted, neither is the edge between $w_t$ and $v_t$, so $v_t<w_t=z$, another contradiction. Therefore $\mbf{z}=\mbf{0}$. This means no edges incident to $z$ are reversed when $X_1,\dots,X_k$ are inverted so by the minimality of $z$ we have $z=v_\ell$. 

We have shown that $\mbf{v}_\ell=\mbf{0}$, so the only vertex among the $u_i,v_i,w_i$ with $i\geq \ell$ which precedes $v_\ell$ in $<$ is $u_\ell$. It follows that $u_\ell$ is the least element among the $u_i,v_i,w_i$ with $i\geq \ell$. Hence, by \cref{obs:atoms} we have $\mbf{u}_\ell\cdot\mbf{u}_i=\mbf{u}_\ell\cdot\mbf{v}_i=\mbf{u}_\ell\cdot\mbf{w}_i=0$ for all $i\geq \ell+1$, so if $V'$ is the subspace of $V$ spanned by the $\mbf{u}_i,\mbf{v}_i,\mbf{w}_i$ with $i\geq \ell+1$, then $\mbf{u}_\ell\cdot\mbf{x}=0$ for all $\mbf{x}\in V'$. We have $\mbf{u}_\ell\cdot\mbf{w}_\ell=1$, so $V'$ is a proper subspace of $V$, but $\mbf{u}_{\ell+1},\dots,\mbf{u}_k\in V'$ are linearly independent, so we deduce that $V'$ has dimension $k-\ell$. Therefore by the induction hypothesis $\mbf{u}_{\ell+1},\dots,\mbf{u}_k$ are orthonormal, and we have $\mbf{u}_i=\mbf{w}_i$ and $\mbf{v}_i=\mbf{0}$ for $i\geq \ell+1$.

To complete the induction step it remains to show that $\mbf{u}_\ell=\mbf{w}_\ell$ and $\mbf{u}_\ell\cdot\mbf{u}_\ell=1$. The latter follows from the fact that $\mbf{u}_\ell,\dots,\mbf{u}_k$ is a basis for $V$ with $\mbf{u}_\ell\cdot\mbf{u}_i=0$ for all $i\geq \ell+1$, but $\mbf{w}_\ell\in V$ has $\mbf{u}_\ell\cdot\mbf{w}_\ell=1$. For the former, note that $\mbf{w}_\ell=\sum_{i\in I}\mbf{u}_i$ for some $I\subseteq [\ell,k]$ and by the established properties of the $\mbf{u}_i$ this set $I$ contains exactly those $i$ for which $\mbf{u}_i\cdot\mbf{w}_\ell=1$. Thus, we certainly have $\ell\in I$. Suppose that $\mbf{u}_i\cdot\mbf{w}_\ell=1$ for some $i\geq \ell+1$. Since $\mbf{w}_i=\mbf{u}_i$ and $\mbf{v}_i=\mbf{0}$, by \cref{obs:atoms} we find that the cycle $\vv{w_\ell v_iw_i}$ appears in $T$, which is a contradiction. Hence $I=\{\ell\}$ and $\mbf{u}_\ell=\mbf{w}_\ell$, as required.
\end{proof}
The lemma now follows from the $\ell=1$ case of \cref{claim:v=0_u=w}.
\end{proof}

We will now deduce \cref{thm:kjoin_char} from the lemma. In the proof, we will use the easy fact that every family of orthonormal vectors in $\F_2^k$ is linearly independent.

\begin{proof}[Proof of \cref{thm:kjoin_char}]
The sufficiency of the given conditions for $X_1,\dots,X_k$ to be a decycling family of $\hD$ is straightforward to verify using \cref{obs:atoms}. This observation also allows the `in particular' part of the theorem statement to be easily deduced from the preceding part. It remains to prove that the given conditions are necessary.

Given a decycling family $X_1,\dots,X_k$ of $\hD$, extend $\hD$ to a tournament $T$ for which $X_1, \dots, X_k$ is still a decycling family. For each $i$, let $T_i$ be the subtournament of $T$ induced on the vertex set of the $i$th factor of $\hD$. Since $D_i$ contains a directed cycle, so does $T_i$, and hence the latter contains a copy of $\vv{C_3}$. We can thus find a copy of $[\vv{C_3}]_k$ in $T$ whose $i$th factor is contained in $T_i$. It follows by \cref{lem:C3_kjoin} that there are orthonormal vectors $\mbf{u}_1, \dots, \mbf{u}_k\in \F_2^k$ and for each $i$ a triangle $\vv{u_iv_iw_i}$ in $T_i$ such that $u_i$ and $w_i$ have characteristic vector $\mbf{u}_i$ and $v_i$ has characteristic vector $\mbf{0}$ in $X_1, \dots, X_k$.

We next show that for all $i$, all vertices in $T_i$ have characteristic vector either $\mbf{u}_i$ or $\mbf{0}$ in $X_1,\dots,X_k$. Let $z \in V(T_i)$ and let $\mbf{z}$ be its characteristic vector. Since $\mbf{u}_1, \dots, \mbf{u}_k$ form a basis of $\F_2^k$, there exists $J\subseteq [k]$ such that $\mbf{z}=\sum_{j\in J}\mbf{u}_j$. If there exists $\ell \in J\setminus \{i\}$, then $\mbf{z} \cdot \mbf{u}_\ell=\mbf{u}_\ell \cdot \mbf{u}_\ell=1$ and hence the directions of the edge between $z$ and $u_\ell$ and the edge between $z$ and $w_\ell$ are reversed under $X_1, \dots, X_k$. If $\ell<i$, then the cycle $\vv{u_\ell v_\ell z}$ appears in $T$ and if $i<\ell$, then the cycle $\vv{z v_\ell w_\ell}$ appears in $T$. We have a contradiction in both cases, so $J=\emptyset$ or $J=\{i\}$ as desired. 

We have shown that all vertices in the $i$th factor of $\hD$ have characteristic vector either $\mbf{u}_i$ or $\mbf{0}$ in $X_1,\dots,X_k$. The effect on this copy of $D_i$ of inverting these sets in $\hD$ is therefore the same as inverting the set  of vertices with characteristic vector $\mbf{u}_i$, which we call $V_i$. The latter is therefore a decycling set for the $i$th factor of $\hD$. This completes the proof of the theorem.
\end{proof}

\Cref{thm:inv_kjoin} now follows easily.
\begin{proof}[Proof of \cref{thm:inv_kjoin}]
It is clear that the left-hand side of equation~\eqref{eq:thm_nested} is at most the right-hand side. For the reverse inequality, let $\hD=[D_1,\dots,D_k]$ and note that we may assume that none of the $D_i$ have inversion number $0$. Indeed, if $\inv(D_i)=0$ for some $i\geq 2$, then view $\hD$ as the $(k-1)$-join $[D_1,\dots,D_{i-2},D_{i-1}\ra D_i, D_{i+1},\dots,D_k]$ and, since $\inv(D_{i-1}\ra D_i)=\inv(D_{i-1})$, the result follows by induction on $k$. The case where $i=1$ can be handled similarly.

Thus, consider the case where $\inv(D_i)=1$ for all $i$ and suppose for a contradiction that $X_1,\dots,X_k$ is a decycling family of $\hD$ with $X_k=\emptyset$. By \cref{thm:kjoin_char} there exist $k$ orthonormal, and hence linearly independent, vectors in $\F_2^k$ each of which occurs as the characteristic vector of some vertex of $\hD$ in $X_1,\dots,X_k$. This contradicts the fact that all such vectors have a 0 in their final coordinate. Hence, in this case, $\inv(\hD)=k$.

It remains to check the case where $\inv(D_j)=2$ for some $j$ and $\inv(D_i)=1$ for all $i\neq j$. Start by letting $D'_j$ be a spanning subdigraph of the $j$th factor of $\hD$
with $\inv(D'_j)=1$, then define $\hD'$ to be the digraph obtained by replacing the $j$th factor of $\hD$ by $D'_j$. Assume for a contradiction that $X_1,\dots,X_k$ is a decycling family of $\hD$, in which case it is also a decycling family of $\hD'$. \cref{thm:kjoin_char} thus yields a vector $\mbf{u}_j \in \F_2^k$ with $\mbf{u}_j \cdot \mbf{u}_j = 1$ such that all the vertices in the $j$th factor of $\hD'$ (and hence also the $j$th factor of $\hD$) have characteristic vector either $\mbf{0}$ or $\mbf{u}_j$ in $X_1,\dots,X_k$. Inverting $X_1,\dots,X_k$ in~$\hD$ therefore has the same effect on its $j$th factor as inverting the set of vertices with characteristic vector $\mbf{u}_j$. It follows that this set of vertices is a decycling set for $D_j$, contradicting $\inv(D_j)=2$. 
\end{proof}

As mentioned in the introduction, it follows from \cref{thm:inv_kjoin} that for any digraph~$D$ we have $\inv(D)=1$ if and only if $\inv([D]_k)=k$, which in turn implies \cref{thm:NPcomplete} (which states that \textsc{$k$-Inversion} is NP-complete for all $k\in\N$). Indeed, \cref{thm:inv_kjoin} directly gives $\inv([D]_k)=k$ in the case $\inv(D)=1$, and if $\inv(D)=0$ then clearly $\inv([D]_k)=0$. If $\inv(D)>1$, then there are subdigraphs $D'$ and $D''$ of $D$ with $\inv(D')=1$ and $\inv(D'')=2$. The $k$-join $D''\ra[D']_{k-1}$, which has inversion number $k+1$ by \cref{thm:inv_kjoin}, is a subdigraph of $[D]_k$ and thus $\inv([D]_k)\geq k+1$ as required.

Finally, we deduce \cref{thm:2ra2} (which states that $\inv(L\ra R)=4$ for all digraphs $L$ and $R$ with inversion number $2$) from \cref{thm:inv_kjoin}. We will use the fact, shown in~\cite{bangjensen2022inversion}, that if $L$ and $R$ are strongly connected digraphs with $\inv(L),\inv(R)\geq 2$, then $\inv(L\ra R)\geq 4$.

\begin{proof}[Proof of \cref{thm:2ra2}]
Let $L$ and $R$ be digraphs with $\inv(L)=\inv(R)=2$. It is immediate that $\inv(L\ra R)\leq 4$, so it is sufficient to prove the lower bound. For this, extend $L\ra R$ to a tournament $T$ of the same inversion number and let the tournaments to which $L$ and $R$ are extended be $L'$ and $R'$ respectively. Note that $\inv(L'),\inv(R')\geq 2$ and $T$ is $L'\ra R'$.

Every tournament can be written as the $k$-join of its strongly connected components, so let $L'$ be $[L_1,\dots,L_{k_1}]$ and $R'$ be $[R_1,\dots,R_{k_2}]$ for some $k_1,k_2\in \N$ and strongly connected tournaments $L_1,\dots,L_{k_1},R_1,\dots,R_{k_2}$.
Since $\inv(L')\geq 2$, either there is some $L_i$ with $\inv(L_i)\geq 2$, or there are $i<j$ such that $\inv(L_i)=\inv(L_j)=1$. An analogous condition holds for $R'$. If there are $i$ and $j$ such that $\inv(L_i),\inv(R_j)\geq 2$, then since $T$ contains $L_i\ra R_j$, we have $\inv(T)\geq\inv(L_i\ra R_j)\geq 4$ by the above result of~\cite{bangjensen2022inversion}. Otherwise, either there exist $i<j$ such that $\inv(L_i)=\inv(L_j)=1$, in which case $\inv(T)\geq \inv([L_i,L_j,R])= 4$ by \cref{thm:inv_kjoin}, or there exist $i<j$ with $\inv(R_i)=\inv(R_j)=1$, in which case it follows similarly that $\inv(T)\geq 4$.
\end{proof}

\section{Complexity of \texorpdfstring{\textsc{$k$-Tournament-Inversion}}{k-Tournament-Inversion}}\label{sec:tournament_fpt}

In this section we prove \cref{thm:tournament_fpt} by constructing, for each fixed $k\in\N$, an algorithm solving \textsc{$k$-Tournament-Inversion} in time $O(\abs{V(T)}^2)$.
Our proof uses a technique known as iterative compression; see~\cite{downey2} for a description of this method and other applications of it.
The most involved part of our proof concerns the `compression step' of the algorithm. This step is handled by the following lemma, which roughly says that for constant $k$, given an $n$-vertex tournament $T_0$ and a decycling family of $T_0$ of constant size, in time linear in $n$ we can find a decycling family of $T_0$ of size $k$ if one exists. Throughout this section, we represent a total order $<$ on a finite set $S=\{s_1,\dots,s_m\}$ by the tuple $(s_1,\dots,s_m)$ where $s_1<\dots<s_m$.

\begin{lemma}\label{lem:tnmnt_inv}
Fix $k,s\in\mathbb{N}$. There is an algorithm which solves the following problem for $n$-vertex tournaments in time $O(n)$:

\begin{center}
\tcbox[
colback=white, colframe=black]{
\begin{tabular}{l}
\textsc{\underline{Inputs}:} \\  \tabitem a tournament $T_0$; \\
     \tabitem a decycling family $X_1, \dots, X_s$ of $T_0$ (transforming $T_0$ into $T$, say); \\
     \tabitem the order on $V(T_0)$ associated to $T$. \\
\textsc{\underline{Outputs}:} \\ \textsc{Either} \\ \tabitem that $T_0$ is not $k$-invertible; \\
 \textsc{Or} \\ \tabitem a decycling family $Y_1,\dots,Y_k$ of $T_0$ (transforming $T_0$ into $T'$, say); \\
     \tabitem the order on $V(T_0)$ associated to $T'$.
\end{tabular}
}
\end{center}
\end{lemma}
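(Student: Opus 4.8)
The plan is to exploit the fact that the input already encodes $T_0$ succinctly. Since inverting $X_1,\dots,X_s$ turns $T_0$ into the transitive tournament $T$, and inversion is an involution, $T_0$ is completely determined by the order $<$ associated to $T$ together with the sets $X_1,\dots,X_s$; in particular the orientation of any edge can be read off in $O(s)=O(1)$ time, so we never need to inspect the $\Theta(n^2)$ edges of $T_0$ explicitly (this is what makes an $O(n)$ running time conceivable). First I would compute the atoms of $X_1,\dots,X_s$, of which there are at most $2^s$, in time $O(n)$. Writing $\mbf{x}_v\in\F_2^s$ for the characteristic vector of $v$ in $X_1,\dots,X_s$, \cref{obs:atoms} gives that for $u<v$ the edge $uv$ points forwards in $T_0$ exactly when $\mbf{x}_u\cdot\mbf{x}_v=0$.

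The key reduction is to phrase the search for $Y_1,\dots,Y_k$ as a colouring problem. A family $Y_1,\dots,Y_k$ assigns to each vertex $v$ a vector $\mbf{y}_v\in\F_2^k$, and by \cref{obs:atoms} the tournament $T'$ obtained by inverting $Y_1,\dots,Y_k$ in $T_0$ satisfies, for $u<v$, that the edge $uv$ points forwards in $T'$ iff $\Phi(u)\cdot\Phi(v)=0$, where $\Phi(v)=(\mbf{x}_v,\mbf{y}_v)\in\F_2^{s+k}$ is the combined characteristic vector. Thus $Y_1,\dots,Y_k$ is a decycling family of $T_0$ if and only if this orientation is transitive, and the problem becomes: can we assign to each vertex a combined `colour' $\Phi(v)$, of which there are at most $2^{s+k}$, so that the resulting tournament on the linearly $<$-ordered vertex set is transitive? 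Since a tournament is transitive precisely when it contains no directed triangle, this holds iff for no triple $u<v<w$ do the dot products $(\Phi(u)\cdot\Phi(v),\,\Phi(v)\cdot\Phi(w),\,\Phi(u)\cdot\Phi(w))$ take one of the two `cyclic' values $(0,0,1)$ or $(1,1,0)$.

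I would then prove a structure theorem for feasible colourings. The triangle conditions force a rigid global shape: each combined colour class is internally ordered by $<$ or by its reverse according to whether $\Phi\cdot\Phi$ is $0$ or $1$, and any two colours whose mutual dot product is $1$ are heavily constrained in how their classes interleave in $<$ (for instance, two colours that are each orthogonal to themselves but have dot product $1$ with each other cannot interleave at all). Because there are only constantly many possible combined colours, this should yield a constant-size list of candidate shapes---essentially the admissible arrangements of the colour classes together with their internal orientations. For each shape in the list I would attempt to realise it by a single left-to-right scan of the vertices in the order $<$, assigning to each vertex the colour dictated by the shape and its $X$-atom and checking the interleaving and consistency constraints incrementally; each scan runs in $O(n)$ time, so the whole procedure is $O(n)$. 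If some shape is realised we recover $Y_1,\dots,Y_k$ from the colours and read off the order associated to $T'$; if none is, we report that $T_0$ is not $k$-invertible, which is correct because any genuine decycling family of size $k$ induces one of the enumerated shapes.

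The main obstacle is precisely this structure theorem and its algorithmic realisation. The delicate points are that a colour class need \emph{not} form an interval in either $<$ or the final order (mutually orthogonal colours may interleave freely), and that some classes are internally reversed; proving that the triangle constraints nonetheless restrict the global arrangement to constantly many shapes, each admitting a greedy conflict-checking scan that succeeds exactly when a corresponding decycling family exists, is where essentially all of the work lies. This is also where the constant implicit in the $O(n)$ bound becomes doubly exponential in $k$.
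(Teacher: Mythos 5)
Your reduction of the problem to a colouring question is sound and matches the paper's first step: working with combined characteristic vectors $\Phi(v)\in\F_2^{s+k}$, transitivity of $T'$ is exactly the absence of a triple $u<v<w$ whose three dot products form a cyclic pattern. But from that point on the proposal has a genuine gap: the entire algorithmic content rests on a ``structure theorem'' that you never state precisely, let alone prove, and which you yourself identify as the place where ``essentially all of the work lies''. As written there is no definition of a ``shape'', no argument that only constantly many shapes arise, and no proof that a left-to-right greedy scan realises a shape exactly when a decycling family of size $k$ exists. Your own observation that colour classes need not form intervals and that mutually orthogonal classes may interleave freely is precisely what makes it doubtful that any constant-size description of the \emph{global arrangement} of the classes exists; an argument whose correctness hinges on this unproved and only loosely formulated claim is not a proof.

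The paper sidesteps the need for any structural characterisation of feasible colourings. Its key idea is that one never needs to know how the colour classes interleave, only \emph{which ordered triples of colours occur}: for a colouring $\mbf{u}=(\mbf{u}_1,\dots,\mbf{u}_n)$ it defines the profile $B(\mbf{u})=\{(\mbf{u}_a,\mbf{u}_b,\mbf{u}_c)\colon a<b<c\}$, a subset of the set of at most $2^{3(s+k)}$ vector triples, and observes that transitivity of $T_{\mbf{u}}$ depends only on $B(\mbf{u})$ (it must contain no bad triple). Since there are at most $2^{2^{3(s+k)}}$ possible profiles, the algorithm can be a dynamic program: scanning the vertices in $T$-order, it maintains the set $\cB$ of all profiles realisable by colourings of the vertices seen so far, together with one witness colouring per profile; appending a new vertex with any of the $2^k$ admissible colours updates a profile in constant time, because the new triples are exactly those formed by a pair taken from an old triple followed by the new colour. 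This constructs $\cB$ for the whole tournament in time $O(n)$, after which one checks the boundedly many profiles for bad triples and recovers $Y_1,\dots,Y_k$ from a witness. (Recovering the order associated to $T'$, which you also leave undeveloped, is done by reversing the order inside the atoms whose internal edges flip and then merging the at most $2^{s+k}$ ordered atoms.) In effect, the profile $B(\mbf{u})$ is the correct formalisation of your ``shape'': it is a constant-size invariant that both determines transitivity and can be maintained incrementally, and supplying it--or something equivalent--is exactly what your proposal is missing.
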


We now use iterative compression to prove \cref{thm:tournament_fpt} before returning to \cref{lem:tnmnt_inv}.

\begin{proof}[Proof of \cref{thm:tournament_fpt}]
Fix $k\geq 1$. We will induct on $n$ to define an algorithm solving the following problem for $n$-vertex tournaments in time $C_k\cdot n^2$ for some constant $C_k$:

\begin{center}
\tcbox[
colback=white, colframe=black]{
\begin{tabular}{l}
\textsc{\underline{Input}:} \\ \tabitem a tournament $T_0$. \\
\textsc{\underline{Ouputs}:} \\ \textsc{Either} \\
                \tabitem that $T_0$ is not $k$-invertible;\\
                \textsc{Or} \\
                \tabitem a decycling family $Y_1,\dots,Y_k$ of $T_0$ (transforming $T_0$ into $T$, say); \\
                \tabitem the order on $V(T_0)$ associated to $T$. \\
\end{tabular} }
\end{center}
In particular, this algorithm solves \textsc{$k$-Tournament-Inversion}.

Fix $n\geq 2$ and assume that we have defined such an algorithm for all smaller tournaments. Let $T_0$ be an $n$-vertex tournament and pick some $v\in V(T_0)$. Applying the induction hypothesis, in time $C_k\cdot (n-1)^2$ we either find that $T_0-\{v\}$ is not $k$-invertible or we obtain a decycling family $X_1,\dots,X_k$ of $T_0-\{v\}$ and the order on $V(T_0)\setminus\{v\}$ associated to the transitive tournament obtained by inverting these sets in $T_0$. In the former case, it follows that $T_0$ is also not $k$-invertible and we can output that fact. In the latter case, let $A$ be the out-neighbourhood of $v$ in $T_0$, and define $X_{k+1}=A\cup\{v\}$ and $X_{k+2}=A$. Then $X_1,\dots,X_{k+2}$ is a decycling family of $T_0$, and we can obtain the order associated to the resulting transitive tournament by adding $v$ to the previous order as the maximal element. By \cref{lem:tnmnt_inv} we can now, in linear time, either find that $T_0$ is not $k$-invertible or obtain a decycling family $Y_1,\dots,Y_k$ of $T_0$ of size $k$ and the order associated to the transitive tournament obtained by inverting these sets in $T_0$. As required, this algorithm runs in time $C_k\cdot(n-1)^2+O(n)$, which is at most $C_k\cdot n^2$ if $C_k$ is large enough.
\end{proof}

It is left to prove \cref{lem:tnmnt_inv}. To this end, we describe an algorithm which explores what happens if, starting from $T$, we invert $X_1, \dots, X_s$ and~$k$ further sets $Y_1, \dots, Y_k$ to obtain a tournament $T_Y$, where $Y=(Y_1, \dots, Y_k)$. Since $T_Y$ is the tournament obtained by inverting $Y_1,\dots,Y_k$ in $T_0$, these $k$ sets are a decycling family of $T_0$ if and only if $T_Y$ is transitive. If we were to examine each possibility individually there would be too many for this exploration process to be tractable. However, the fact that we are starting from a transitive tournament $T$ makes it possible to identify cycles in the final tournament $T_Y$ without fully specifying the sets $Y_1,\dots,Y_k$. This means there are far fewer cases to consider, indeed few enough that the exploration process is linear in $n$ for fixed $k$ and $s$.

\begin{proof}[Proof of \cref{lem:tnmnt_inv}]
Fix $k,s\in\N$ and let $T_0$, $X_1,\dots,X_s$, and $T$ be as in the statement of the lemma. Let $n=\abs{V(T_0)}$ and label the vertices of $T_0$ as $u_1,\dots,u_n$ in $T$-increasing order. With notation as above, we wish to investigate for which $Y$ the tournament $T_Y$ is transitive. For each $Y$ we write $\mbf{u}_i\in\F_2^{s+k}$ for the characteristic vector of $u_i$ in $X_1,\dots,X_s,Y_1,\dots,Y_k$ (suppressing the dependence on $Y$ in the notation) and then let $\mbf{u}=(\mbf{u}_1,\dots,\mbf{u}_n)$. There is a bijective correspondence between $Y$ and $\mbf{u}$ and it will be more convenient to work with the latter, so let $T_{\mbf{u}}=T_Y$ and write $\cU$ for the set of all possible $\mbf{u}$. Our first aim is to determine in linear time whether there exists $\mbf{u}\in \cU$ such that $T_{\mbf{u}}$ is transitive, and to identify such a $\mbf{u}$ if so.

The tournament $T_{\mbf{u}}$ is transitive exactly when it contains no cyclic triples. It is straightforward to use \cref{obs:atoms} to show that this is equivalent to the condition that there are no $a<b<c$ in $[n]$ such that $\mbf{u}_a\cdot\mbf{u}_b=\mbf{u}_b\cdot\mbf{u}_c$ but $\mbf{u}_a\cdot\mbf{u}_b\neq \mbf{u}_a\cdot\mbf{u}_c$. We describe the triple $(\mbf{u}_a,\mbf{u}_b,\mbf{u}_c)$ as \emph{bad} if this occurs. Thus, $T_{\mbf{u}}$ is transitive if and only if $B(\mbf{u})=\{(\mbf{u}_a,\mbf{u}_b,\mbf{u}_c)\colon a<b<c\}$ contains no bad triples, and $T_0$ is $k$-invertible if and only if $\cB=\{B(\mbf{u})\colon \mbf{u}\in\cU\}$ contains a set which is free of bad triples. Our algorithm will construct this set $\cB$ and check whether any of its elements are free of bad triples. If one of these sets \emph{is} free of bad triples, then we need to be able to output a corresponding decycling family of $T_0$, so for each $B\in \cB$ we will also record some $\mbf{u}\in \cU$ for which $B=B(\mbf{u})$.

We will now explain how the above can be achieved in linear time. First note that we may assume that $n\geq 4$. Let $\cU'$ be the set of all possible vectors $\mbf{u}'=(\mbf{u_1},\dots,\mbf{u}_{n-1})$ of characteristic vectors of $u_1,\dots,u_{n-1}$ in $X_1,\dots,X_s,Y_1,\dots,Y_k$. For $\mbf{u}'\in\cU'$, let $B'(\mbf{u}')=\{(\mbf{u}_a,\mbf{u}_b,\mbf{u}_c)\colon 1\leq a<b<c\leq n-1\}$ and let $\cB'=\{B'(\mbf{u}')\colon \mbf{u}'\in\cU'\}$. We may assume inductively that there is a constant $C$ depending only on $k$ and $s$ such that in time $C\cdot (n-1)$ we can construct $\cB'$ and associate to each $B'\in \cB'$ some $\mbf{u}'\in\cU'$ such that $B'=B'(\mbf{u}')$. For the induction step, we need to show that we can use this to obtain in time $C$ the set $\cB$ and for each $B\in \cB$ some $\mbf{u}\in\cU$ such that $B=B(\mbf{u})$.

The key observation is that there are only $2^{s+k}$ possible characteristic vectors for each of $u_1,\dots,u_n$, so the number of triples of characteristic vectors is at most $2^{3(s+k)}$ and the sizes of $\cB$ and $\cB'$ are at most $2^{2^{3(s+k)}}$. In particular, there are only boundedly many pairs $(B',\mbf{u}_n)$ where $B'\in \cB'$ and $\mbf{u}_n$ is a possible characteristic vector for $u_n$. For each such pair, we can construct in bounded time the set $S(B',\mbf{u}_n)$ consisting of all triples in $B'$, and all triples of the form $(\mbf{v}_i,\mbf{v}_j,\mbf{u}_n)$ for $(\mbf{v}_1,\mbf{v}_2,\mbf{v}_3)\in B'$ and $1\leq i<j\leq 3$. It is not hard to see that $\cB$ equals the set of all sets $S(B',\mbf{u}_n)$ and that each $S(B',\mbf{u}_n)$ can be associated with the $\mbf{u}\in\cU$ formed by appending $\mbf{u}_n$ to the $\mbf{u'}\in\cU'$ associated with $B'$. Indeed, given $B'$ and $\mbf{u}_n$ and defining $\mbf{u}$ as in the previous sentence, since $n \geq 4$, we have $S(B',\mbf{u}_n)=B(\mbf{u})$. For the other direction, given $\mbf{u}=(\mbf{u_1},\dots,\mbf{u}_n)\in\cU$ and letting $\mbf{u'}=(\mbf{u}_1,\dots,\mbf{u}_{n-1})$, we have $B(\mbf{u})=S(B'(\mbf{u'}), \mbf{u}_n)$.

We can construct this set in bounded time and then forget about all but one of the elements of $\cU$ associated to each $B\in \cB$. Thus, in constant time we have obtained $\cB$ and for each $B\in\cB$ some $\mbf{u}\in\cU$ such that $B=B(\mbf{u})$, and the induction continues.

Once we have constructed $\cB$ in linear time, since it has bounded size we can check whether any of its members is free of bad triples in bounded time. If not, then $T_0$ is not $k$-invertible. If so, then pick $B\in\cB$ with no bad triples and use the $\mbf{u}$ associated to it to construct a decycling family $Y_1,\dots,Y_k$ of $T_0$. Let $T'$ be the transitive tournament obtained by inverting these sets in~$T_0$.

It remains to show that we can obtain the order on $V(T_0)$ associated to $T'$ in linear time. Inverting the sets $X_1,\dots,X_s,Y_1,\dots,Y_k$ transforms $T$ into $T'$, and we have the characteristic vector of each vertex in these sets as well as the order on the vertices associated to $T$. We can therefore in linear time obtain the atoms of these $s+k$ inversions and for each atom $A$ the restriction to $A$ of the order associated to $T$. By reversing the order on each atom whenever the edges within it undergo a net reversal under the inversions, we obtain the order on that atom associated to $T'$. The $T'$-minimal vertex is now the minimal vertex of one of the atoms under their current orderings. There are at most $2^{s+k}$ atoms so we can identify the $T'$-minimal vertex in constant time. After deleting this vertex from its atom, the second smallest vertex according to $T'$ is one of the new minimal vertices of the atoms so can be found in constant time again. Continuing in this way we can obtain the full ordering in linear time, as required.
\end{proof}

Note that the implicit constant in the running time given by this proof is doubly exponential in $s+k$.

\section{Cycle transversals}\label{sec:tau}
In this section we will prove \cref{thm:tau}, constructing for each $k\in\N$ a digraph $D$ with $\tau(D)=k$ and $\inv(D)=2k$. We will use the so-called Eventown theorem, proved independently by Berlekamp~\cite{berlekamp_1969} and Graver~\cite{GRAVER1975111}.

\begin{theorem}[Eventown~\cite{berlekamp_1969},~\cite{GRAVER1975111}]
\label{thm:eventown}
Let $n\in\N$ and let $\mathcal{F}\subseteq\mathcal{P}([n])$ be a family of subsets of $[n]$ such that $\abs{F_1\cap F_2}$ is even for all $F_1,F_2\in\mathcal{F}$. Then $\abs{\mathcal{F}}\leq 2^{\floor{n/2}}$.
\end{theorem}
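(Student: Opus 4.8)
The plan is to use the standard linear-algebra dimension method over $\F_2$. First I would encode each set $F\in\mathcal{F}$ by its characteristic vector $\mbf{v}_F\in\F_2^n$ (with a $1$ in coordinate $j$ exactly when $j\in F$), so that the standard bilinear form satisfies $\mbf{v}_{F_1}\cdot\mbf{v}_{F_2}\equiv\abs{F_1\cap F_2}\pmod 2$. The hypothesis, applied in particular with $F_1=F_2$ (which forces every $\abs{F}$ to be even), then says exactly that $\mbf{v}_{F_1}\cdot\mbf{v}_{F_2}=0$ for all $F_1,F_2\in\mathcal{F}$.

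Next I would pass to the subspace $W=\operatorname{span}_{\F_2}\{\mbf{v}_F:F\in\mathcal{F}\}\subseteq\F_2^n$. Since the form vanishes on a spanning set of $W$, bilinearity immediately gives $\mbf{x}\cdot\mbf{y}=0$ for all $\mbf{x},\mbf{y}\in W$; that is, $W$ is totally isotropic, so $W\subseteq W^{\perp}$. The key step is then to bound the dimension of such a self-orthogonal subspace. Because the standard dot product on $\F_2^n$ is nondegenerate, the map $\F_2^n\to W^{*}$ sending $\mbf{y}$ to the functional $\mbf{x}\mapsto\mbf{x}\cdot\mbf{y}$ is surjective with kernel $W^{\perp}$, so $\dim W^{\perp}=n-\dim W$. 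Combining this with $W\subseteq W^{\perp}$ yields $\dim W\leq n-\dim W$, and hence $\dim W\leq\floor{n/2}$.

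Finally, since the vectors $\mbf{v}_F$ are pairwise distinct elements of $W$ (distinct sets give distinct characteristic vectors), we conclude $\abs{\mathcal{F}}\leq\abs{W}=2^{\dim W}\leq 2^{\floor{n/2}}$, as required. Note that no linear independence of the $\mbf{v}_F$ is needed: the count is controlled simply by the size of their span.

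The only delicate point I anticipate is the dimension identity $\dim W+\dim W^{\perp}=n$ over $\F_2$. Unlike over $\R$, the standard form here is not an inner product (there is no positive-definiteness, and indeed $W\cap W^{\perp}$ can be nonzero), so one cannot argue via orthogonal complements in the Euclidean sense. What makes the argument work is solely the \emph{nondegeneracy} of the form on $\F_2^n$, which is what guarantees the rank computation above; I would make sure to invoke exactly that property rather than any orthogonal-decomposition statement that fails in characteristic $2$.
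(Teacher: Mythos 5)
Your proof is correct. Note, however, that the paper does not prove this statement at all: it is the Eventown theorem of Berlekamp and Graver, imported as a citation and used as a black box in the proof of \cref{thm:tau}, so there is no in-paper argument to compare yours against. What you give is the standard linear-algebra proof of the bound: the span $W$ of the characteristic vectors is totally isotropic for the standard bilinear form on $\F_2^n$, nondegeneracy gives $\dim W^{\perp}=n-\dim W$, hence $\dim W\leq \floor{n/2}$, and $\abs{\mathcal{F}}\leq\abs{W}\leq 2^{\floor{n/2}}$. All steps are sound, including the two points you rightly flag: applying the hypothesis with $F_1=F_2$ is what forces each $\mbf{v}_F$ to be self-orthogonal, and the identity $\dim W+\dim W^{\perp}=n$ needs only nondegeneracy of the form, not any orthogonal-complement decomposition (which indeed fails in characteristic $2$, as $W\cap W^{\perp}$ can be large). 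Bounding $\abs{\mathcal{F}}$ by $\abs{W}$ rather than by linear independence of the $\mbf{v}_F$ is also the right move, since those vectors need not be independent.
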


For a digraph $D$ and vertices $u,v,w\in V(D)$, we will say that $u$ and $v$ \emph{differ} on $w$ if either $\vv{uw},\vv{wv}\in E(D)$ or $\vv{vw},\vv{wu}\in E(D)$. We are now ready to prove the theorem.

\begin{proof}[Proof of \cref{thm:tau}]
Fix $k\in\N$ and let $n\in\N$ be large and divisible by $2^k$. We will define a digraph $D$ on vertex set $\{u_0,\dots,u_{k-1},v_0,\dots,v_{n-1}\}$ and then show that it satisfies the conditions of the theorem. Start by including all directed edges $\vv{v_iv_j}$ for $i<j$, so that the subdigraph of $D$ induced on $\{v_0,\dots,v_{n-1}\}$ is a transitive tournament. For $i\in \{0,\dots,k-1\}$ and $j\in\{0,\dots,n-1\}$, add the edge $\vv{u_iv_j}$ if in the binary expansion of~$j$, the digit in the $2^i$ place is a $0$, and add the edge $\vv{v_ju_i}$ otherwise. For ease of exposition we will not include any edges among the $u_i$ (though including any combination of such edges would still give a valid construction), so this completes the definition of $D$. As noted above, the removal of the vertices $u_0,\dots,u_{k-1}$ from $D$ leaves an acyclic digraph, so $\tau(D)\leq k$. 

It remains to show that $\inv(D)\geq 2k$, as then $\inv(D)=2 \tau(D)=2k$ follows from $\inv(D)\leq 2 \tau(D)$. Suppose for a contradiction that $X_1,\dots,X_{2k-1}\subseteq V(D)$ form a decycling family of $D$ and let $D'$ be the acyclic digraph obtained by inverting these sets in~$D$.
Consider the characteristic vectors of $v_0,\dots,v_{n-1}$ in $X_1,\dots,X_{2k-1}$, which we will denote by $\mbf{v}_0,\dots,\mbf{v}_{n-1}\in\F_2^{2k-1}$ respectively. Let $K=2^k$. By the pigeonhole principle, if $n$ is large enough, then there exist distinct $i,i'\in \{0,\dots,n/K-1\}$ such that \[(\mbf{v}_{iK},\mbf{v}_{iK+1},\dots,\mbf{v}_{(i+1)K-1})=(\mbf{v}_{i'K},\mbf{v}_{i'K+1},\dots,\mbf{v}_{(i'+1)K-1}).\] We may assume that $i=0$ and $i'=1$.

We will show that $\mbf{v}_0,\dots,\mbf{v}_{K-1}$ are pairwise distinct and that $\mbf{v}_i\cdot\mbf{v}_j$ is constant as $i,j\in\{0,\dots,K-1\}$ vary. We claim that these conditions force a contradiction. Indeed, in the case where $\mbf{v}_i\cdot\mbf{v}_j=0$ for all $i,j$, we have that the $\mbf{v}_i$ are indicator vectors of pairwise distinct subsets of $[2k-1]$ which each have even size, and each pair of which have even intersection. By Eventown, every such collection has at most $2^{(2k-1)/2}<2^k=K$ members, giving the required contradiction. On the other hand, if $\mbf{v}_i\cdot\mbf{v}_j=1$ for all $i,j$, then consider the `complement' vectors $\mbf{w}_0,\dots,\mbf{w}_{K-1}$, which have $1$'s where the $\mbf{v}_i$ have $0$'s and $0$'s where the $\mbf{v}_i$ have $1$'s. It is straightforward to use the fact that the vectors have odd length to show that these $\mbf{w}_i$ are pairwise distinct and satisfy $\mbf{w}_i\cdot\mbf{w}_j=0$ for all $i,j$, from which we can derive a contradiction as above.

We continue by showing that the vectors $\mbf{v}_0,\dots,\mbf{v}_{K-1}$ are pairwise distinct, which is equivalent to showing that each of $v_0,\dots,v_{K-1}$ is in a different atom. Suppose for a contradiction that $v_i$ and $v_j$ are in the same atom for some $i<j$ in $\{0,\dots,K-1\}$, and note that $v_{K+i}$ and $v_{K+j}$ are in this atom too by assumption. By the construction of $D$ there is some $\ell\in\{0,\dots,k-1\}$ such that $v_i$ and $v_j$ differ on $u_\ell$ in $D$. Since they are in the same atom as each other, they also differ on $u_\ell$ in $D'$. If $\vv{v_iu_\ell},\vv{u_\ell v_j}\in E(D')$, then to avoid a cyclic triple in $D'$ we have $\vv{v_iv_j}\in E(D')$. This means that the edges within $v_i$ and $v_j$'s atom have the same orientations in $D$ as they do in $D'$, so in particular we have $\vv{v_jv_{K+i}}\in E(D')$. Moreover $v_i$ and $v_{K+i}$ are in the same atom and are either both in-neighbours of $u_\ell$ in $D$ or both out-neighbours, so since $\vv{v_iu_\ell}\in E(D')$ we also have $\vv{v_{K+i}u_\ell}\in E(D')$. Hence, the cycle $\vv{v_jv_{K+i}u_\ell}$ appears in $D'$. Similarly if $\vv{v_ju_\ell},\vv{u_\ell v_i}\in E(D')$, then we have $\vv{v_jv_i}\in E(D')$. In this case the edges within the atom of $v_i$ and $v_j$ switch orientation between $D$ and $D'$, so the cycle $\vv{v_ju_\ell v_{K+i}}$ appears in $D'$. In both cases we have the desired contradiction, and we deduce that the vertices $v_0,\dots,v_{K-1}$ are all in different atoms.

It remains to show that $\mbf{v}_i\cdot\mbf{v}_j$ is constant as $i,j\in\{0,\dots,K-1\}$ vary. Suppose for a contradiction that this is not the case, then there exists $i\in\{0,\dots,K-1\}$ such that $\mbf{v}_i\cdot\mbf{v}_j$ is not constant as $j\in\{0,\dots,K-1\}$ varies. For such $i$ we can pick $j\in\{0,\dots,K-1\}$ such that $\mbf{v}_i\cdot\mbf{v}_i\neq\mbf{v}_i\cdot\mbf{v}_j$. Now if $\mbf{v}_i\cdot\mbf{v}_i=0$, then $\mbf{v}_i\cdot\mbf{v}_j=1$ so by \cref{obs:atoms}, $D'$ contains the cycle $\vv{v_{K+i}v_jv_i}$ if $i<j$ or the cycle $\vv{v_{K+i}v_{K+j}v_i}$ if $i>j$. Similarly if $\mbf{v}_i\cdot\mbf{v}_i=1$, then $D'$ contains one of the cycles $\vv{v_{i}v_jv_{K+i}}$ and $\vv{v_iv_{K+j}v_{K+i}}$. We have a contradiction in all cases, so the value of $\mbf{v}_i\cdot\mbf{v}_j$ is constant as $i,j\in\{0,\dots,K-1\}$ vary, as required.
\end{proof}

\section{Bounds on \texorpdfstring{$\inv(n)$}{inv(n)}}\label{sec:inv(n)}

\subsection{Lower bounds}\label{subsec:lower_bounds}

In this section we discuss the previous known lower bound on $\inv(n)$ and give the proof of \cref{thm:inv_lower}. As noted in the introduction, Belkhechine, Bouaziz, Boudabbous, and Pouzet~\cite{BELKHECHINE2010703} used a counting argument to lower bound $\inv(n)$. They observed that since there are $n!$ labelled transitive tournaments on $n$ vertices, there are at most $n!\cdot2^{n(k-1)}$ labelled $(k-1)$-invertible tournaments on $n$ vertices. There are a total of $2^{n(n-1)/2}$ labelled $n$-vertex tournaments, so for any $k$ such that $2^{n(n-1)/2}>n!\cdot2^{n(k-1)}$ we have $\inv(n)\geq k$. Taking logarithms base 2 and rearranging, this condition becomes $k<(n-1)/2-\log(n!)/n$, so we have
\[\inv(n)\geq \floor{\frac{n-1}{2}-\frac{\log(n!)}{n}}\geq \floor{\frac{n-1}{2}-\log(n)},\]
where for the final inequality we used $n!\leq n^n$. Lower bounds on $\inv(n)$ of this form were the best known (disregarding very slight tightenings of the argument).

The proof of \cref{thm:inv_lower} uses the following lemma which gives a bound on the probability that a random symmetric binary matrix has at most a certain rank.   
In fact, these probabilities are known exactly~\cite{Mac}, but we will use a simpler bound which is essentially tight for our purposes and for which we include a short proof.

\begin{lemma}\label{lem:inv_lower}
The probability that a uniformly random
$n\times n$ symmetric matrix over $\F_2$ has rank at most
$n-s$ (over $\F_2$) is at most $2^{s \log(n)-\binom{s}{2}}$.
\end{lemma}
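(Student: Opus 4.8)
The plan is to bound the number of symmetric matrices of low rank directly and divide by the total count $2^{\binom{n+1}{2}}$ of symmetric matrices over $\F_2$. The key structural fact I would use is that a symmetric matrix $M$ over $\F_2$ of rank $r$ can be described by a relatively small amount of data: its row space is an $r$-dimensional subspace, and $M$ is determined by which rows lie in a chosen basis together with the symmetric form it induces. More concretely, I would try to show that a symmetric $n\times n$ matrix of rank exactly $r$ is determined by a choice of $r$ linearly independent columns (indexed by some $R\subseteq[n]$ with $\abs R=r$), the submatrix on the rows and columns in $R$ (which is itself symmetric and invertible), and the remaining columns, each of which must be an $\F_2$-linear combination of the $R$-columns. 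Crucially, symmetry forces the coefficients expressing the other columns in terms of the $R$-columns to be pinned down by the already-chosen entries, so they do not contribute extra freedom.

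The cleanest way to carry this out is to fix the pivot set $R$ of size $r=n-s$: there are $\binom{n}{r}\le n^{s}$ choices (this is where the $n^{\log}$-type factor, i.e.\ the $2^{s\log n}$ term, should come from, since $\binom{n}{s}\le n^s$ and $r=n-s$). Given $R$, the symmetric invertible $r\times r$ principal submatrix can be chosen freely, contributing at most $2^{\binom{r+1}{2}}$ possibilities. Every column outside $R$ is then a linear combination of the columns indexed by $R$; writing such a column as $M\mbf{x}$ for some coefficient vector $\mbf{x}\in\F_2^{r}$ requires specifying $\mbf{x}$, giving at most $2^{r}$ choices per column and $2^{r(n-r)}=2^{rs}$ over all $s$ columns outside $R$. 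This already overcounts, and the main point of the argument is that symmetry makes this bound loose in exactly the right way: once the principal block and the off-diagonal coefficient data are fixed, symmetry determines the full matrix, so I would want to verify that the genuine count of symmetric matrices with pivot set $R$ is at most $2^{\binom{r+1}{2}+rs}$ with no further factors. Assembling these, the number of symmetric matrices of rank at most $n-s$ is at most $n^{s}\cdot 2^{\binom{r+1}{2}+rs}$, and dividing by $2^{\binom{n+1}{2}}$ and simplifying $\binom{n+1}{2}-\binom{r+1}{2}-rs$ should collapse to exactly $\binom{s}{2}$, yielding the claimed bound $2^{s\log n-\binom{s}{2}}$.

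The step I expect to be the main obstacle is getting the symmetric count exactly right rather than off by a factor that degrades the $\binom{s}{2}$ in the exponent. The naive counting sketched above treats pivot columns and free coefficients as independent, but for a \emph{symmetric} matrix these interact: once the principal $R$-block $M_{RR}$ and the coefficients $\mbf{x}$ for the non-pivot columns are chosen, the off-diagonal blocks are forced, and one must check that no inconsistency or double counting arises, and that $M_{RR}$ being invertible is both necessary and the right normalisation. I would handle this by parametrising: write $M$ in block form with respect to $R$ and its complement, observe that $M_{RR}$ invertible forces the complementary block to be $M_{RR}^{-1}$-conjugate of the cross block, and count the free parameters as the entries of $M_{RR}$ (the $\binom{r+1}{2}$ term) together with the cross block $M_{R,\bar R}$ (the $rs$ term), with the bottom-right block then determined by symmetry and the rank condition. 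Verifying that this parametrisation is a bijection onto rank-$r$ symmetric matrices with a fixed ordered pivot structure, and that summing the geometric-type series over $s'\ge s$ does not spoil the leading term, is the delicate bookkeeping; the arithmetic identity $\binom{n+1}{2}-\binom{n-s+1}{2}-(n-s)s=\binom{s}{2}$ is the routine check that confirms the exponent comes out as stated.
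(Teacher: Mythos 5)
Your route---directly counting low-rank symmetric matrices---is genuinely different from the paper's proof, which reveals the matrix one row/column at a time and takes a union bound over the $\binom{n}{s-1}$ possible positions of the $s-1$ nullity increments, each increment costing a factor $2^{-j}$; that argument handles the event ``nullity at least $s$'' in one stroke, with no summation over ranks. Your structural core is sound: a symmetric matrix $M$ of rank $r$ over $\F_2$ does have an invertible $r\times r$ principal submatrix (pick $R$ with the $R$-rows independent; by symmetry the $R$-columns are then independent, hence span the column space, and so $\rk(M_{RR})\geq \rk$ of the $R$-row block $=r$), and for fixed $R$ the matrix is determined by $M_{RR}$ and the cross block via the Schur relation, giving at most $\binom{n}{s}\cdot 2^{\binom{r+1}{2}+rs}$ symmetric matrices of rank exactly $r=n-s$ (overcounting over choices of $R$ is harmless for an upper bound). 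One arithmetic correction: $\binom{n+1}{2}-\binom{n-s+1}{2}-(n-s)s=\binom{s+1}{2}$, not $\binom{s}{2}$. This slip is in your favour, and the extra $s$ in the exponent turns out to be exactly what you need below.

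The genuine gap is the final summation over ranks, which you defer as ``delicate bookkeeping'': it is not a decaying geometric series in general. Your per-rank bound is $2^{s'\log(n)-\binom{s'+1}{2}}$, and the ratio of consecutive terms is $n\cdot 2^{-(s'+1)}$, which exceeds $1$ whenever $s'<\log(n)-1$. So for $s$ well below $\log(n)$ the sum over $s'\geq s$ is dominated by terms with $s'\approx\log(n)$, of size roughly $2^{(\log n)^2/2}$, which dwarfs the target $2^{s\log(n)-\binom{s}{2}}$; summing your per-rank bounds naively proves nothing in that range. The rescue is that the lemma is vacuous in exactly that range: $2^{s\log(n)-\binom{s}{2}}\geq 1$ precisely when $s\leq 2\log(n)+1$, so there is nothing to prove there. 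For $s>2\log(n)+1$ every ratio $n\cdot 2^{-(s'+1)}$ is at most $1/(4n)$, so the sum is at most twice its first term, i.e.\ at most $2^{s\log(n)-\binom{s+1}{2}+1}\leq 2^{s\log(n)-\binom{s}{2}}$, using $\binom{s+1}{2}-\binom{s}{2}=s\geq 1$. With that case split added (and the identity corrected) your proof is complete; without it, the last step as you describe it fails. This is the price of the counting approach relative to the paper's sequential one, which never has to sum over ranks.
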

\begin{proof}
Construct the random matrix in $n$ steps, in the $i$th step choosing the first $i$ entries of the $i$th row of the matrix (and also, by symmetry, the $i$th column). For each $i\in[n]$, let $M_i$ be the random symmetric $i\times i$ matrix obtained after step $i$.

Note that for each $i$ the nullity increases by at most 1 between $M_i$ and $M_{i+1}$. It follows that if the nullity of $M_n$ is at least $s$, then for all $1 \leq j\leq s-1$ we can define $k_j$ to be the smallest $i$ such that the nullity of $M_i$ is $j+1$, and we have $2\leq k_1<k_2<\dots<k_{s-1}\leq n$.
For each $j$, the ranks of $M_{k_j-1}$ and $M_{k_j}$ are equal, so the first $k_j-1$ entries of the $k_j$th row of $M_{k_j}$ lie in the $(k_j-1-j)$-dimensional row space of $M_{k_j-1}$, which happens with probability $2^{-j}$. There are $\binom{n}{s-1}$ ways to choose $k_1,\dots,k_{s-1}$ as above, so the probability that $M_n$ has rank at most $n-s$ is at most
\[
\binom{n}{s-1}\prod_{j=1}^{s-1} 2^{-j} \leq  2^{s \log(n)-\binom{s}{2}},
\]
as required.
\end{proof}

We are now ready to prove \cref{thm:inv_lower}.

\begin{proof}[Proof of \cref{thm:inv_lower}]
Let $T$ be a uniformly random tournament on vertex set
$[n]$ and let $M_T=(m_{ab})$ be the $n \times n$ matrix over $\F_2$ defined as follows.
For $a<b$, let $m_{ab}$ be $0$ if $\vv{ab}$ is an edge of $T$ and 
$1$ otherwise, then define $m_{ba}=m_{ab}$, and finally choose each 
diagonal entry uniformly at random. Note that the $\binom{n}{2}$ entries of $M_T$ above the diagonal determine $T$, and the other entries are defined such that $M_T$ is a uniformly random symmetric binary matrix.

Let $s=\floor{\sqrt{2 n \log(n)}}$ and write $k=n-s$. Suppose that $\inv(T)\leq k$ and let $X_1, \ldots ,X_k$ be a decycling family of $T$. For each $X_i$, let $M_i$ be the $n\times n$ binary matrix whose $(a,b)$ entry is $1$ if and only if $a,b\in X_i$. Observe that, working over $\F_2$, we have $\rk(M_i)\leq 1$ for all $i$, and thus $\rk(\sum_i M_i) \leq k$. 
By construction, $M_T+\sum_i M_i$ is a matrix whose entries above the diagonal correspond to a transitive tournament on $[n]$ (its diagonal entries can be anything). Let $\cM$ be the set of binary matrices corresponding in this manner to a transitive tournament on $[n]$, and note that $\abs{\cM}=n!2^n$.

Putting all of this together, we have that if $\inv(T)\leq k$, then there exists $M\in\cM$ such that $\rk(M_T+M)\leq k$. For each fixed $M$, we have that $M_T+M$ is a uniformly random symmetric binary matrix and hence has rank at most $k$ with probability at most $2^{s \log(n)-\binom{s}{2}}$ by \cref{lem:inv_lower}.
Taking a union bound over all $M \in \cM$ we obtain
\[
\bP(\inv(T)\leq k)  \leq n! 2^n 2^{s \log(n) -\binom{s}{2}}.
\]
Since $n! =O(\sqrt{n}(n/e)^n)$, the right-hand side is $O(2^{f(n)})$ where
\begin{align*}
f(n)&= \frac{\log(n)}{2} + n \log(n) - n \log(e) + n + s \log(n) - \binom{s}{2}
\\ &= - n (\log(e)-1) +o(n),
\end{align*}
 and thus $\bP(\inv(T)\leq k) \to 0$ as $n \to \infty$
as desired.
\end{proof}

\subsection{Upper bounds}\label{subsec:upper_bounds}

The only approach which has been used to prove upper bounds on $\inv(n)$, introduced in~\cite{BELKHECHINE2010703}, is to `solve' one vertex at a time, as follows. Given a tournament $T$, pick a vertex $v$ and invert the set consisting of $v$ and its out-neighbourhood. In the resulting tournament $T_1$, $v$ is a sink. Using a further $\inv(n-1)$ inversions we can transform $T_1-\{v\}$ into a transitive tournament, so $\inv(n)\leq \inv(n-1)+1$ for all $n\geq 2$. The authors of~\cite{BELKHECHINE2010703} observed that $\inv(4)=1$, so $\inv(n)\leq n-3$ for $n\geq 4$. For $n\geq 6$ this was improved by~1 in~\cite{bangjensen2022inversion} using the fact that $\inv(6)= 2$ (which they attribute to~\cite{BELKHECHINE_unpublished} and which we have verified by a computer check). We introduce a slightly different approach to prove the following.
\begin{prop}\label{prop:inv_ub}
For all $n\in\N$, \[\inv(n)\leq \floor{\frac{n-1}{2}}+\inv\left(\ceil{\frac{n-1}{2}}\right).\]
\end{prop}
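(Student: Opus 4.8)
The plan is to prove the single inequality $\inv(T)\le \floor{\frac{n-1}{2}}+\inv\!\left(\ceil{\frac{n-1}{2}}\right)$ for an \emph{arbitrary} $n$-vertex tournament $T$, which yields the proposition on maximising over $T$. Write $j:=\floor{\frac{n-1}{2}}$ and $m:=\ceil{\frac{n-1}{2}}$, so that $j+m=n-1$. I aim to apply $j$ inversions to $T$ so that some set $S$ of $\abs S=j+1$ vertices becomes a \emph{dominated transitive block}, meaning that $S$ induces a transitive tournament and every vertex outside $S$ beats every vertex of $S$. Since $n-(j+1)=m$, the survivors $V\setminus S$ induce an $m$-vertex tournament, whose inversion number is at most $\inv(m)$; crucially, a decycling family of this sub-tournament consists of inversions of subsets of $V\setminus S$, so it leaves $S$ and all edges between $S$ and $V\setminus S$ untouched. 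Stacking these inversions on top of the original $j$ makes the whole tournament transitive (the block sits on top of the sorted survivors), giving $\inv(T)\le j+\inv(m)$.

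The basic move is the standard one recalled in \cref{subsec:upper_bounds}: inverting $\{u\}\cup N^+(u)$ turns $u$ into a sink using one inversion, and no inversion is needed if $u$ is already a sink. The key structural fact I would use is that if at each step we delete a vertex that is a sink of the \emph{current} tournament, then the set of deleted vertices is automatically transitive and dominated by the survivors (this is the content underlying the iterated bound $\inv(n)\le\inv(n-1)+1$, and follows from \cref{obs:atoms} together with the fact that later inversions never touch edges incident to already-deleted vertices). Done naively, building a block of size $j+1$ this way costs $j+1$ inversions, one too many, so the entire content of the step is to secure one deletion \emph{for free}.

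To save this inversion I would choose $v$ to be a vertex of minimum out-degree. As the average out-degree is $(n-1)/2$ and out-degrees are integers, $d^+(v)\le \floor{\frac{n-1}{2}}=j$. First delete the $d^+(v)$ out-neighbours of $v$ one at a time, each made into a sink of the current tournament, using $d^+(v)$ inversions. The point to verify is that none of these inversions contains $v$: each current out-neighbour $x$ being deleted satisfies $v\to x$, so $v\notin\{x\}\cup N^+(x)$; inductively, since no inversion in this phase involves $v$, the edges at $v$ are never reversed, and so each subsequent out-neighbour $x$ still has $v\to x$ at the moment it is deleted. Hence after this phase $v$ still beats exactly $N^+(v)$, which has been entirely removed, so $v$ is now a genuine sink and can be deleted at no cost. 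This removes $d^+(v)+1$ vertices using only $d^+(v)$ inversions. I would then continue deleting sinks (one inversion each) until exactly $m$ vertices remain, for a total of $d^+(v)+(j-d^+(v))=j$ inversions and $j+1$ deleted vertices, exactly as needed.

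The main obstacle is precisely this saving. One must check two things: that clearing out the out-neighbourhood of a minimum-out-degree vertex really does leave that vertex as a sink — which rests entirely on the observation that the inversions used in the clearing phase never involve $v$ — and that $d^+(v)\le j$, so that this cheap phase does not overshoot the target survivor count $m$. Everything else is routine bookkeeping: that successively removing sinks builds a transitive set dominated by the survivors, and that the recursive inversions from the $m$-vertex problem act only within the survivors and hence compose cleanly with the first $j$ inversions.
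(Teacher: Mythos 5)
Your proof is correct and is essentially the paper's own argument: the paper likewise solves the out-neighbourhood $B$ of a chosen vertex $v$ into sinks (noting these inversions never touch $v$, since $A \to v \to B$), thereby gets $v$ placed for free, and recurses on the in-neighbourhood $A$. The only cosmetic differences are that the paper takes an arbitrary $v$ and assumes WLOG $\abs{A}\geq \ceil{(n-1)/2}$ rather than picking $v$ of minimum out-degree, and it absorbs the slack via the inequality $\inv(k)\leq \inv(k-1)+1$ applied to $\inv(\abs{A})$ instead of your explicit extra sink-solves, which amounts to the same thing.
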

\begin{proof}
Let $n\in\N$ and let $T$ be an $n$-vertex tournament. Pick $v\in V(T)$ and write $A$ and $B$ for the in- and out-neighbourhoods of $v$ respectively. We may assume that $\abs{A}\geq \ceil{(n-1)/2}$ (the case where $B$ is the larger of the two is similar). By `solving' each vertex in $B$ one after another, we can find at most $\abs{B}$ inversions which transform $T$ into a tournament $T'$ such that the subtournament of $T'$ induced on $B\cup\{v\}$ is transitive (with $v$ as the minimal element) and every edge of $T'$ between $A$ and $B\cup\{v\}$ is oriented away from $A$. With a further $\inv(A)\leq \inv(\abs{A})$ inversions we can transform $T'$ into a transitive tournament. Thus, $\inv(T)\leq \abs{B}+\inv(\abs{A})$.

We have $\inv(k)\leq \inv(k-1)+1$ for all $k\in\N$ and we can apply this $\abs{A}-\ceil{(n-1)/2}$ times to obtain $\inv(\abs{A})\leq \inv(\ceil{(n-1)/2})+\abs{A}-\ceil{(n-1)/2}$. Using the fact that $\abs{A}+\abs{B}=n-1$, this yields \[\inv(T)\leq \abs{B}+\inv\left(\ceil{\frac{n-1}{2}}\right)+\abs{A}-\ceil{\frac{n-1}{2}}=\floor{\frac{n-1}{2}}+\inv\left(\ceil{\frac{n-1}{2}}\right),\] and the claim follows.
\end{proof}

We can use this result to improve (for large $n$) the upper bound on $\inv(n)$.

\begin{corollary}\label{cor:inv_ub}
For all $n\in\N_0$, $\inv(n)\leq n-\log(n+1)$.
\end{corollary}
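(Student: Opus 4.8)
The plan is to prove the bound by induction on $n$, using \cref{prop:inv_ub} to drive the recursion, and then checking that the floors and ceilings combine to give exactly the logarithmic saving we want. The base case is $n=0$: here $\inv(0)=0$ (the empty digraph is acyclic) and $n-\log(n+1)=0-\log 1=0$, so the bound holds with equality.

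For the inductive step I would fix $n\geq 1$, set $m=\ceil{(n-1)/2}$, and first note that $m<n$ (indeed $m\leq n/2<n$ for $n\geq 1$), so that the induction hypothesis applies to $m$. The key bookkeeping identity is $\floor{(n-1)/2}=n-1-m$, which rewrites the conclusion of \cref{prop:inv_ub} as
\[\inv(n)\leq (n-1-m)+\inv(m).\]
Applying the induction hypothesis $\inv(m)\leq m-\log(m+1)$ then gives
\[\inv(n)\leq (n-1-m)+m-\log(m+1)=n-1-\log(m+1).\]

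It remains to verify that $n-1-\log(m+1)\leq n-\log(n+1)$. After rearranging, this is equivalent to $\log(n+1)\leq 1+\log(m+1)=\log\bigl(2(m+1)\bigr)$, i.e.\ to $n-1\leq 2m$. Since $m=\ceil{(n-1)/2}\geq (n-1)/2$, we have $2m\geq n-1$, so the inequality holds and the induction closes.

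There is no serious obstacle here: the argument is a clean (strong) induction, and the only point requiring any care is the interaction of the floor and ceiling functions. The reason the bound emerges exactly is structural rather than computational — rounding $(n-1)/2$ \emph{up} in the recursive term is precisely what guarantees $2m\geq n-1$, which is what allows the logarithmic gain of $1$ at each level of the recursion to accumulate to the full $\log(n+1)$.
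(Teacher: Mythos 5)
Your proof is correct and is essentially the same as the paper's: both argue by induction on $n$ with base case $n=0$, apply \cref{prop:inv_ub} together with the induction hypothesis at $\ceil{(n-1)/2}$, and close the induction using $\ceil{(n-1)/2}\geq (n-1)/2$, which yields exactly the gain of $1$ in the logarithm. The only difference is presentational: you make the identity $\floor{(n-1)/2}+\ceil{(n-1)/2}=n-1$ explicit, while the paper absorbs it silently into its chain of inequalities.
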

\begin{proof}
We prove the statement by induction on $n$, with the case $n=0$ clear. If $n\geq 1$ and the claim holds for all smaller values, then we have
\begin{equation*}
    \begin{split}
        \inv(n)
        & \leq \floor{\frac{n-1}{2}}+\inv\left(\ceil{\frac{n-1}{2}}\right),\\
        & \leq \floor{\frac{n-1}{2}} + \ceil{\frac{n-1}{2}}-\log\left(\ceil{\frac{n-1}{2}}+1\right), \\
        & \leq n-1-\log\left(\frac{n+1}{2}\right),\\
        & = n-\log(n+1).
    \end{split}
\end{equation*}
\end{proof}

\section{Conclusion}\label{sec:conclusion}

In this paper we have answered several of the questions posed in~\cite{bangjensen2022inversion}. We have shown that their `dijoin conjecture', that $\inv(L\ra R)=\inv(L)+\inv(R)$, is false in general, but have verified it in the case where $\inv(L)=\inv(R)=2$ and have also shown that a $k$-join analogue holds under certain conditions. In addition, we have confirmed their related conjectures that \textsc{$k$-Inversion} is NP-complete for all $k\geq 1$, and that the inequality $\inv(D)\leq 2\tau(D)$ is tight. We have answered their question concerning the minimal $r_k$ such that \textsc{$k$-Tournament-Inversion} can be solved in time $O(\abs{V(T)}^{r_k})$, showing that $r_k=2$ for all $k$, and have improved the lower bound on $\inv(n)$ to show that $\inv(n)=(1+o(1))n$. There are, however, still many interesting open problems in this area. Before discussing some of them, we will touch on two operations similar to inversion.

\subsection{Similar operations}\label{subsec:similar_operations}

We first consider an operation on permutations which is used by molecular biologists as a model for genetic mutations, and could loosely be seen as a vertex analogue of inversions in tournaments. Given a permutation $\pi=(\pi_1\ \pi_2\ \ldots\ \pi_n)$ of $[n]$, for $1\leq i<j\leq n$, the \emph{reversal of the interval $[i,j]$} is the permutation obtained by reversing the order of $\pi_i,\dots,\pi_j$ in $\pi$.  The \emph{reversal distance}, $d(\pi)$, of a permutation $\pi$ is the minimum number of reversals required to transform $\pi$ into the identity permutation. For a survey of reversals and the reversal distance (and many other combinatorial models of genome rearrangements) see~\cite{biobook}. We highlight some results of particular relevance to our work. With regards to computational complexity, Caprara~\cite{caprara} showed that the problem of \textsc{Sorting by Reversals}, that is, determining whether $d(\pi)\leq k$ for inputs of a permutation $\pi$ and $k\in\N$, is NP-complete, while Hannenhalli and Pevzner~\cite{HP96,HP99} showed that it is fixed-parameter tractable when parameterised by~$k$. The natural extremal problem was solved by Bafna and Pevzner~\cite{bafna1996genome} who proved that for a permutation $\pi$ of $[n]$, we have $d(\pi)\leq n-1$ with equality if and only if $\pi\in\{\gamma_n,\gamma_n^{-1}\}$ for an explicit~$\gamma_n$. 

Inversions in digraphs can also be thought of as generalisations of \emph{edge reversals}, i.e.\ the operations which reverse the orientation of a single edge. It is not difficult to see (using an argument from \cref{subsec:intro_tau}) that the minimum number of such operations required to transform a digraph $D$ into an acyclic digraph is equal to $\tau'(D)$, the cycle edge-transversal number of $D$. Determining this quantity is the famous feedback arc set problem, which has been widely studied (see~\cite{kudelic} for an overview). In particular the problem of determining for inputs $D$ and $k$ whether $\tau'(D)\leq k$ was one of the first shown to be NP-complete~\cite{karp} and it remains NP-complete when the input is restricted to tournaments~\cite{alon,charbit}. However, Chen, Liu, Lu, O'Sullivan, and Razgon \cite{chen2008fixed} showed that this problem is again fixed-parameter tractable when parameterised by~$k$. On the extremal side, it was shown by Spencer~\cite{spencer1971optimal, spencer1980optimally} that the maximum cycle edge-transversal number of an $n$-vertex tournament is $\frac{1}{2}\binom{n}{2}-\Theta(n^{3/2})$ and that a random labelled $n$-vertex tournament has this cycle edge-transversal number with probability tending to 1. Bounds of this form remain the best known (see also~\cite{de1983maximum,poljak1988tournament}).

\subsection{Open problems}\label{subsec:open_probs}
We have shown (in \cref{thm:tournament_fpt}) that the problem which takes as inputs a tournament $T$ and an integer $k\in\N$, and asks whether $\inv(T)\leq k$, is fixed-parameter tractable when parameterised by $k$. In keeping with the pattern exhibited in the settings discussed in \cref{subsec:similar_operations}, Bang-Jensen, da Silva, and Havet~\cite{bangjensen2022inversion} conjectured that the full problem is NP-complete.

\begin{conjecture}[\cite{bangjensen2022inversion}]\label{conj:tnmnt_NPcomplete}
The problem of deciding whether $\inv(T)\leq k$ for inputs of $k\in \N$ and a tournament $T$ is NP-complete.
\end{conjecture}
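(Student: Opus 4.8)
The plan is to prove NP-completeness in the two standard parts, with essentially all the work in hardness. Membership in NP is immediate: a decycling family $X_1,\dots,X_k$ of size at most $k$ is a certificate, and since $\inv(T)\leq\abs{V(T)}$ always holds (indeed $\inv(T)\le\abs{V(T)}-\log(\abs{V(T)}+1)$ by \cref{cor:inv_ub}) we may assume $k\leq\abs{V(T)}$, so the certificate has polynomial size; verifying it amounts to performing the inversions and running the $O(\abs{V(T)}^2)$ acyclicity test.

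For hardness, the natural source is the same problem on general oriented graphs with $k$ part of the input, call it \textsc{Digraph-Inversion}: decide whether $\inv(D)\leq k$ for inputs $D$ and $k\in\N$. This is NP-complete, since it is in NP as above and the map $D\mapsto(D,1)$ reduces \textsc{$1$-Inversion}---proved NP-complete in~\cite{bangjensen2022inversion}---to it. Crucially, because \cref{thm:tournament_fpt} solves \textsc{$k$-Tournament-Inversion} in polynomial time for every fixed $k$, any hardness reduction \emph{must} output tournaments in which $k$ grows with the input; this forces us to track the (unbounded) inversion number of $D$ rather than merely test against a constant. I would therefore look for a polynomial-time reduction $(D,k)\mapsto(T,k)$ in which $T$ is a tournament \emph{extension} of $D$ with $\inv(T)=\inv(D)$: since $D$ is then a subdigraph of $T$ we automatically get $\inv(T)\geq\inv(D)$, and the entire reduction amounts to building such an extension.

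The main obstacle is precisely this extension. The final observation of \cref{sec:prelims} already shows that every oriented graph embeds in a tournament of equal inversion number, but its proof inverts a fixed decycling family, completes the resulting acyclic digraph to a transitive tournament, and inverts back---so it presupposes an optimal decycling family of $D$, which is exactly what we are trying to decide. A valid reduction instead needs to orient the non-edges of $D$ (possibly after attaching an auxiliary transitive spine) by a rule \emph{oblivious} to any particular decycling family that nonetheless never forces an extra inversion. The right language is that of \cref{obs:atoms}: a decycling family of size $k$ is the same data as a map $\phi\colon V(D)\to\F_2^k$ for which reversing every edge $\{u,v\}$ with $\phi(u)\cdot\phi(v)=1$ yields an acyclic digraph. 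The aim is to choose orientations of the added edges that are \emph{transparent} to this dot-product condition---ideally always pointing forward in whatever order a decycling $\phi$ induces---so that every $\phi$ decycling $D$ also decycles $T$, and conversely; the hard part is guaranteeing, uniformly in $\phi$, that the completion introduces no new cycle (in particular none of the triangles that power the arguments of \cref{sec:dijoin,sec:tau}) that the existing inversions fail to break.

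If an exactly inversion-preserving extension proves elusive, two relaxations seem worth pursuing. A reduction with $k'=f(k)$ for an explicit increasing $f$ would already suffice, loosening the requirement to controlling $\inv(T)$ within a computable margin; here the additivity of \cref{thm:inv_kjoin} and the decycling-family characterisation of \cref{thm:kjoin_char} suggest assembling $T$ from gadgets whose inversion numbers add, so that the global parameter cleanly tracks a combinatorial quantity of $D$. Alternatively one could reduce from \textsc{Feedback Arc Set in Tournaments}~\cite{alon,charbit}, exploiting $\inv(T)\leq\tau'(T)$; the difficulty there is dual, namely designing gadgets that force the equality $\inv(T)=\tau'(T)$ while keeping the problem hard.
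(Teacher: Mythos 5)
First, a point of order: the statement you were asked to prove is \cref{conj:tnmnt_NPcomplete}, an \emph{open conjecture} restated from~\cite{bangjensen2022inversion}; the paper contains no proof of it, so your proposal has to stand entirely on its own. Your NP-membership argument is fine (a decycling family of size $\min(k,\abs{V(T)})$ is a polynomial certificate, verifiable by performing the inversions and testing acyclicity). The hardness half, however, is not a proof but a programme, and its central step is not merely ``elusive'' --- it is provably impossible unless $\mathrm{P}=\mathrm{NP}$.

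Here is the concrete obstruction. You propose a polynomial-time reduction $(D,k)\mapsto(T,k)$ in which $T$ is a tournament extension of $D$ with $\inv(T)=\inv(D)$ (or even just $\inv(D)\leq k \iff \inv(T)\leq k$). Apply this map with $k=1$: given an instance $D$ of \textsc{$1$-Inversion}, compute $T$ in polynomial time and run the polynomial-time algorithm for \textsc{$1$-Tournament-Inversion} (\cref{thm:tournament_fpt}, or already the $O(n^3)$ algorithm of~\cite{bangjensen2022inversion}) on $T$. This decides \textsc{$1$-Inversion} in polynomial time, and since that problem is NP-complete~\cite{bangjensen2022inversion}, we get $\mathrm{P}=\mathrm{NP}$. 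So no polynomial-time ``oblivious'' inversion-preserving completion can exist (unless $\mathrm{P}=\mathrm{NP}$, in which case the conjecture needs no reduction at all). The same objection eliminates your first relaxation, $k'=f(k)$ for $f$ depending on $k$ alone: the $k=1$ slice of your source problem --- which is where \emph{all} of the known NP-hardness of \textsc{Digraph-Inversion} resides --- would land in the fixed slice $k'=f(1)$, again solvable in polynomial time by \cref{thm:tournament_fpt}. This is precisely the constraint you yourself articulated (``any hardness reduction must output tournaments in which $k$ grows with the input'') and then abandoned: a correct reduction must produce a target parameter growing with $\abs{V(D)}$, for instance in the spirit of the $k$-join constructions behind \cref{thm:inv_kjoin,thm:kjoin_char}, and designing such gadgets whose correctness does not presuppose an optimal decycling family is exactly the open difficulty. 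Your final suggestion (reducing from feedback arc set in tournaments, where the parameter genuinely grows) is the only route not ruled out by this argument, but you give no gadget construction and no mechanism forcing $\inv(T)=\tau'(T)$, so it remains a direction rather than a proof.
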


Note that \cref{thm:tournament_fpt} does not make any progress towards disproving this because the implied constant in the $O(n^2)$ running time is not polynomial in $k$. In fact, as noted above, the constant arising from our algorithm is doubly exponential in $k$. However, again in keeping with both settings discussed in \cref{subsec:similar_operations} (and indeed many natural fixed-parameter tractable problems), we conjecture that this constant can be taken to be singly exponential in $k$, perhaps with a higher power of $n$.

\begin{conjecture}\label{conj:fpt_improved}
 There exist constants $c_1,c_2> 0$ such that \textsc{$k$-Tournament-Inversion} can be solved in time $O(2^{k^{c_1}}\abs{V(T)}^{c_2})$ for any $k\in\N$.
\end{conjecture}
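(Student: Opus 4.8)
The plan is to keep the iterative compression scaffold of our proof of \cref{thm:tournament_fpt} intact, since that already yields the desired polynomial dependence on $\abs{V(T)}$ and reduces the whole question to a single compression routine, namely \cref{lem:tnmnt_inv}. The entire doubly-exponential dependence on $k$ is incurred in that routine, so the task reduces to proving a version of \cref{lem:tnmnt_inv} that runs in time $2^{\mathrm{poly}(k+s)}\cdot n$; since we invoke it with $s=k+2$, this would immediately give the bound in \cref{conj:fpt_improved}.

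Recall the reformulation underlying \cref{lem:tnmnt_inv}: deciding whether $\inv(T)\leq k$ is equivalent to finding a colouring $c\colon V(T)\to\F_2^{k}$ such that the tournament $T^{c}$ obtained from $T$ by reversing the edge $uv$ exactly when $c(u)\cdot c(v)=1$ is transitive, and by \cref{obs:atoms} this holds if and only if $c$ creates no \emph{bad} triple. Processing the vertices in the transitive order supplied by the size-$(k+2)$ family, our current algorithm maintains the set of all achievable triple-profiles; as there are up to $2^{O(k)}$ possible triples of characteristic vectors, this state space has doubly-exponential size. The goal is to replace it by a dynamic-programming state, or a branching scheme, of size $2^{\mathrm{poly}(k)}$.

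The concrete plan is as follows. First I would show that, to extend a prefix correctly, it suffices to remember two things: the set $F\subseteq\F_2^{s+k}$ of colours whose use by a later vertex would complete a bad triple with two earlier vertices, together with the set of colours already used (the latter governing bad triples with two later vertices). For a fixed pair of earlier colours $(\mbf{p},\mbf{q})$ the colours $\mbf{r}$ completing a bad triple form the affine set $\{\mbf{r}:\mbf{q}\cdot\mbf{r}=\mbf{p}\cdot\mbf{q}\text{ and }\mbf{p}\cdot\mbf{r}\neq\mbf{p}\cdot\mbf{q}\}$, so $F$ is always a union of affine pieces drawn from a pool of only $2^{\mathrm{poly}(k)}$ candidates. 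This by itself still permits doubly-exponentially many sets $F$, so the heart of the argument must be a structural theorem, in the spirit of \cref{thm:kjoin_char} but valid for arbitrary tournaments, guaranteeing that only $2^{\mathrm{poly}(k)}$ of these unions actually arise along prefixes that can be completed to a transitive $T^{c}$---ideally because the characteristic vectors realised by any decycling family are controlled by an independent, or orthonormal-type, subset of size $O(k)$.

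The hard part will be exactly this structural theorem. In the $k$-join setting the factor structure forces the characteristic vectors of a decycling family into the rigid orthonormal configuration of \cref{thm:kjoin_char}, and it is this rigidity that keeps the state space small. For a general tournament, however, the vectors realised by a size-$k$ decycling family need be neither orthonormal nor even linearly independent, and I see no direct way to bound the number of distinct prefix-signatures. Taming this combinatorial explosion---equivalently, understanding precisely which configurations of vectors in $\F_2^{k}$ can occur as the characteristic vectors of a size-$k$ decycling family of a tournament---appears to be the main obstacle; a satisfactory answer would presumably also bear on the size of the inversion-critical families $\mathcal{IC}_{k}$.
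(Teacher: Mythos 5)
This statement is \cref{conj:fpt_improved}, which the paper leaves as an \emph{open conjecture}: it has no proof in the paper (indeed, the acknowledgments note it was raised as a question by a referee), so there is nothing to compare your argument against, and the only question is whether your proposal itself constitutes a proof. It does not. You correctly identify where the doubly-exponential dependence on $k$ arises (the state space of triple-profiles in the compression routine, \cref{lem:tnmnt_inv}), and your reformulation of the dynamic-programming state --- the set $F$ of colours that would complete a bad triple with two earlier vertices, together with the colours already used --- is a reasonable compression of the paper's set $\cB$. But as you yourself concede, this state space is still doubly exponential in $s+k$: $F$ ranges over subsets of $\F_2^{s+k}$, so nothing has been gained without the ``structural theorem'' you invoke to prune the states that actually arise along completable prefixes.

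That structural theorem is not a technical step you have deferred; it \emph{is} the conjecture. The rigidity that makes \cref{thm:kjoin_char} work comes entirely from the $k$-join structure (each factor forces an orthonormal vector), and for general tournaments no analogue is known --- the characteristic vectors of a size-$k$ decycling family can be linearly dependent and essentially unconstrained, which is also why so little is known about $\mathcal{IC}_k$ for $k\geq 3$. So your proposal is an honest research plan that reduces \cref{conj:fpt_improved} to an unproven (and, as far as anyone knows, equally hard) combinatorial statement, with no argument offered for that statement. The conjecture remains open after your write-up exactly as it was before it.
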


As discussed in the introduction, the set $\mathcal{IC}_k$ of $k$-inversion-critical tournaments was shown to be finite for all $k$ in~\cite{BELKHECHINE2010703}. They explicitly described $\mathcal{IC}_1$ and $\mathcal{IC}_2$, for the latter using results of Gallai~\cite{Gallai67} (see~\cite{Gallai67Eng} for an English translation) and Latka~\cite{latka03}, but for $k\geq 3$ very little is known about these sets. In particular, it would be interesting to determine $m_k$, the maximum number of vertices in a tournament in $\mathcal{IC}_k$, for $k\geq 3$.

\begin{question}[\cite{bangjensen2022inversion}]\label{q:m_k}
What is the value of $m_k$ for $k\geq 3$?
\end{question}

Finding the minimum possible size of a $k$-inversion-critical tournament is equivalent to the problem of determining $\inv(n)$. The best known bounds on $\inv(n)$ for large $n$ are now
\[ n-\sqrt{2n\log(n)}\leq \inv(n)\leq n-\log(n+1),\]
and it would be interesting to tighten these further.

\begin{question}\label{q:inv(n)}
What is the asymptotic behaviour of $n-\inv(n)$?
\end{question}

In light of our improved lower bound on $\inv(n)$, the lack of an explicit construction for a tournament of large inversion number is even more apparent: no $n$-vertex construction with inversion number more than about $n/3$ (as given by the $(n/3)$-join $[\vv{C_3}]_{n/3}$) is known. 

\begin{problem}\label{prob:construct_inv(n)}
Construct $n$-vertex tournaments with inversion number closer to $\inv(n)$.
\end{problem}

Belkhechine, Bouaziz, Boudabbous, and Pouzet (\cite{BELKHECHINE_unpublished}; see~\cite{bangjensen2022inversion}) defined for each $n\in\N$ a tournament $Q_n$ on vertex set $[n]$ in which for $i<j$ the edge $ij$ is oriented towards~$j$, except if $j=i+1$, in which case it is oriented towards $i$, and conjectured that these graphs satisfy $\inv(Q_n)=\floor{\frac{n-1}{2}}$.

\begin{conjecture}[\cite{BELKHECHINE_unpublished}]\label{conj:inv_lb_example}
For all $n\in \N$ we have $\inv(Q_n)=\floor{\frac{n-1}{2}}$.
\end{conjecture}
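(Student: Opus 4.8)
\textbf{The plan} is to prove the two equalities separately: the lower bound $\inv(Q_n)\geq\floor{\frac{n-1}{2}}$ and the matching upper bound $\inv(Q_n)\leq\floor{\frac{n-1}{2}}$. For the upper bound I would first try to exhibit an explicit decycling family. The tournament $Q_n$ is a `near-transitive' tournament: it agrees with the transitive order $1<2<\dots<n$ on every edge except the consecutive edges $\vv{(i+1)\,i}$. The backward consecutive edges are exactly the obstruction to transitivity, so I would look for a small family of inversions that reverses precisely these edges without introducing new cycles. Using \cref{obs:atoms}, reversing a single edge $\{a,b\}$ amounts to having the characteristic vectors of $a$ and $b$ satisfy $\mbf{a}\cdot\mbf{b}=1$ while all other pairs have dot product $0$. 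Concretely, I would attempt to use the family $X_j=\{2j-1,2j\}$ (or a similar pairing of consecutive vertices) so that each backward edge on a pair is corrected by its own inversion, giving roughly $n/2$ sets; one must then check that no backward edge survives and no forward edge is flipped, which follows from \cref{obs:atoms} once the atoms are computed.

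For the lower bound, which I expect to be \emph{the main obstacle}, I would follow the algebraic/rank-based philosophy used elsewhere in the paper (for instance in the proof of \cref{thm:inv_lower} and in \cref{lem:C3_kjoin}). Suppose $X_1,\dots,X_k$ is a decycling family with $k<\floor{\frac{n-1}{2}}$, and for each vertex $i$ let $\mbf{v}_i\in\F_2^k$ be its characteristic vector. By \cref{obs:atoms}, whether the edge $\{i,j\}$ is reversed is governed by the symmetric bilinear form $\mbf{v}_i\cdot\mbf{v}_j$, and the Gram matrix $G=(\mbf{v}_i\cdot\mbf{v}_j)_{i,j}$ over $\F_2$ has rank at most $k$ since it factors through the $k$-dimensional space $\F_2^k$. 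The resulting tournament is transitive, which is a strong structural constraint: the pattern of reversals must turn $Q_n$ into a transitive tournament, and in particular must reverse exactly the backward consecutive edges relative to \emph{some} transitive order. The plan is to translate `$Q_n$ becomes transitive' into a statement forcing the rank of $G$ (or of a closely related matrix recording the discrepancy between $Q_n$'s orientation and the target transitive order) to be at least $2k+1$ on a suitable $(n-1)$-element or $n$-element set, yielding $k\geq\floor{\frac{n-1}{2}}$.

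The technical heart will be extracting this rank lower bound from the near-transitive structure of $Q_n$. I would examine the subdiagonal of backward edges $\{i,i+1\}$: for the inversions to reverse each such edge we need $\mbf{v}_i\cdot\mbf{v}_{i+1}=1$ for all $i$ (relative to the transitive order on $V(Q_n)$ induced by the decycling), while transitivity forbids the `bad triples' identified in the proof of \cref{lem:tnmnt_inv}, i.e.\ configurations $a<b<c$ with $\mbf{v}_a\cdot\mbf{v}_b=\mbf{v}_b\cdot\mbf{v}_c\neq\mbf{v}_a\cdot\mbf{v}_c$. These two conditions together constrain the vectors $\mbf{v}_1,\dots,\mbf{v}_n$ severely, and I would aim to show they force the $\mbf{v}_i$ to contain a large `orthonormal-like' substructure, much as orthonormal systems arise in \cref{lem:C3_kjoin}. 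An Eventown-type or direct dimension-counting argument (as in the proof of \cref{thm:tau}) should then bound the number of distinct vectors, and hence $n$, from above by an expression in $k$, giving the desired inequality $k\geq\floor{\frac{n-1}{2}}$.

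\textbf{The hardest part} will be pinning down exactly which rank or dimension argument closes the gap cleanly: the constant $\floor{\frac{n-1}{2}}$ is delicate (it is roughly $n/2$, twice as large as the generic $[\vv{C_3}]_{n/3}$ construction gives), so the bound must be essentially tight, meaning I cannot afford to lose constant factors. I anticipate that the parity of the vector length $k$ versus $2k-1$, and the careful alternation of dot products along the consecutive chain, will be where the factor-of-two in $\floor{\frac{n-1}{2}}$ is genuinely won, exactly as the odd-length trick is used in the proof of \cref{thm:tau}.
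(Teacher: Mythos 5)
You have attempted to prove a statement that is, in this paper, an open conjecture: the paper does not prove \cref{conj:inv_lb_example}. It only establishes the upper bound $\inv(Q_n)\leq\floor{\frac{n-1}{2}}$, by exhibiting the decycling family $\{2,3\},\{4,5\},\dots,\{2\floor{(n-1)/2},2\floor{(n-1)/2}+1\}$, and records that the full conjecture is verified only for $n\leq 8$ by computer check. Your upper-bound plan is essentially this construction, but your verification criterion is wrong: you say one must check that ``no backward edge survives and no forward edge is flipped'', i.e.\ that the inversions reverse precisely the $n-1$ backward consecutive edges. That is unachievable with $\floor{\frac{n-1}{2}}$ inversions: if every edge $\{i,i+1\}$ is reversed and no other edge is, then by \cref{obs:atoms} the Gram matrix $G=(\mbf{v}_i\cdot\mbf{v}_j)$ of the characteristic vectors has $G_{i,i+1}=1$ and $G_{i,j}=0$ for $\abs{i-j}\geq 2$; its submatrix on rows $1,\dots,n-1$ and columns $2,\dots,n$ is triangular with unit diagonal, so $\rk(G)\geq n-1$, while $\rk(G)\leq k$ since $G$ factors through $\F_2^k$. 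Hence your own criterion forces $k\geq n-1$. What actually happens with the paper's family is that half of the backward edges \emph{survive}: the resulting tournament is transitive with respect to a different order, namely the natural order with each pair $\{2i,2i+1\}$ transposed. Relatedly, your specific pairing $X_j=\{2j-1,2j\}$ uses $n/2>\floor{\frac{n-1}{2}}$ sets when $n$ is even, so the alignment of the pairs is not a cosmetic choice.

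For the lower bound, which you rightly call the main obstacle, your sketch is not a proof, and the gap it leaves open is exactly why this remains a conjecture. The unresolved difficulty is that a decycling family may transform $Q_n$ into a transitive tournament with an \emph{arbitrary, unknown} vertex order, so any rank argument must show that $M_{Q_n}+M$ has $\F_2$-rank at least roughly $n/2$ for every one of the $n!$ transitive matrices $M\in\cM$ (with free diagonal). The paper's rank machinery (\cref{lem:inv_lower} and the proof of \cref{thm:inv_lower}) cannot do this: it is inherently probabilistic, beating the union bound over all of $\cM$ only because a uniformly random tournament is far from every transitive order with high probability, and it says nothing about one fixed, highly structured tournament such as $Q_n$, which is at Hamming distance only $n-1$ from the identity order. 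The Eventown argument in the proof of \cref{thm:tau} likewise depends on a bespoke construction containing many repeated blocks aligned by pigeonhole, structure that $Q_n$ does not offer relative to an unknown target order. Your proposed starting condition $\mbf{v}_i\cdot\mbf{v}_{i+1}=1$ for all $i$ illustrates the problem: it is not implied by transitivity of the outcome (the optimal families above reverse only half of the consecutive edges), and if it did hold together with your other constraints it would force $k\geq n-1$ by the rank computation above, contradicting the upper bound. Until one finds a way to handle all candidate target orders simultaneously, no argument along these lines closes the bound, and none is known.
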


The conjecture is known to hold for $n\leq 8$ \cite{BELKHECHINE2010703,bangjensen2022inversion}, and it is certainly true that $\inv(Q_n)\leq \floor{\frac{n-1}{2}}$ for all $n$ since the sets \[\{2,3\},\{4,5\},\{6,7\},\dots,\{2\floor{(n-1)/2},2\floor{(n-1)/2}+1\}\] form a decycling family of $Q_n$. 

Defining the \emph{inversion distance}, $\inv(T,T')$, between two labelled tournaments $T$ and $T'$ on the same vertex set to be the minimum number of inversions required to transform $T$ into $T'$, we remark that the matrix rank techniques developed in \cref{subsec:lower_bounds} can be used to show that the maximum inversion distance between two $n$-vertex tournaments is exactly $n-1$. Moreover, combining these ideas with \cref{lem:inv_lower} gives an upper bound of $2^{\binom{n}{2}+n- \binom{s}{2}+s \log(n)}$ on the number of labelled tournaments within inversion distance $n-s$ of a given labelled tournament. 

It is natural in this context to study the random walk $\cW$ on the space of labelled tournaments on $[n]$ where each step in the walk consists of picking a uniform random subset of $[n]$ and inverting that set in the current tournament. In particular, we ask the following.

\begin{question}\label{q:mixing}
What is the mixing time of $\cW$? Does it satisfy the cutoff phenomenon?
\end{question}

Returning to the dijoin conjecture, \cref{thm:2ra2} completes the work of Bang-Jensen, da Silva, and Havet in showing that the conjecture holds in the cases where $\inv(L),\inv(R)\leq 2$. We have also shown (\cref{thm:inv_kjoin}) a $k$-join analogue of the dijoin conjecture for collections of 2-invertible digraphs $D_1,\dots,D_k$ at most one of which has inversion number 2. We conjecture that this final condition can be removed.

\begin{conjecture}\label{conj:kjoin_true}
Let $k\in\N$ and let $D_1,\dots,D_k$ be oriented graphs satisfying $\inv(D_i)\leq 2$ for all~$i$. Then \[\inv([D_1,\dots,D_k])=\sum_{i=1}^k \inv(D_i).\]
\end{conjecture}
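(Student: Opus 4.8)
The plan is to prove the lower bound $\inv([D_1,\dots,D_k])\geq\sum_i\inv(D_i)$ (the reverse inequality being immediate, as in the proof of \cref{thm:inv_kjoin}) by first reducing to strongly connected tournament factors and then establishing a structural characterisation of minimum decycling families generalising \cref{thm:kjoin_char}. First I would extend $[D_1,\dots,D_k]$ to a tournament preserving the inversion number (using the observations in \cref{sec:prelims}) and decompose each factor into its strongly connected components. Since every tournament is the join of its strong components, this rewrites the join as a larger join of strongly connected tournaments, each of inversion number at most $2$; conversely, proving additivity for joins of strongly connected tournaments yields the general statement, because applying it to each $D_i$ separately shows $\inv(D_i)$ equals the sum over its components. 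Discarding the trivial single-vertex ($\inv=0$) components as in the proof of \cref{thm:inv_kjoin}, it therefore suffices to treat $\hD=[C_1,\dots,C_K]$ where each $C_j$ is a strongly connected tournament with $\inv(C_j)\in\{1,2\}$.

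The linear-algebraic skeleton is as follows. Fix a decycling family, let $T$ be the resulting transitive tournament with order $<$, and assign to each vertex its characteristic vector. By \cref{obs:atoms}, for $a$ in $C_i$ and $b$ in $C_j$ with $i<j$ the cross-factor product $\mbf{a}\cdot\mbf{b}$ is forced by the final order: it is $0$ if $a<b$ and $1$ otherwise, since such an edge points out of $a$ in $\hD$ and flips exactly when its endpoints are reversed in $T$. Within a single factor, if the characteristic vectors of $C_j$ spanned a space of dimension at most $1$ then inverting the family would act on $C_j$ as a single inversion, forcing $\inv(C_j)\le1$; hence each factor contributes a subspace $W_j$ with $\dim W_j\ge\inv(C_j)$. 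The bound would follow if the $W_j$ were independent, and the goal of the characterisation is precisely to show that a minimum family forces the $W_j$ to be spanned by mutually orthonormal witnesses, one per unit of inversion number, inducing a minimum decycling of each $C_j$.

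I would prove the bound by induction on the number $m$ of factors with $\inv(C_j)=2$, with base case $m\le1$ supplied by \cref{thm:inv_kjoin}. For the step, replace one $2$-factor $C_{j_0}$ by a spanning subdigraph of inversion number $1$, obtaining a join $\hD'$ with $m-1$ two-factors; any decycling family of $\hD$ also decycles the subdigraph $\hD'$, so by induction it has size at least $N-1$, where $N=\sum_j\inv(C_j)$. If its size were exactly $N-1$ it would be a \emph{minimum} family of $\hD'$, so the inductive characterisation would force the characteristic vectors of factor $j_0$ to span only one dimension; but then inverting the family would decycle the original $C_{j_0}$ using a single set, contradicting $\inv(C_{j_0})=2$. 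Thus every decycling family has size at least $N$, as required.

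The hard part, and the crux on which both this step and the induction rest, is the characterisation of minimum families itself for $2$-factors, which must be re-established at each level and cannot simply be inherited from the induction hypothesis (a minimum family of $\hD$ is not minimum for the reduced join $\hD'$). In \cref{lem:C3_kjoin} each $\vv{C_3}$ factor carried a single distinguished source vertex, and a reverse induction peeling the order-minimal such vertex produced an orthonormal basis together with the forbidden-cycle arguments forcing every vertex into $\{\mbf{0},\mbf{u}_i\}$. For a strongly connected $2$-factor the two dimensions it must contribute are \emph{not} witnessed by any join-substructure, so one cannot simply locate a copy of $[\vv{C_3}]_N$ inside $\hD$ and invoke \cref{thm:kjoin_char}; instead one must canonically select two orthonormal witness vectors inside each $2$-factor and rule out, via cross-factor forbidden-cycle arguments, any linear dependence between the witnesses of different factors. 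Developing this two-dimensional analogue of the peeling argument — equivalently, showing that the subspaces $W_j$ are independent and that within each factor all characteristic vectors lie in $W_j$ — is where I expect essentially all of the difficulty to lie.
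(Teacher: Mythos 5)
First, be aware that the statement you were given is \cref{conj:kjoin_true}: the paper poses it as an open problem and does not prove it. The paper establishes only the special cases in \cref{thm:inv_kjoin} (at most one factor of inversion number $2$) and \cref{thm:2ra2} ($k=2$), so the only question is whether your argument closes the gap, and it does not. Your induction on the number $m$ of factors with inversion number $2$ takes as inductive hypothesis the numerical bound $\inv(\hD')\geq N-1$, but the induction step invokes something strictly stronger: a structural characterisation of the \emph{minimum} decycling families of $\hD'$, which for $m\geq 2$ is a join still containing $m-1\geq 1$ factors of inversion number $2$. No such characterisation exists in the paper or in your proposal: \cref{thm:kjoin_char} applies only when \emph{every} factor has inversion number $1$, \cref{thm:inv_kjoin} gives the inversion number but says nothing about the structure of optimal families, and your inductive hypothesis is only a cardinality statement. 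Thus already the first nontrivial step, $m=2$, rests on an unproven lemma. You concede this yourself (the characterisation ``must be re-established at each level''), and you sketch what it should say (independence of the subspaces $W_j$, orthonormal witnesses inducing minimum decyclings of each factor), but you never prove it; since that lemma implies the conjecture by your own argument, the proposal in effect restates the difficulty rather than resolving it.

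There is also a concrete error in your preliminary reduction. You extend $[D_1,\dots,D_k]$ to a tournament $[T_1,\dots,T_k]$ for which the given decycling family still works, split each $T_i$ into strong components, and assert that the resulting strongly connected factors have inversion number at most $2$. That does not follow: only $\inv(D_i)\leq 2$ is known, while $T_i$ is dictated by the decycling family under analysis (observation (iii) in \cref{sec:prelims}) and can have inversion number as large as that of the whole tournament, so its strong components can have inversion number $3$ or more. You cannot repair this by passing to spanning subdigraphs of those components with inversion number $2$, because that destroys strong connectivity, which is exactly the property your planned ``two-dimensional peeling'' argument requires; nor can you instead assume additivity for joins of strongly connected tournaments of \emph{arbitrary} inversion number, since \cref{thm:dijoin_ce} and \cref{conj:dijoin_false} indicate that additivity genuinely fails once a factor of inversion number at least $3$ is allowed. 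Both the reduction and the central characterisation lemma would need to be supplied before this outline becomes a proof.
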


On the other hand, \cref{thm:dijoin_ce} gives a counterexample to the dijoin conjecture where $\inv(L)=1$ and $\inv(R)=3$. From this, we can obtain counterexamples with $\inv(L)=k$ and $\inv(R)=3$ for any $k\in\N$: let $L=[\vv{C_3}]_k$ and let $R$ be as in the proof of \cref{thm:dijoin_ce}. The tournaments obtained from these by inverting the whole vertex set give counterexamples in which $\inv(L)=3$ and $\inv(R)=k$. We conjecture that here $3$ can be replaced with any larger integer, or in other words that the only values of $\inv(L)$ and $\inv(R)$ for which the dijoin conjecture always holds are those where $\inv(L),\inv(R)\leq 2$ or where one of $\inv(L)$ or $\inv(R)$ is $0$.

\begin{conjecture}\label{conj:dijoin_false}
For all $\ell,r\in\N$ with $\ell\geq 3$ or $r\geq 3$ there exist oriented graphs $L$ and $R$ with $\inv(L)=\ell$ and $\inv(R)=r$, but $\inv(L\ra R)<\ell+r$.
\end{conjecture}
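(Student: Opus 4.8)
The plan is to reduce the conjecture to a single parametrised construction and then isolate the obstruction. Two easy moves let us roam over the pairs $(\ell,r)$. First, inverting the whole vertex set of $L\ra R$ reverses every edge and turns it into $\overline{R}\ra\overline{L}$, where $\overline{D}$ is $D$ with all edges reversed; since $\inv(\overline{D})=\inv(D)$, a counterexample for $(\ell,r)$ gives one for $(r,\ell)$, so we may assume $r\geq 3$. Second, if some oriented graph $R$ with $\inv(R)=r$ satisfies $\inv(\vv{C_3}\ra R)\leq r$, then repeatedly applying the trivial bound $\inv(\vv{C_3}\ra D)\leq 1+\inv(D)$ to $[\vv{C_3}]_{c+1}\ra R=[\vv{C_3}]_c\ra(\vv{C_3}\ra R)$ yields $\inv([\vv{C_3}]_{c+1}\ra R)\leq c+r$ for all $c\geq 0$. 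As $\inv([\vv{C_3}]_{c+1})=c+1$ by \cite{pouzet} (or \cref{thm:inv_kjoin}), this is a counterexample for $(c+1,r)$. Ranging over $c\geq 0$ and combining with the first move, it therefore suffices to find, for each $r\geq 3$, one oriented graph $R$ with $\inv(R)=r$ and $\inv(\vv{C_3}\ra R)\leq r$.

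I would construct such an $R$ using the sharing mechanism of \cref{thm:dijoin_ce}, recast through characteristic vectors. Fix a decycling family $Z_1,\dots,Z_r$ of $R$, let $\mbf s\in\F_2^r$ be the indicator vector of a set $S\subseteq[r]$, attach a copy of $\vv{C_3}$ with vertices $x,y,z$, and consider the inversions $W_i=Z_i\cup\{x,y\}$ for $i\in S$ and $W_i=Z_i$ otherwise. Then $x$ and $y$ have characteristic vector $\mbf s$, the vertex $z$ has $\mbf 0$, and each $w\in R$ keeps its old characteristic vector $\mbf w$. By \cref{obs:atoms} the inversions act on $R$ exactly as $Z_1,\dots,Z_r$ do, the edge $xy$ reverses iff $\mbf s\cdot\mbf s=1$, the edges $xz,yz$ are unchanged, and a connecting edge from $\vv{C_3}$ to $w$ is unchanged iff $\mbf s\cdot\mbf w=0$. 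Hence, if $\mbf s$ has odd weight and is orthogonal to the characteristic vector of every vertex of $R$, the family $W_1,\dots,W_r$ decycles $\vv{C_3}\ra R$. So the whole conjecture reduces to the following: for each $r\geq 3$, exhibit a tournament with inversion number $r$ admitting a size-$r$ decycling family all of whose vertex characteristic vectors lie in $\mbf s^\perp$ for some odd-weight $\mbf s\in\F_2^r$.

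When $r$ is odd one may take $\mbf s=\mbf 1$, so the task becomes finding a minimum decycling family in which every vertex lies in an even number of the $Z_i$. The tournament of \cref{thm:dijoin_ce} realises the case $r=3$: with the decycling family $A\cup B,\,A\cup C,\,B\cup C$ every vertex lies in an even number of the three sets, so $\mbf s=\mbf 1$ works. I would seek an explicit generalisation realising every odd $r$; this is essentially the regime already handled in the independent work \cite{aubian}.

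The genuine difficulty is even $r$: now $\mbf 1$ has even weight, so one must use an odd-weight $\mbf s$ of weight between $3$ and $r-1$, forcing the indicator vectors of the decycling family to contain an odd-size subfamily summing to $\mbf 0$ over $\F_2$ while the family stays minimum. This rigidity defeats the obvious attempts, because building $R$ as a join of smaller pieces — for instance joining the tournament of \cref{thm:dijoin_ce} with a $1$-invertible digraph — tends to lower the inversion number below $r$ for exactly the dijoin reason we are exploiting. I expect the crux of the proof to be constructing, for every even $r$, an oriented graph of inversion number exactly $r$ together with a minimum decycling family of this even type, which looks to require a new gadget, plausibly one coded by a binary linear code carrying a prescribed isotropic vector. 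A parallel route is to attach a $2$-invertible digraph instead of $\vv{C_3}$ and share a hyperbolic pair $\mbf s_1,\mbf s_2$ (with $\mbf s_1\cdot\mbf s_1=\mbf s_2\cdot\mbf s_2=0$ and $\mbf s_1\cdot\mbf s_2=1$) orthogonal to every characteristic vector of $R$; such even-weight vectors are plentiful when $r$ is even, and the difficulty is then transferred to producing a $2$-invertible digraph decycled by this even-type action.
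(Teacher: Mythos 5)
You are attempting to prove \cref{conj:dijoin_false}, which the paper states as a \emph{conjecture} and does not prove. The paper's entire treatment of it consists of (i) the counterexamples for $(\ell,3)$ and $(3,r)$ obtained from \cref{thm:dijoin_ce} by prepending $[\vv{C_3}]_k$ and complementing, (ii) the observation that the conjecture is equivalent to exhibiting, for every $r\geq 3$, a tournament $R$ with $\inv(R)=\inv(\vv{C_3}\ra R)=r$, and (iii) a pointer to the independent work \cite{aubian}, which resolves it only when at least one of $\ell,r$ is odd and at least $3$. Your two reduction moves are correct and match the paper's: complementation swaps $(\ell,r)$ with $(r,\ell)$, and writing $[\vv{C_3}]_{c+1}\ra R=[\vv{C_3}]_c\ra(\vv{C_3}\ra R)$ together with $\inv([\vv{C_3}]_{c+1})=c+1$ propagates a single counterexample $(1,r)$ to all $(\ell,r)$. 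Your characteristic-vector mechanism via \cref{obs:atoms} is also sound, and for $r=3$ with $\mbf{s}=\mbf{1}$ it reproduces exactly the decycling family $A\cup B\cup\{u,v\}$, $A\cup C\cup\{u,v\}$, $B\cup C\cup\{u,v\}$ used in the paper's proof of \cref{thm:dijoin_ce}.

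However, as a proof the proposal has a genuine gap, which you acknowledge yourself: the constructions that would settle the conjecture are never produced. For odd $r\geq 5$ you ``would seek an explicit generalisation'' and defer to \cite{aubian}; deferring to external work is not a proof (though there the required tournaments do exist). For even $r$ you state outright that a new gadget is needed, and this is precisely the regime that remains open --- it is not covered by \cite{aubian} either. Your fallback route for even $r$ (attaching a $2$-invertible digraph and sharing a hyperbolic pair $\mbf{s}_1,\mbf{s}_2$) would at best yield counterexamples with $\ell=2$; prepending $[\vv{C_3}]_c$ and invoking \cref{thm:inv_kjoin} then covers all $\ell\geq 2$, but the pairs $(1,r)$ and $(r,1)$ with $r$ even would still be untouched, so even a successful execution of that route would not close the conjecture. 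In short: your reduction is correct and identical in substance to the paper's own remarks, but the statement remains a conjecture, and nothing in the proposal bridges the gap from $r=3$ to general $r$.
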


This conjecture is equivalent to the claim that for all $r\geq 3$ there exists a tournament~$R$ with $\inv(R)=\inv(\vv{C_3}\ra R)=r$. To see that this follows from the conjecture, note that for $r\geq 3$, if $\inv(L)=1$ and $\inv(R)=\inv(L\ra R)=r$, then we can extend $L\ra R$ to a tournament $T=L' \ra R'$ with inversion number $r$. Clearly $\inv(R')=r$ and $\inv(L')\geq 1$, so $L'$ contains a copy of $\vv{C_3}$. Thus, $r=\inv(R')\leq\inv(\vv{C_3}\ra R')\leq \inv(T)=r$, as required. The converse follows from the arguments of the previous paragraph.

Finally, we noted in \cref{subsec:intro_complexity} that $\inv(D)\leq \inv(D-\{v\})+2$ for all digraphs~$D$ and vertices $v\in V(D)$. It is certainly the case that this inequality is tight for some $D$ and $v$. Indeed, a reformulation of \cref{thm:tau} yields the stronger statement that for all $k\in\N$ there exists a digraph $D$ and a set $S\subseteq V(D)$ with $\abs{S}=k$ such that for all $T\subseteq S$ we have $\inv(D-T)=\inv(D)-2\abs{T}$. We conjecture, however, that the inequality $\inv(D)\leq \inv(D-\{v\})+2$ cannot be tight for all vertices $v$ in a given digraph $D$.

\begin{conjecture}\label{conj:vertex_ineq}
Let $D$ be a digraph with at least one vertex. Then there exists $v\in V(D)$ such that $\inv(D-\{v\})\geq \inv(D)-1$.
\end{conjecture}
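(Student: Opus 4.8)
The statement is a conjecture, so I describe a plan of attack rather than a complete argument. First I would reduce to the inversion-critical case: if $D$ is not inversion-critical then some vertex $v$ has $\inv(D-\{v\})=\inv(D)$, which already satisfies the conclusion. So assume $\inv(D)=k$ and, for a contradiction, that $\inv(D-\{v\})\le k-2$ for every $v$. Since the standard reinsertion bound gives $\inv(D)\le\inv(D-\{v\})+2$, we in fact have $\inv(D-\{v\})=k-2$ for all $v$; thus a counterexample is precisely a digraph in which deleting any single vertex drops the inversion number by exactly $2$. The goal is to show no such digraph exists by producing, for some vertex, a decycling family of $D$ of size at most $k-1$.

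The mechanism I would exploit is that the reinsertion bound is sometimes wasteful. Take any $v$ and an optimal decycling $Y_1,\dots,Y_{k-2}$ of $D-\{v\}$; since $v$ lies in none of the $Y_i$, inverting them turns $D$ into a digraph $D^\ast$ in which only edges inside $V\setminus\{v\}$ have been reversed and $D^\ast-\{v\}$ is acyclic. If the out-neighbourhood $N^+(v)$ happens to be an independent set, then inverting the single set $\{v\}\cup N^+(v)$ reverses exactly the out-edges of $v$ (there are no internal edges of $N^+(v)$ to disturb), making $v$ a sink and leaving $D^\ast-\{v\}$ untouched; hence $\inv(D)\le(k-2)+1=k-1$, a contradiction. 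The same works if $N^-(v)$ is independent. Since independence is orientation-free while inverting the $Y_i$ can rearrange the edges \emph{inside} $N^+(v)$, the real sufficient condition is that $v$ and a decycling of $D-\{v\}$ can be chosen so that reversing $D^\ast[N^+(v)]$ (or $D^\ast[N^-(v)]$) keeps the digraph acyclic. I would first verify that this is met whenever $D$ has a vertex of small out- or in-degree, a sink, a source, or more generally a vertex whose neighbourhood induces a reversible acyclic subdigraph, which already covers directed cycles and many sparse digraphs.

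The principal obstacle is the regular-tournament-like regime: a regular tournament on $5$ vertices, say, has no vertex whose in- or out-neighbourhood is independent, so the clean special case fails and one is forced to use the freedom in choosing the size-$(k-2)$ decycling of $D-\{v\}$ to simplify $D^\ast[N^+(v)]$. Making this precise is where the difficulty lies, since there is no monotonicity relating an optimal decycling of $D$ to one of $D-\{v\}$, and the only universal handle---the $+2$ reinsertion bound---is exactly the quantity we must beat. To push past this I would try a vertex-minimal-counterexample argument: then $\inv(D-\{v\})=k-2$, and applying the conjecture inductively to $D-\{v\}$ yields $w$ with $\inv(D-\{v,w\})\ge k-3$, after which the plan is to weld optimal decyclings of the $(k-2)$-vertex-deleted graphs $D-\{v\}$ and $D-\{w\}$ into a decycling of $D$ of size at most $k-1$. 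Alternatively, one can pass to a tournament extension and use the $\F_2$ symmetric-matrix picture of \cref{subsec:lower_bounds}, in which $\inv$ is the least number of rank-one symmetric matrices whose sum carries $M_T$ to a transitive matrix; deleting $v$ is deleting a row and column, and the crux is to control how this interacts with the minimum-rank transitive completion. I expect reconciling the freedom in the transitive target with the effect of a principal-submatrix deletion to be the technical heart of any proof.
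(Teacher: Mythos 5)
This statement is \cref{conj:vertex_ineq}, which the paper poses as an open problem in \cref{sec:conclusion} and does not prove, so there is no paper proof to compare against; you correctly treated it as a conjecture and offered a plan rather than an argument. The parts of your plan that you do carry out are sound: if $D$ is not inversion-critical the conclusion is immediate; in a counterexample every vertex must satisfy $\inv(D-\{v\})=\inv(D)-2$ by the reinsertion bound; and if some vertex $v$ has an independent out-neighbourhood (or in-neighbourhood), then inverting an optimal decycling family of $D-\{v\}$ followed by the single set $\{v\}\cup N^+(v)$ reverses exactly the edges at $v$ (independence is preserved since inversions never change the underlying graph), making $v$ a sink whose deletion leaves an acyclic digraph, so $\inv(D)\leq\inv(D-\{v\})+1$, a contradiction.

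The proposal stops exactly where the difficulty begins, and two of the suggested continuations have concrete flaws. First, the welding step in the minimal-counterexample argument has no mechanism behind it: a decycling family of $D-\{v\}$ carries no information about edges incident to $v$, and one of $D-\{w\}$ none about edges at $w$, so nothing in the proposal combines two families of size $k-2$ into a family of size $k-1$ for $D$; this is precisely the non-monotonicity obstruction you yourself flag, restated rather than overcome. Second, the move of passing to a tournament extension goes the wrong way for this particular conjecture: if $T$ extends $D$ with $\inv(T)=\inv(D)$ (observation (iii) of \cref{sec:prelims}), then finding $v$ with $\inv(T-\{v\})\geq\inv(T)-1$ does not give the conclusion for $D$, because $D-\{v\}$ is merely a subdigraph of $T-\{v\}$ and hence $\inv(D-\{v\})\leq\inv(T-\{v\})$ --- the inequality points in the unhelpful direction, so the $\F_2$ matrix picture of \cref{subsec:lower_bounds} would at best address the tournament case. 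Moreover, for tournaments your independent-neighbourhood case is vacuous whenever every vertex has in- and out-degree at least $2$, which is exactly the regime you identify as hard. The conjecture remains open, and a correct solution will need an idea not present in the proposal.
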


\section*{Acknowledgments}
We would like to thank the anonymous referees whose suggestions improved the presentation and clarity of our arguments. We would also particularly like to thank one referee for raising \cref{conj:fpt_improved} as a question.

\end{document}